\documentclass[reqno,11pt]{amsart}
\usepackage{setspace}
\makeatletter

\usepackage[T1]{fontenc}
\usepackage{lmodern}

\usepackage{fullpage}
\usepackage{textcmds}  
\usepackage{amsmath, amssymb, amsfonts, amstext, verbatim, amsthm, mathrsfs, stmaryrd}
\usepackage{thmtools}
\usepackage{appendix}
\usepackage{mathdots}
\usepackage{ytableau}
\usepackage[all,cmtip,2cell]{xy}
\usepackage{pgf,tikz,pgfplots}
\pgfplotsset{compat=1.15}
\usetikzlibrary{arrows}
\usetikzlibrary{positioning}
\usetikzlibrary{quotes}
\usepackage{tikz-cd}
\usepackage{quiver}
\usepackage{mathtools}
\usepackage[dvipsnames]{xcolor}

\usepackage{graphicx}
\graphicspath{ {images/} }

\tikzset{
  symbol/.style={
    draw=none,
    every to/.append style={
      edge node={node [sloped, allow upside down, auto=false]{$#1$}}}
  }
}

\usepackage{enumerate}
\usepackage{enumitem}
\setlist{topsep=1em, itemsep=1em}
\usepackage[colorlinks=true,linkcolor=blue,citecolor=blue,urlcolor=blue,citebordercolor={0 0 1},urlbordercolor={0 0 1},linkbordercolor={0 0 1}]{hyperref} 
\usepackage[shortalphabetic]{amsrefs}
\usepackage[nameinlink, capitalize]{cleveref}

\usepackage{enotez}
\setenotez{backref=true}

\def\mydefc#1{\expandafter\def\csname c#1\endcsname{\mathcal{#1}}}
\def\mydefallc#1{\ifx#1\mydefallc\else\mydefc#1\expandafter\mydefallc\fi}
\mydefallc ABCDEFGHIJKLMNOPQRSTUVWXYZ\mydefallc

\def\mydefb#1{\expandafter\def\csname b#1\endcsname{\mathbf{#1}}}
\def\mydefallb#1{\ifx#1\mydefallb\else\mydefb#1\expandafter\mydefallb\fi}
\mydefallb ABCDEFGHIJKLMNOPQRSTUVWXYZ\mydefallb

\def\mydeffrac#1{\expandafter\def\csname frac#1\endcsname{\mathscr{#1}}}
\def\mydeffracall#1{\ifx#1\mydeffracall\else\mydeffrac#1\expandafter\mydeffracall\fi}
\mydeffracall ABCDEFGHIJKLMNOPQRSTUVWXYZ\mydeffracall

\theoremstyle{plain}
\newtheorem{thm}{Theorem}[section]
\newtheorem{cor}[thm]{Corollary}
\newtheorem{lem}[thm]{Lemma}

\newtheorem{prop}[thm]{Proposition}
\newtheorem{quest}[thm]{Question}

\theoremstyle{definition}
\newtheorem{rem}[thm]{Remark}
\newtheorem{defn}[thm]{Definition}

\newtheorem{ex}[thm]{Example}

\newtheorem{introthm}{Theorem}

\crefname{thm}{theorem}{theorems}
\crefname{cor}{corollary}{corollaries}
\crefname{lem}{lemma}{lemmas}
\crefname{prop}{proposition}{propositions}
\crefname{defn}{definition}{definitions}
\crefname{rem}{remark}{remarks}
\crefname{claim}{claim}{claims}
\crefname{conj}{conjecture}{conjectures}
\crefname{ex}{example}{examples}
\crefname{fact}{fact}{facts}
\crefname{quest}{question}{questions}

\Crefname{thm}{Theorem}{Theorems}
\Crefname{cor}{Corollary}{Corollaries}
\Crefname{lem}{Lemma}{Lemmas}
\Crefname{prop}{Proposition}{Propositions}
\Crefname{defn}{Definition}{Definitions}
\Crefname{rem}{Remark}{Remarks}
\Crefname{claim}{Claim}{Claims}
\Crefname{conj}{Conjecture}{Conjectures}
\Crefname{ex}{Example}{Examples}
\Crefname{fact}{Fact}{Facts}
\Crefname{quest}{Question}{Questions}

\def\rm{\mathrm}
\def\d{\mathrm{d}}

\DeclareMathOperator{\supp}{supp}

\newcommand{\DCoh}{\mathcal{D}^{b}}

\DeclareMathOperator{\Cone}{Cone}

\DeclareMathOperator{\ev}{ev}

\DeclareMathOperator{\Hom}{Hom}

\newcommand{\pt}{\mathrm{pt}}

\DeclareMathOperator{\Stab}{Stab}
\DeclareMathOperator{\Sym}{Sym}

\DeclareMathOperator{\Pic}{Pic}

\DeclareMathOperator{\fib}{fib}
\DeclareMathOperator{\im}{im}
\DeclareMathOperator{\HN}{HN}

\DeclareMathOperator{\coker}{coker}

\DeclareMathOperator{\Slice}{Slice}

\DeclareMathOperator{\rep}{rep}

\DeclareMathOperator{\logZ}{logZ}
\DeclareMathOperator{\gr}{gr}

\def\bf{\mathbf}

\def\Astab{\mathcal{A}\Stab}

\usepackage{pbox}
\usepackage[normalem]{ulem}

\makeatletter

\usepackage{babel}
\begin{document}

\title[Properties of deformed mass and phase functions]{Properties of deformed mass and phase functions}

\author[D. Halpern-Leistner]{Daniel Halpern-Leistner}
\address{Department of Mathematics, Cornell University, Ithaca, NY}
\email{daniel.hl@cornell.edu}

\author[A. Robotis]{Antonios-Alexandros Robotis}
\address{Department of Mathematics, Columbia University, New York, NY}
\email{a.robotis@columbia.edu}

\begin{abstract}
    We establish basic properties of the deformed mass and phase functions on the space of stability conditions. We prove that these functions are continuous and deduce that the space of stability conditions admits a homeomorphic embedding into a product space of finite measures. Sub\-sequently, we give a proof of the triangle inequality for deformed mass functions and provide estimates for the deformed mass of truncations of objects with respect to a slicing.
\end{abstract}

\maketitle

\tableofcontents

\addtocontents{toc}{\protect\setcounter{tocdepth}{1}}

\linespread{1.15}\selectfont

\section{Introduction}

Since their introduction by Bridgeland \cite{Br07} over twenty years ago, stability conditions on triang\-ulated categories have received considerable attention in a number of contexts, including algebraic geometry, representation theory, and mathematical physics. Remarkably, the set of stability conditions on a triangulated category $\cD$, denoted $\Stab(\cD)$, has a canonical metric topology. Further\-more, subject to finiteness hypotheses it admits a local homeomorphism to a complex vector space, giving it a canonical complex manifold structure.

Whenever it is non-empty, the space of stability conditions is not compact. In the concomitant work \cite{augmented}, we introduce a partial compactification $\Astab(\cD)$ of the quotient space $\Stab(\cD)/\bf{C}$ whose boundary points parametrize new categorical structures on $\cD$, called \emph{augmented stability conditions}. The present work, which can be read independently of \cite{augmented}, establishes basic prop\-erties of certain functions defined on $\Stab(\cD)$, called \emph{deformed mass} and \emph{deformed phase} functions --- see \Cref{S:deformed_mass_and_phase} for definitions. These functions are used crucially in \cite{augmented} to define convergence in the space of augmented stability conditions. 

Deformed mass functions were introduced in work of  Dimitrov--Haiden--Katzarkov--Kontsevich \cite{DynamicsDHKK} on categorical dynamics and the properties of these functions were further developed by Ikeda \cite{ikeda_2021}. On the other hand, we introduce deformed phase as an extension of the average phase functions studied in \cite{quasiconvergence}. We next summarize the contents of the present work. In this introduction, we assume that all stability conditions satisfy the support property of \cite{KS08}, however this hypothesis can be weakened in certain places --- see \Cref{D:stabilitycondition} and \Cref{R:weakenedhypothesis}.

Given a (pre-)stability condition $\sigma$ on a triangulated category $\cD$, we assign to each non-zero object of $\cD$ a measure $\d m_{\sigma,E}$ on $\bf{R}$, called the associated \emph{mass measure} --- see \Cref{D:massmeasures}. This is a finitely supported measure which records the phases and masses of the Harder--Narasimhan factors of $E$ with respect to $\sigma$. It is natural to ask what functions on $\Stab(\cD)$ can be defined by integration against these measures: both the deformed mass and phase functions are of this type.

\begin{introthm}
[= \Cref{T:jointlycontinuous}] 
\label{T:introA}
     Given a non-empty subspace $\Omega \subset \bf{R}$ and $F\in \mathscr{C}^0(\Omega\times \bf{R})$, then for all non-zero objects $E$ of $\cD$, the function $\Omega \times \Stab(\cD)\to \bf{R}$ defined by 
     \[
        (s,\sigma)\mapsto \int_{\bf{R}} F(s,\theta)\:\d m_{\sigma,E}(\theta)
     \]
     is continuous.
\end{introthm}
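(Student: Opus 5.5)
Fix a non-zero object $E$ and a point $(s_0,\sigma_0)\in \Omega\times\Stab(\cD)$. The mass measure is $\d m_{\sigma,E}=\sum_i m_{\sigma}(A_i)\,\delta_{\phi_\sigma(A_i)}$, where the sum runs over the Harder--Narasimhan factors $A_i$ of $E$ with respect to $\sigma$. The plan is to show that, as $\sigma\to\sigma_0$, these measures converge weakly. More precisely, we will show they converge in the sense relevant here: their total masses converge and they are supported in a fixed compact set. Joint continuity in $(s,\sigma)$ then follows from the uniform continuity of $F$ on a compact set.

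\textbf{Step 1 (supports).} If $d(\sigma,\sigma_0)<\epsilon$, the slicings $\cP_\sigma$ and $\cP_{\sigma_0}$ are $\epsilon$-close, so $\phi^\pm_\sigma(E)$ lies within $\epsilon$ of $\phi^\pm_{\sigma_0}(E)$. Hence all $\d m_{\sigma,E}$ with $\sigma$ near $\sigma_0$ are supported in the fixed compact interval $I=[\phi^-_{\sigma_0}(E)-1,\phi^+_{\sigma_0}(E)+1]$. Choose a compact neighborhood $K$ of $s_0$ in $\Omega$. Then $F$ is uniformly continuous on $K\times I$. By the triangle inequality it suffices to prove two things. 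First, $\int F(s_0,\cdot)\,\d m_{\sigma,E}\to \int F(s_0,\cdot)\,\d m_{\sigma_0,E}$. Second, the total mass $m_\sigma(E)$ stays bounded near $\sigma_0$. Given boundedness, the error from replacing $s$ by $s_0$ is at most $\sup_{I}|F(s,\cdot)-F(s_0,\cdot)|\cdot m_\sigma(E)$, which tends to $0$.

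\textbf{Step 2 (localized masses).} This is the main step. Write $\phi_1>\dots>\phi_n$ for the HN phases of $E$ under $\sigma_0$, with factors $A_j$. Take $\epsilon$ much smaller than the gaps $\phi_j-\phi_{j+1}$, and let $\sigma$ be $\epsilon$-close to $\sigma_0$. Each $A_j$ lies in $\cP_\sigma(\phi_j-\epsilon,\phi_j+\epsilon)$. The windows around different $\phi_j$ are disjoint and ordered, so concatenating the $\sigma$-HN filtrations of the $A_j$ refines the $\sigma_0$-HN filtration of $E$ into a $\sigma$-HN filtration of $E$. Therefore $\d m_{\sigma,E}=\sum_j \d m_{\sigma,A_j}$, and $\d m_{\sigma,A_j}$ is supported in $(\phi_j-\epsilon,\phi_j+\epsilon)$.

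It remains to show that $m_\sigma(A_j)\to m_{\sigma_0}(A_j)=|Z_0(A_j)|$. The lower bound $m_\sigma(A_j)\ge |Z_\sigma(A_j)|\to|Z_0(A_j)|$ is immediate. For the upper bound, all $\sigma$-HN factors $B$ of $A_j$ have phases in a window of width $2\epsilon$, so their central charges $Z_\sigma(B)$ lie in a narrow sector. This gives
\[
m_\sigma(A_j)=\sum|Z_\sigma(B)|\le \frac{|Z_\sigma(A_j)|}{\cos(\pi\epsilon)}.
\]
This is the standard estimate underlying Bridgeland's deformation argument. Both bounds tend to $|Z_0(A_j)|$, which also yields the boundedness needed in Step 1. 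Note that this step uses only the slicing distance and the convergence of central charges, so it does not need the support property.

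\textbf{Step 3 (conclusion).} Use the decomposition from Step 2:
\[
\Bigl|\int F(s_0,\cdot)\,\d m_{\sigma,E}-\sum_j F(s_0,\phi_j)|Z_0(A_j)|\Bigr|\le \sum_j \Bigl(\omega(\epsilon)\, m_\sigma(A_j)+|F(s_0,\phi_j)|\,\bigl|m_\sigma(A_j)-|Z_0(A_j)|\bigr|\Bigr),
\]
where $\omega$ is the modulus of continuity of $F(s_0,\cdot)$ on $I$. The right-hand side tends to $0$ as $\sigma\to\sigma_0$, and the subtracted sum is exactly $\int F(s_0,\cdot)\,\d m_{\sigma_0,E}$. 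This proves the first claim of Step 1 and hence joint continuity.

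The expected obstacle is the refinement claim in Step 2: the concatenated filtration must be a genuine $\sigma$-HN filtration, and this requires the choice of $\epsilon$ below half the minimal phase gap. The sector estimate is routine by comparison.
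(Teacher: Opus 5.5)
Your argument is correct and is essentially the paper's proof: for $\epsilon$ below half the minimal $\sigma_0$-phase gap you split $\d m_{\sigma,E}$ as $\sum_j \d m_{\sigma,A_j}$, control each semistable piece with the sector estimate (the paper's \Cref{L:conevector}), and then separate off the $s$-variation using a fixed compact set that contains all the supports. One small repair is needed: $\Omega$ need not be locally compact (e.g.\ $\Omega=\mathbf{Q}$), so instead of choosing a compact neighborhood of $s_0$, use the compactness of $I$ alone, which by a tube-lemma argument already gives $\sup_{\theta\in I}\lvert F(s,\theta)-F(s_0,\theta)\rvert\to 0$ as $s\to s_0$ in $\Omega$.
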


An immediate consequence of this is that the deformed mass and phase functions associated to a given $E$ are continuous functions on $\bf{R}\times \Stab(\cD)$ --- see \Cref{C:continuityoffunctions}. Beyond these examples, \Cref{T:introA} gives a simple recipe for producing continuous functions on $\Stab(\cD)$. 

Let $\mathscr{M}(\bf{R})$ denote the space of finitely supported measures on $\bf{R}$, equipped with the weak topology dual to $\mathscr{C}^0(\bf{R})$ and let $\mathfrak{d}$ denote the set of isomorphism classes of non-zero objects of $\cD$.\footnote{Strictly speaking, we assume here that $\cD$ is essentially small so that $\mathfrak{d}$ is a set. This is true for all of the examples considered in practice --- see \cite{stacks-project}*{\href{https://stacks.math.columbia.edu/tag/077K}{Tag 077K}} for a related discussion.} Let $\d m$ denote the map $\Stab(\cD) \to \mathscr{M}(\bf{R})^\mathfrak{d}$ sending $\sigma \mapsto (\d m_{\sigma,E})_{E\in \mathfrak{d}}$.

\begin{introthm}
[= \Cref{C:continuousmaptomeasures} + \Cref{T:measuretheoreticembedding}]
\label{T:introB}
The map $\d m$ is continuous, as is the composite map $\ell$ in the diagram below:
\begin{equation*} 
    \begin{tikzcd}
        \Stab(\cD) \arrow[dr,dashed,"\ell",swap] \arrow[r,"\mathrm{d}m"] & \mathscr{M}(\bf{R})^\mathfrak{d}\arrow[d,"\log\int_{\bf{R}} (-) + i\pi \int_{\bf{R}}\theta(-)"]\\
        & \bf{C}^{\mathfrak{d}}.
    \end{tikzcd}
\end{equation*}
    Furthermore, both $\d m$ and $\ell$ are injective. Finally, $\ell$ induces a continuous injection $\overline{\ell}:\Stab(\cD)/\bf{C} \hookrightarrow \bf{C}^{\mathfrak{d}}/\bf{C}$, where $\bf{C}$ acts by componentwise translation on $\bf{C}^{\mathfrak{d}}$.
\end{introthm}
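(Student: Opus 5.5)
Continuity of $\d m$ comes first. The product space $\mathscr{M}(\bf{R})^\mathfrak{d}$ carries the product topology, and each factor carries the weak topology dual to $\mathscr{C}^0(\bf{R})$. So it suffices to show that for every fixed $E\in\mathfrak{d}$ and every $f\in \mathscr{C}^0(\bf{R})$, the function $\sigma\mapsto \int_{\bf{R}} f\,\d m_{\sigma,E}$ is continuous. This is the special case of \Cref{T:introA} in which $\Omega$ is a single point and $F(s,\theta)=f(\theta)$.

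For $\ell$, I write its $E$-component as $\log m_\sigma(E) + i\pi\, \phi_\sigma(E)$. Here $m_\sigma(E)=\int \d m_{\sigma,E}$ is the total mass, which is the sum of the masses of the HN factors and hence positive. The second term is the integral of $\theta$ against $\d m_{\sigma,E}$; after normalization by mass this is the average phase, and in any case it is a mass-weighted phase integral. Both integrands, $1$ and $\theta$, are continuous on $\bf{R}$, so continuity of each component again follows from \Cref{T:introA}. Positivity of the mass makes $\log$ continuous there.

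Next comes injectivity of $\d m$. The measure $\d m_{\sigma,E}$ records the phases and masses of the HN factors of $E$. In particular, its support lies in $\{\theta\}$ with total mass $|Z(E)|$ exactly when $E$ is $\sigma$-semistable of phase $\theta$. Hence $\d m$ determines the slicing $\cP_\sigma$: $\cP_\sigma(\theta)$ consists of $0$ together with those $E$ whose measure is a point mass at $\theta$. It also determines $Z$ on semistable objects, via $Z(E)=|Z(E)|e^{i\pi\theta}$. Every class in $K_0$ (or in $\Lambda$) is a sum of classes of semistable objects, so $Z$ is determined, and therefore $\sigma=(Z,\cP)$ is determined.

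For injectivity of $\ell$, I would try to recover $\d m$ from $\ell$, or at least recover $\sigma$ directly. Suppose $\ell(\sigma)=\ell(\tau)$. The key inequality is $m_\sigma(E)\ge |Z_\sigma(E)|$, with equality if and only if all HN factors have the same $Z$-argument. This happens if and only if $E$ is semistable, or its HN phases are spread over an interval of length less than $1$ with collinear central charges; the latter cannot occur unless all phases agree. My plan is therefore to detect semistability from the pair $(m,\phi)$ in a way that is intrinsic to the function $E\mapsto \ell(E)$ on all objects. For instance, if $\phi_\sigma$ is defined as the mass-weighted average phase, the criterion would be: $E$ is $\sigma$-semistable if and only if for every triangle $A\to E\to B$ one has the appropriate mass additivity/phase constraint. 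This is the step I expect to be the main obstacle. I would first try the cleaner route of proving that $\ell(\sigma)$ determines $Z_\sigma$ on all objects, by showing that $e^{\ell_E}$ equals $Z(E)$ exactly for semistable $E$. I would then identify the semistable objects as those $E$ for which $m(E)=|e^{\ell}\text{-derived }Z(E)|$, where $Z$ is first reconstructed additively from triangles. If that fails, I would use the deformed mass and phase functions $m_t$ and $\phi_t$, since their $t$-dependence encodes the full HN measure through a Laplace-transform type uniqueness argument.

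Finally, for $\overline{\ell}$: the action of $z=x+iy\in\bf{C}$ on $\Stab(\cD)$ rescales every mass by $e^{-\pi y}$, or by $e^{\pi\cdot}$ depending on convention, and shifts every phase by $x$. So $\ell(z\cdot\sigma)=\ell(\sigma)+c(z)$ componentwise, where $c$ is an affine bijection of $\bf{C}$. Thus $\ell$ is equivariant and descends to a continuous map $\overline{\ell}$ on the quotients, continuous by the universal property of the quotient topology. For injectivity, suppose $\ell(\sigma)=\ell(\tau)+w$. Choose $z$ with $c(z)=w$. Then $\ell(\sigma)=\ell(z\tau)$, so $\sigma=z\tau$ by injectivity of $\ell$.
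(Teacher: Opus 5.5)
Your continuity arguments are fine. Your direct proof that $\d m$ is injective is also correct: a point mass at $\theta$ detects $\cP_\sigma(\theta)$, and $Z_\sigma(E)=\int e^{i\pi\theta}\,\d m_{\sigma,E}$. It is in fact more direct than the paper, which deduces injectivity of $\d m$ from that of $\ell$. Your equivariance argument for $\overline{\ell}$ is fine \emph{given} that $\ell$ is injective.

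The gap is that injectivity of $\ell$ is never proved. It does not follow from the $\d m$ step, because $\ell=\alpha\circ\d m$ with $\alpha$ far from injective. None of your candidate routes closes it:
\begin{itemize}
\item The claim that $m_\sigma(E)=|Z_\sigma(E)|$ forces semistability is false. For $S$ stable, $E=S\oplus S[2]$ has $m(E)=2m(S)=|Z(E)|$.
\item ``$e^{\ell_E}=Z(E)$ exactly for semistable $E$'' also fails. $E=S\oplus S[2]\oplus S[4]$ has average phase $\phi(S)+2$, hence $e^{\ell_\sigma(E)}=3Z(S)=Z_\sigma(E)$.
\item Reconstructing $Z$ ``additively from triangles'' is circular. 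You only know $e^{\ell_E}=Z(E)$ on semistable objects, which are exactly what you are trying to identify.
\item The Laplace-transform fallback uses the whole family $(\ell^t)_{t\in\mathbf{R}}$, i.e.\ essentially $\d m$ again, not the single map $\ell$ of the statement.
\end{itemize}

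The missing idea is that the real part of $\ell$, the mass function, already detects \emph{stable} (not semistable) objects: $F$ is $\sigma$-stable iff every exact triangle $E\to F\to G$ with $m_\sigma(F)=m_\sigma(E)+m_\sigma(G)$ has $E\cong 0$ or $G\cong 0$.
\begin{itemize}
\item If $F$ is not stable, a step of its HN filtration (or of a JH filtration inside $\cP_\sigma(\phi)$) gives a nontrivial mass-additive triangle.
\item Conversely, let $F$ be stable of phase $1$. It is simple in $\cP_\sigma(0,1]$, so the long exact homology sequence gives either $0\to H_1(G)\to H_0(E)\to F\to 0$ or $0\to F\to H_0(G)\to H_{-1}(E)\to 0$. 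In the first case, the triangle inequality together with $m(E)=\sum_i m(H_i(E))$ forces all other homology of $E$ and $G$ to vanish.
\item Now run the HN curve of $H_1(G)$ backwards and then the HN curve of $H_0(E)$. This path has length $m(H_1(G))+m(H_0(E))$ and joins $Z(H_1(G))$ to $Z(H_0(E))$, whose difference is $Z(F)$ with $|Z(F)|=m(F)$. Equality of lengths forces every segment to point along $Z(F)\in\mathbf{R}_{<0}$. The reversed segments $-Z(\HN_i(H_1(G)))$ never do, so $H_1(G)=0$ and hence $G\cong 0$; the second case is symmetric.
\end{itemize}
With the stable objects in hand, $\Im\ell$ gives their phases, since $\phi^t_\sigma=\phi_\sigma$ on semistable objects. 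Finite length of $\cP_\sigma(\phi)$ recovers $\cP_\sigma(\phi)$ as iterated extensions of stable objects of phase $\phi$, and $Z$ is recovered from the HN factors. This is where the finite-length hypothesis is genuinely used, unlike in your $\d m$ argument.
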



A defect of \Cref{T:measuretheoreticembedding} is that the maps it constructs from $\Stab(\cD)$ are not proveably homeo\-morphisms onto their images. This difficulty stems from the fact that the upper and lower phase functions $\phi^+(E)$ and $\phi^-(E)$ do not admit continuous extensions to $\mathscr{M}(\bf{R})$ with the weak topology --- see \Cref{L:upperlowernotcont}.

We define a different topology on $\mathscr{M}(\bf{R})$ by interpreting it as a limit of configuration spaces of weighted points in $\bf{R}$. We refer to this as the \emph{strong} topology on $\mathscr{M}(\bf{R})$. A key property of this topology is that the functions $\phi^{\pm}(E)$ admit continuous extensions to $\mathscr{M}(\bf{R})$ by \Cref{L:continuousfunctionsstrong}. Using this, we define a strong topology on $\mathscr{M}(\bf{R})^{\mathfrak{d}}$, which is a strengthening of the product topology coming from equipping $\mathscr{M}(\bf{R})$ with the strong topology.  

\begin{introthm}
[= \Cref{T:strongembedding}]
\label{T:introC}
    The map $\d m:\Stab(\cD) \to \mathscr{M}(\bf{R})^{\mathfrak{d}}$ is a homeomorphism onto its image when $\mathscr{M}(\bf{R})^{\mathfrak{d}}$ is equipped with the strong topology.
\end{introthm}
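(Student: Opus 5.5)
We prove continuity of $\d m$ in the strong topology and continuity of its inverse on the image separately.

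\emph{Continuity of $\d m$ into the strong topology.} Recall that the topology on $\Stab(\cD)$ is generated by the metric
\[
d(\sigma,\tau)=\sup_{E\neq 0}\max\Bigl\{|\phi^+_\sigma(E)-\phi^+_\tau(E)|,\ |\phi^-_\sigma(E)-\phi^-_\tau(E)|,\ |\log m_\sigma(E)-\log m_\tau(E)|\Bigr\}
\]
together with the central charge. The strong topology on $\mathscr{M}(\bf{R})$ is modelled on configuration spaces of weighted points. So we must show that for $\tau$ close to $\sigma$ the HN factors of $E$ with respect to $\tau$ cluster near the HN phases of $E$ with respect to $\sigma$, with the weights near each cluster close to the corresponding $\sigma$-masses. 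For this we use Bridgeland's local structure. If $d(\sigma,\tau)<\epsilon$, each $\sigma$-semistable factor $A_i$ of phase $\phi_i$ has $\tau$-HN factors with phases in $(\phi_i-\epsilon,\phi_i+\epsilon)$. Refining the $\sigma$-HN filtration of $E$ by the $\tau$-filtrations of its factors, and choosing $\epsilon$ smaller than half the gaps between the distinct $\sigma$-phases, gives the $\tau$-HN filtration of $E$. The $\tau$-mass in the $i$th window is then $m_\tau(A_i)$, which is within a factor $e^{\pm\epsilon}$ of $m_\sigma(A_i)$. The phrase ``strengthening of the product topology'' suggests that the strong topology on $\mathscr{M}(\bf{R})^\mathfrak{d}$ imposes these estimates uniformly in $E\in\mathfrak{d}$. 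The argument above is indeed uniform, since the window width $\epsilon$ depends only on $d(\sigma,\tau)$. The one non-uniform ingredient is the gaps between phases; I expect the strong topology is defined so as to measure exactly this clustering uniformly, for instance via a Hausdorff-type distance with weights. I will match the argument to the precise definition.

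\emph{Continuity of the inverse.} Suppose $\d m_{\tau}\to \d m_\sigma$ in the strong topology. By \Cref{L:continuousfunctionsstrong}, $\phi^\pm$ and the total mass $\int\d m$ extend continuously to the strong topology; the required convergence must moreover be uniform in $E$, which is what the strengthened topology on $\mathscr{M}(\bf{R})^\mathfrak{d}$ supplies. This gives
\[
\sup_E |\phi^\pm_\tau(E)-\phi^\pm_\sigma(E)|\to 0 \quad\text{and}\quad \sup_E|\log m_\tau(E)-\log m_\sigma(E)|\to 0.
\]
It remains to control the central charge. Since $Z_\tau(E)$ for $\tau$-semistable $E$ equals $m_\tau(E)e^{i\pi\phi_\tau(E)}$, we have
\[
Z_\tau(E)=\int e^{i\pi\theta}\,\d m_{\tau,E}(\theta),
\]
and this converges for each fixed $E$. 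Because $Z$ factors through a finite-rank lattice $\Lambda$, convergence on finitely many objects whose classes span $\Lambda\otimes\bf{Q}$ gives convergence $Z_\tau\to Z_\sigma$ in $\Hom(\Lambda,\bf{C})$. Alternatively, the support property lets us deduce it from the slicing distance via Bridgeland's deformation theorem. Combined with the previous display, this shows $\tau\to\sigma$ in $\Stab(\cD)$.

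\emph{Main obstacle.} The hard part is the uniformity in $E$ in the first step. The phase gaps of $\sigma$-HN filtrations can be arbitrarily small across objects, so pointwise clustering is not uniform. The plan is to use the definition of the strong topology on the product: presumably it is defined so that $\d m_\tau$ lies in a neighbourhood of $\d m_\sigma$ if, for every $E$, each $\tau$-factor can be matched within $\epsilon$ in phase to the $\sigma$-data. Closeness of this kind, measured by a Lévy--Prokhorov-type distance between weighted configurations, holds uniformly by the refinement argument with no need for gap conditions. If the definition demands more, I will fall back on verifying neighbourhood bases directly through $d(\sigma,\tau)$.
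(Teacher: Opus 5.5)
There is a genuine gap, and it is in the forward direction. Your inverse-continuity step is essentially the paper's. The strong topology on $\mathscr{M}(\mathbf{R})^{\mathfrak{d}}$ is the subspace topology inside $\mathscr{M}(\mathbf{R})^{\mathfrak{d}}\times(\mathbf{R}^3)^{\mathfrak{d}}$. The first factor carries the \emph{product} of the strong topologies on $\mathscr{M}(\mathbf{R})$, and the second records $(\log m,\phi^+,\phi^-)$ with the \emph{uniform} topology. So convergence of the second component is literally $d(\sigma_k,\sigma)\to 0$, and your central-charge step is harmless but unnecessary.

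The strong topology on a single $\mathscr{M}(\mathbf{R})$ is the colimit of the quotient topologies $\pi_d\colon\mathbf{R}^d\times\mathbf{R}^d_{>0}\to\mathscr{M}_{\le d}(\mathbf{R})$. It is much finer than any ``clustering'' condition. Each $\mathscr{M}_{\le d-1}(\mathbf{R})$ is closed in $\mathscr{M}_{\le d}(\mathbf{R})$, and the fibres of $\pi_d$ are closed. Hence a strongly convergent sequence must eventually have supports of bounded cardinality and individual weights bounded away from $0$. For example, $\mu_k=\tfrac1k\delta_{1/k}+(2-\tfrac1k)\delta_0$ does not converge strongly to $2\delta_0$: any accumulation point of $\pi_d^{-1}\{\mu_k\}$ would need some $t_i\to 0$, so $\{\mu_k\}$ is closed. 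Yet this sequence satisfies every Hausdorff-type clustering estimate you propose. Moreover, a L\'evy--Prokhorov distance metrizes the weak topology, in which $\phi^{\pm}$ are discontinuous (\Cref{L:upperlowernotcont}). Your refinement argument is exactly the argument of \Cref{P:continuity}. It controls the phase windows and the total mass $m_\tau(A_i)$ in each window. It says nothing about how many $\tau$-HN factors each $A_i$ has, or how small their masses can be, so it only yields weak convergence.

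The missing ingredient is the local finiteness of walls supplied by the support property, \Cref{L:wallandchamber}. Near $\sigma$ there are finitely many strata, and on each the HN filtration of $E$ is constant with fixed factors $F_1,\dots,F_m$. Take $\sigma_k\to\sigma$ inside one stratum. The $F_i$ are nonzero and remain $\sigma$-semistable, since semistability is a closed condition. So $(\phi_{\sigma_k}(F_i),m_{\sigma_k}(F_i))_i$ converges in $\mathbf{R}^m\times\mathbf{R}^m_{>0}$ to $(\phi_\sigma(F_i),m_\sigma(F_i))_i$. Its image under $\pi_m$ is $\d m_{\sigma,E}$, because factors with coinciding $\sigma$-phase merge into a single $\sigma$-HN factor whose mass is the sum. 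Splitting an arbitrary sequence into finitely many such subsequences finishes the argument.

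Conversely, the obstacle you singled out, uniformity in $E$, does not arise. The measure coordinates need only converge for each $E$ separately. The uniform requirement on $(\log m,\phi^{\pm})$ is automatic from the definition of Bridgeland's metric. Note that your forward argument never uses the support property, and this is precisely where the paper needs it.
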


Regarding $\mathscr{M}(\bf{R})$ as a limit of configuration spaces of points also gives it a natural smooth structure, in the generalized sense proposed by Souriau \cite{SouriauGroupes}. We discuss this briefly in \Cref{S:continuityproperties}, noting that deformed mass functions are smooth with respect to this smooth structure, whereas this is not the case for the usual manifold structure on $\Stab(\cD)$. 

The latter part of the paper is dedicated to proving certain fundamental inequalities for the deformed mass function. Among these is a strengthened version of the triangle inequality for deformed mass functions, which is used to establish Theorems \ref{T:introA}, \ref{T:introB}, and \ref{T:introC}.

\begin{introthm}
[= \Cref{L:triangle_inequality} + \Cref{C:reverse_triangle_inequality}]
\label{T:introD}
For any exact triangle $E \to F \to G$ in $\cD$ and any real numbers $a,t$ one has 
\begin{equation}
    0\le m_\sigma^t(E) + m_\sigma^t(G) - m_\sigma^t(F) \le (1+e^{\lvert t\rvert})\left(m_\sigma^t(G^{>a}) + m_\sigma^t(E^{\le a})\right).
\end{equation}
\end{introthm}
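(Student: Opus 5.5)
The lower bound $0\le m^t_\sigma(E)+m^t_\sigma(G)-m^t_\sigma(F)$ is the triangle inequality $m^t_\sigma(F)\le m^t_\sigma(E)+m^t_\sigma(G)$ (\Cref{L:triangle_inequality}). I would prove it by induction on the total number of Harder--Narasimhan factors of $E$ and $G$, comparing the HN filtration of $F$ with the filtration $E\to F\to G$ refined by the HN filtrations of $E$ and $G$.

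The inductive step has two ingredients:
\begin{itemize}
\item If $A\to B\to C$ is exact with $A,C$ semistable, then $m^t_\sigma(B)\le m^t_\sigma(A)+m^t_\sigma(C)$. I would obtain this from the convexity of $\theta\mapsto |Z|e^{t\theta}$ together with the see-saw behaviour of the central charge and the phases of HN factors.
\item Subadditivity for longer filtrations then follows by splicing triangles through the octahedral axiom.
\end{itemize}
I would also record two simple facts used below. First, $m^t_\sigma(X[1])=e^{t}m^t_\sigma(X)$, since shifting multiplies $Z$ by $-1$ and raises every phase by $1$. Second, $m^t_\sigma(X)=m^t_\sigma(X^{>a})+m^t_\sigma(X^{\le a})$ for every $X$, because the HN filtration of $X$ is the concatenation of those of its truncations.

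For the upper bound I would use the octahedral axiom to refine $E\to F\to G$ into a three-step filtration of $F$ with subquotients $E^{>a}$, $M$, and $G^{\le a}$. Here $M$ sits in an exact triangle $E^{\le a}\to M\to G^{>a}$. Concretely:
\begin{itemize}
\item let $D=\fib(F\to G^{\le a})$, so that $E\to D\to G^{>a}$ is exact;
\item $E^{>a}\to E\to D$ gives $E^{>a}\to D$;
\item set $M=\Cone(E^{>a}\to D)$; a second octahedron produces the triangle $E^{\le a}\to M\to G^{>a}$.
\end{itemize}
By the truncation identity, $m^t_\sigma(E)+m^t_\sigma(G)=m^t_\sigma(E^{>a})+m^t_\sigma(G^{\le a})+m^t_\sigma(E^{\le a})+m^t_\sigma(G^{>a})$. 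It therefore suffices to show
\[
m^t_\sigma(F)\ \ge\ m^t_\sigma(E^{>a})+m^t_\sigma(G^{\le a})-e^{|t|}\bigl(m^t_\sigma(E^{\le a})+m^t_\sigma(G^{>a})\bigr).
\]

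To get this I would apply the triangle inequality to rotations of the triangles in the filtration. From $D\to F\to G^{\le a}$ rotated to $G^{\le a}[-1]\to D\to F$, and from $E^{>a}\to D\to M$, one can bound the error terms using $m^t_\sigma(M)\le m^t_\sigma(E^{\le a})+m^t_\sigma(G^{>a})$. Each shift costs a factor $e^{\pm t}\le e^{|t|}$, which is the source of the constant $1+e^{|t|}$.

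The main obstacle is that naive rotation only gives bounds such as $m^t_\sigma(F)\ge m^t_\sigma(D)-e^{-t}m^t_\sigma(G^{\le a})$, which has the wrong sign on $G^{\le a}$. To get the positive contributions $m^t_\sigma(E^{>a})+m^t_\sigma(G^{\le a})$ one must use that there are no morphisms from $\cP(>a)$ to $\cP(\le a)$. My first attempt would be to compare $F^{>a}$ with $D^{>a}$ and $F^{\le a}$ with $G^{\le a}$. Since $G^{\le a}\in\cP(\le a)$, the truncation functor $(-)^{>a}$ applied to $D\to F\to G^{\le a}$ should make $F^{>a}$ and $D^{>a}$ differ only through data controlled by $M$, and dually $F^{\le a}$ and $G^{\le a}$ likewise. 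Combining this with the truncation identity for $F$ and the triangle inequality for the truncated triangles, with one shift each, should yield the displayed estimate.
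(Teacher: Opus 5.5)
\textbf{The lower bound has a genuine gap.} The two-semistable case is true. What it needs, though, is the subadditivity $g_t(u_1+u_2)\le g_t(u_1)+g_t(u_2)$ of $g_t(z)=|z|e^{t\phi(z)}$ for $\phi(u_2)\le\phi(u_1)<\phi(u_2)+1$ (Ikeda's Lemma~3.6), which is a planar convexity statement. Convexity of $\theta\mapsto e^{t\theta}$ only gives Jensen's inequality $m_\sigma(E)e^{t\phi_\sigma(E)}\le m^t_\sigma(E)$.

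The real problem is passing to the general case: your induction on the number of HN factors of $E$ and $G$ does not close under splicing. Write $G'\to G\to G_q$ with $G_q$ the last HN factor, and set $F'=\fib(F\to G_q)$. The octahedral axiom gives triangles $E\to F'\to G'$ and $F'\to F\to G_q$. Induction handles the first, but the second has outer term $F'$, an extension of $G'$ by $E$, and its number of HN factors is not bounded by $p+q-1$. For example, on $\bP^1$ with its standard stability condition, $\bigoplus_{i=1}^k(\cO(a_i)\oplus\cO(-a_i))$ with distinct $0<a_i<n$ is an extension of $\cO(n)^{\oplus k}$ by $\cO(-n)^{\oplus k}$ and has $2k$ HN factors.

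What your scheme actually requires is $m^t_\sigma(F)\le\sum_i m^t_\sigma(S_i)$ for an arbitrary Postnikov tower of $F$ whose semistable factors $S_i$ are in unsorted phase order. Sorting such a tower by two-term swaps can replace two factors by many, and you give no argument that the process terminates. For a pre-stability condition this is not clear without some extra discreteness input.

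The paper does not sort. It keeps Ikeda's reduction to triangles $C[-1]\to A\to B$ coming from short exact sequences $0\to A\to B\to C\to 0$ in $\cP_\sigma(0,1]$. It then proves $m^t_\sigma(A)\le m^t_\sigma(B)+m^t_\sigma(C[-1])$ via \Cref{L:gt_inequality}, comparing the HN curve of $A$ with the path that runs up the HN curve of $B$ and then along the vectors $-Z(\HN_i(C))$. Those last vectors leave the upper half-plane, which is exactly the case Ikeda's Lemma~3.7 does not cover. Your upper bound uses the triangle inequality three times, so it inherits this gap.

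\textbf{The upper bound follows the paper's route, but the decisive step is only hoped for.} Your $D$ is the paper's $E_1$, your $M$ is its $N$, and the reduction is the same. However, the key step is slightly misplaced: $F^{>a}$ and $D^{>a}$ do not differ at all. Because $G^{\le a}$ and $G^{\le a}[-1]$ lie in $\cP(\le a)$, one has $D^{>a}\cong F^{>a}$, and then $D^{\le a}\cong\fib(F^{\le a}\to G^{\le a})$. The octahedral axiom for $E^{>a}\to F^{>a}\to D$ gives a triangle $\Cone(E^{>a}\to F^{>a})\to M\to D^{\le a}$ whose outer terms lie in $\cP(>a)$ and $\cP(\le a)$. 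So this is the truncation triangle of $M$, and
\[
m^t_\sigma(\Cone(E^{>a}\to F^{>a}))+m^t_\sigma(\fib(F^{\le a}\to G^{\le a}))=m^t_\sigma(M)\le m^t_\sigma(E^{\le a})+m^t_\sigma(G^{>a}).
\]
Now apply the triangle inequality to $\Cone(E^{>a}\to F^{>a})[-1]\to E^{>a}\to F^{>a}$ and to $F^{\le a}\to G^{\le a}\to\fib(F^{\le a}\to G^{\le a})[1]$. This bounds $m^t_\sigma(E^{>a})+m^t_\sigma(G^{\le a})-m^t_\sigma(F)$ by $e^{-t}$ and $e^{t}$ times the two terms above. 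The constant $e^{|t|}$ comes precisely from these two error terms being the two truncations of the single object $M$. Bounding each separately by $m^t_\sigma(M)$ would only give the constant $1+e^{t}+e^{-t}$.
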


Here, $G^{>a}$ denotes the part of $G$ lying in $\cP_\sigma(a,\infty)$, defined using the bounded t-structure $(\cP_\sigma(a,\infty), \cP_\sigma(-\infty,a])$ for any $a\in \bf{R}$, and $E^{\le a}$ is defined analogously. The triangle inequality 
\[
    m_\sigma^t(F) \le m_\sigma^t(E) + m_\sigma^t(G)
\]
was already stated as \cite{ikeda_2021}*{Prop. 3.3}, however the proof given there contains a gap. In \Cref{S:triangleinequality}, we explain the gap and complete the proof. 

The last section is dedicated to proving a technical result, which estimates the deformed masses of truncations of objects --- see \Cref{T:filtration_inequality}. This is used crucially in \cite{augmented}, but may be of independent interest and utility.

\section*{Acknowledgments}

We are grateful to Hannah Dell, David Ploog, and Francesco Sala for helpful conversations on topics related to the present work. We are especially thankful to Asilata Bapat, Anand Deopurkar, and Anthony Licata for communicating to us an argument which greatly simplified the proof of \Cref{P:continuity} and thereby \Cref{T:jointlycontinuous}. Special thanks are also due to Nicol\'{a}s Vilches for providing many useful comments and improvements on a draft of this article.

The authors appreciate the support of the NSF CAREER grant DMS-1945478, and NSF FRG grant DMS-2052936, and the support and hospitality of the Simons Laufer Mathematical Sciences Institute during the preparation of this paper. The first author was also supported by an Alfred P. Sloan Foundation Research Fellowship (FG-2022-18834), and a Simons Foundation Fellowship. The second author was also supported by NSF
grant DMS-250340. He also thanks Maria Teresa for her love and support. 

\section{Definitions}
\label{S:deformed_mass_and_phase}

Let $\cD$ denote a $k$-linear triangulated category, for $k$ a field. To fix conventions, we recall the definition of (pre-)stability conditions on triangulated categories \cites{Bayer_short,Br07}. 

\begin{defn}
\label{D:stabilitycondition}
    A \emph{pre-stability condition} on $\cD$ is a pair $(Z,\cP)$ where \vspace{-2mm}
    \begin{enumerate}  
        \item $Z \in \Hom_{\bf{Z}}(\rm{K}_0(\cD),\bf{C})$ is called the \emph{central charge}; and \vspace{-2mm}
        \item $\cP = \{\cP(\phi):\phi \in \bf{R}\}$ is a collection of full additive subcategories of $\cD$, called a \emph{slicing} such that:\vspace{-3mm}
        \begin{enumerate}
            \item if $\phi>\psi$ then $\Hom_{\cD}(\cP(\phi),\cP(\psi)) = 0$; \vspace{-3mm}
            \item $\cP(\phi)[1] = \cP(\phi+1)$ for all $\phi \in \bf{R}$; and \vspace{-3mm}
            \item for all non-zero objects $E$ of $\cD$, there is a sequence of exact triangles
            \[
                \begin{tikzcd}
                    0 = E_0 \arrow[r] & E_1\arrow[d]\arrow[r] & \cdots\arrow[r]& E_{n-1}\arrow[r]& E\arrow[d]\\
                    & G_1 \arrow[ul,"+1"]& \cdots&&G_n \arrow[ul,"+1"]
                \end{tikzcd}
            \]
            where $G_i \in \cP(\phi_i)$ for all $i=1,\ldots, n$ and $\phi_1>\cdots>\phi_n$, called a Harder--Narasimhan (HN) filtration.
        \end{enumerate}
    \end{enumerate}

    Non-zero objects of $\cP(\phi)$ are called \emph{semistable} of phase $\phi$. The objects $\{G_i\}_{i=1}^n$ are called the Harder--Narasimhan factors of $E$ with respect to $\sigma$. It can be deduced from the definition that the HN factors are unique up to isomorphism \cites{Br07,GKRpaper04}. 
    
    We sometimes write $\{\HN_i^\sigma(E)\}$ for the \emph{Harder--Narasimhan factors} of $E$. A pre-stability condition is called a \emph{stability condition} if in addition each of the categories $\cP(\phi)$ is a finite length Abelian category. In this case, the Jordan--H\"older (JH) theorem for Abelian categories  \cite{stacks-project}*{\href{https://stacks.math.columbia.edu/tag/0FCD}{Tag 0FCD}} implies the existence of JH filtrations of non-zero objects of $\cP(\phi)$ by simple objects. The simple objects of $\cP(\phi)$ are called \emph{stable} of phase $\phi$. 
\end{defn}

\begin{rem}
\label{R:weakenedhypothesis}
    The definition of stability condition in this paper differs slightly from other definitions in the literature. In \cite{Br07}, Bridgeland imposes \emph{local finiteness} on the pair $(Z,\cP)$. On the other hand, following Kontsevich--Soibelman \cite{KS08}, the now-standard definition in \cite{Bayer_short} requires that $(Z,\cP)$ satisfy the \emph{support property} with respect to a fixed map $v:\rm{K}_0(\cD) \twoheadrightarrow \Lambda$ of Abelian groups, where $\Lambda$ is of positive and finite rank. Usually, $\Lambda$ is taken to be torsion free. A pair $(Z,\cP)$ as in \Cref{D:stabilitycondition} satisfies the support property if $Z$ factors through $v$ and for any norm $\lVert\:\cdot\:\rVert$ on $\Lambda_{\bf{R}}$, one has 
    \[
        \inf_{0\ne E \in \cP(\phi),\phi \in \bf{R}} \frac{\lvert Z(E)\rvert}{\lVert v(E)\rVert} > 0. 
    \]
    Both local finiteness and the support property imply that the categories $\cP(\phi)$ are all finite-length. In \Cref{S:continuityproperties}, stability conditions will be as in \Cref{D:stabilitycondition}, while in \Cref{S:strongerembedding} we will assume that stability conditions satisfy the support property. The results of the Sections \ref{S:triangleinequality} and \ref{S:estimatesfortruncations} require only pre-stability conditions. 
\end{rem}

If $E$ is a $\sigma$-semistable object, then its \emph{mass} is defined as $m_\sigma(E) = \lvert Z(E)\rvert$. If $E$ is an arbitrary non-zero object of $\cD$ with HN factors $G_1,\ldots, G_n$ as above, then its mass is $m_\sigma(E) = \sum_{i=1}^n m_\sigma(G_i)$ Remarkably, the space of (pre-)stability conditions has a metric topology, with generalized metric given by 
\begin{equation}
\label{E:Brmetric}
    d(\sigma,\tau) = \sup_{0\ne E \in \cD}\left\{ \lvert \phi_{\sigma_1}^+(E) - \phi_{\sigma_2}^+(E)\rvert, \lvert \phi_{\sigma_1}^-(E) - \phi_{\sigma_2}^-(E)\rvert, \left\lvert \log  \tfrac{m_{\sigma_2}(E)}{m_{\sigma_1}(E)}\right\rvert \right\}.
\end{equation}
See \cite{Br07}*{Prop. 8.1} for a proof and related discussions.


\begin{defn}
\label{D:massmeasures}
    Let $E$ be a non-zero object of $\cD$. The \emph{mass measure} associated to $E$ by $\sigma$ is the finitely supported measure 
    \begin{equation}
    \label{E:massmeasure}
        \rm{d}m_{\sigma,E} = \sum_{i=1}^n \lvert Z_\sigma(G_i)\rvert \cdot \delta_{\phi_\sigma(G_i)}
    \end{equation}
    on $\bf{R}$, where $\delta_x$ is the indicator measure of $\{x\}$.\endnote{That is, given a subset $A\subset \bf{R}$ we have 
\[
    \delta_{x}(A) = 
    \begin{cases}
        1 & x \in A \\
        0 & x \not\in A.
    \end{cases}
\]} Let $t \in \bf{R}$ be given; the $t$-\emph{deformed mass} of $E$ with respect to $\sigma$ is 
    \[
        m_\sigma^t(E) := \int_{\bf{R}} e^{t\theta}\d m_{\sigma,E}(\theta) = \sum_{i=1}^n  \lvert Z_\sigma(G_i)\rvert \cdot e^{t\theta_i}. 
    \]
    When $t=0$, $m_\sigma^0(E) = m_\sigma(E)$ is simply the mass of $E$ as defined above.
\end{defn}

The deformed mass was introduced by Dimitrov--Haiden--Katzarkov--Kontsevich \cite{DynamicsDHKK} in the context of categorical dynamics, under the name \emph{polynomial mass functions}. 

\begin{defn}
    Given $t\in \bf{R}$, the $t$-\emph{deformed (average) phase} of $E$ is 
    \[
        \phi_\sigma^t(E) = 
        \begin{cases}
            \frac{1}{m(E)}\sum_{i=1}^n \lvert Z(G_i)\rvert \cdot \theta_i & t=0\\
            \frac{1}{t}\log\left(\frac{m^t(E)}{m(E)}\right)& t\ne 0.
        \end{cases}
    \]
    When $t = 0$, we recover the average phase of $E$ as in \cite{quasiconvergence}*{\S2}. We write $\phi_\sigma^0(E) = \phi_\sigma(E)$. 
\end{defn}

The average phase is the expectation value of the random variable $\theta$ with respect to the probability measure $\d m_{\sigma,E}/m_\sigma(E)$. We will omit $\sigma$ from the notation when it is clear from the context. 

\begin{prop}
\label{P:firstproperties}
    The functions $\phi_\sigma^t(E)$ and $m_\sigma^t(E)$ satisfy the following properties:
    \vspace{-2mm}
    \begin{enumerate}
        \item $m_\sigma^t(E)$ is an analytic function of $t$; \vspace{-2mm}
        \item $m_\sigma(E)\cdot e^{t\phi_\sigma(E)}\le m^t_\sigma(E)$ and if $t\ne 0$ there is equality if and only if $E$ is $\sigma$-semistable;  \vspace{-2mm}
        \item $\phi_\sigma^t(E)$ is a monotonically increasing analytic function of $t \in \bf{R}$, which is strictly increasing unless $E$ is $\sigma$-semistable, in which case it is constant; and
        \vspace{-2mm}
        \item $\lim_{t\to\pm\infty} \phi_\sigma^t(E) = \phi^\pm_\sigma(E)$.\vspace{-2mm}
    \end{enumerate}
\end{prop}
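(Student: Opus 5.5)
The whole statement reduces to elementary facts about the finitely supported measure $\d m_{\sigma,E} = \sum_i m_i \delta_{\theta_i}$, where $m_i = \lvert Z(G_i)\rvert > 0$ and $\theta_1 > \cdots > \theta_n$. I would treat it as a statement about a finite exponential sum and the associated probability measure $p_i = m_i/m(E)$.

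For (1), $m^t(E) = \sum_i m_i e^{t\theta_i}$ is a finite sum of entire functions of $t$, so it is analytic.

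For (2), apply Jensen's inequality to the strictly convex function $x\mapsto e^{tx}$ (for $t\neq 0$) and the probability measure $p$:
\[
\sum_i p_i e^{t\theta_i} \ge e^{t\sum_i p_i\theta_i} = e^{t\phi(E)}.
\]
Multiplying by $m(E)$ gives the inequality. Equality holds if and only if $p$ is a Dirac mass, i.e. $n=1$, which is exactly when $E$ is semistable. For $t=0$ both sides equal $m(E)$.

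For (3), write $\phi^t(E) = \Lambda(t)/t$, where $\Lambda(t) = \log \sum_i p_i e^{t\theta_i}$ is the cumulant generating function. We have $\Lambda(0)=0$ and $\Lambda'(0) = \phi(E)$, so $\Lambda(t)/t$ extends analytically across $t=0$ with value $\phi(E)$. This matches the definition, and analyticity follows since $\Lambda$ is analytic (the sum is positive). For monotonicity, note that $\Lambda''(t)$ is the variance of $\theta$ under the tilted measure $p_i e^{t\theta_i}/\sum_j p_j e^{t\theta_j}$. This variance is $\ge 0$, with strict inequality unless $n=1$. Hence $\Lambda$ is convex, and strictly convex when $E$ is not semistable. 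Since $\Lambda(0)=0$, the slope $\Lambda(t)/t = (\Lambda(t)-\Lambda(0))/(t-0)$ of the secant from $0$ is nondecreasing in $t$, and strictly increasing under strict convexity. This is the standard monotone-slopes property of convex functions, and it also holds across $t=0$, where the value is $\Lambda'(0)$. If $n=1$, then $\Lambda(t) = t\theta_1$ and $\phi^t \equiv \theta_1$ is constant.

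For (4), as $t\to+\infty$ write
\[
\Lambda(t) = t\theta_1 + \log\Big(p_1 + \sum_{i>1} p_i e^{t(\theta_i-\theta_1)}\Big).
\]
The logarithm tends to $\log p_1$, so $\Lambda(t)/t \to \theta_1 = \phi^+(E)$. Symmetrically, as $t\to-\infty$ factor out $e^{t\theta_n}$ to get the limit $\theta_n = \phi^-(E)$.

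The only mildly delicate point is making sure the $t=0$ case of $\phi^t$ glues analytically and monotonically to the $t\neq 0$ formula. The approach above handles this by identifying $\phi^t(E)$ with the secant slope $\Lambda(t)/t$ and using $\Lambda'(0)=\phi(E)$.
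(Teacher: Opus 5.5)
Your proof is correct. For (1) and (2) it is the paper's argument: Jensen applied to the strictly convex $e^{t\theta}$ against $\mathrm{d}m_{\sigma,E}/m(E)$. For (3) and (4) the ingredients are the same but packaged differently.

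\textbf{Monotonicity in (3).} The paper computes $\partial_t\phi^t(E) = t^{-2}\bigl(t\Lambda'(t)-\Lambda(t)\bigr)$ for $t\neq 0$. It shows the bracket is nonnegative by applying Jensen a second time, to the tilted measure $e^{t\theta}\,\mathrm{d}m_E/m^t(E)$. The resulting inequality $\Lambda(0)\ge \Lambda(t)-t\Lambda'(t)$ is the tangent-line form of the convexity you get from $\Lambda''=\mathrm{Var}\ge 0$, so the content is the same. Your secant-slope formulation covers $t=0$ and the passage across it in one step. The paper treats $t\neq 0$ via the derivative and relies on continuity at $0$.

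\textbf{Analyticity at $t=0$.} The paper expands $\log(m^t/m)$ to get $\phi^t=\phi^0+O(t)$ and then appeals to a holomorphic extension to remove the singularity. That appeal really only needs a punctured complex neighbourhood of $0$, since $m^t(E)$ can vanish at non-real $t$. Your argument is cleaner: $\Lambda$ is real-analytic with $\Lambda(0)=0$, so $\Lambda(t)/t$ is analytic with value $\Lambda'(0)=\phi(E)$.

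\textbf{The limits in (4).} The paper uses L'H\^opital, reducing to the limit of the tilted mean $\Lambda'(t)$. You factor out the dominant exponential $e^{t\theta_1}$ instead. Both are immediate.
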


\begin{proof}
    Property (1) is immediate, since $m^t(E)$ is a sum of functions of the form $e^{t\theta}$. For (2), since $e^{t \theta}$ is convex, Jensen's inequality applied to the probability measure $\d m_{E}/m(E)$ implies that 
    \begin{equation}\label{E:jensen}
        m(E) e^{t \phi(E)} \leq m^t(E).
    \end{equation}
    If $t \neq 0$, then $e^{t\theta}$ is strictly convex, so \eqref{E:jensen} is an equality if and only if $E$ is semistable. For (3), we first verify analyticity of $\phi_\sigma^t(E)$. For $t\ne 0$, $\phi_\sigma^t(E)$ is analytic being a composite of analytic functions. Next, write the $\sigma$-HN factors of $E$ as $G_1,\ldots, G_m$. Then, for $0<\lvert t\vert \ll 1$ we have
    \begin{align*}
        \phi_\sigma^t(E) & = \frac{1}{t} \log \left(1+ \frac{1}{m(E)} \left(\sum_{i=1}^m t\cdot \theta_i \cdot m(G_i) + \sum_{i=1}^m \sum_{n = 2}^\infty \frac{(t\cdot \theta_i)^n}{n!}m(G_i)\right)\right) \\
        & =\frac{1}{t}\left(\frac{1}{m(E)}\sum_{i=1}^m t\cdot \theta_i \cdot m(G_i) +O(t^2)\right)\\
        & = \phi_\sigma^0(E) + O(t).
    \end{align*}
    Since $\phi^t$ admits a natural extension to a holomorphic function on $\bf{C}^*$, this implies analyticity of $\phi_\sigma^t(E)$ at $0$. Next, applying Jensen's inequality again, this time to the probability measure $e^{t \theta} \d m_{E} / m^t(E)$, gives
    \[
        \frac{m(E)}{m^t(E)} = \int e^{-t \theta} \frac{e^{t\theta} \d m_E}{m^t(E)} \geq \exp\left(-t \int \theta \frac{e^{t\theta} \d m_E}{m^t(E)}\right) = \exp\left(\frac{-t}{m^t(E)} \frac{\partial m^t(E)}{\partial t}\right).
    \]
    Therefore, for $t\ne 0$
    \begin{equation} \label{E:jensen2}
        \frac{\partial \phi^t(E)}{\partial t} = \frac{1}{t^2} \left( \frac{t}{m^t(E)} \frac{\partial m^t(E)}{\partial t} - \log \left(\frac{m^t(E)}{m(E)}\right) \right) \geq 0,
    \end{equation}
with strict inequality unless $E$ is semistable. Thus, $\phi^t(E)$ is strictly increasing in $t$ if $E$ is not semistable, and constant in $t$ if $E$ is semistable. Finally, for (4) we compute 
\begin{align*}
    \lim_{t\to\infty} \phi^t(E)  & = \lim_{t\to \infty}\frac{\log(m^t(E)/m(E))}{t}\\
    & =  \lim_{t\to\infty} \frac{1}{m^t(E)}\frac{\partial m^t(E)}{\partial t} = \lim_{t\to\infty} \frac{\sum \theta_i \cdot e^{t\theta_i}m(F_i)}{\sum e^{t\theta_i}\cdot m(F_i)} = \phi^+(E). 
\end{align*}
The identity $\lim_{t\to-\infty} \phi^t(E) = \phi^-(E)$ is proven analogously.
\end{proof}

Finally, for any $t\in \bf{R}$, the \emph{log central charge functions} on $\cD$ are 
\begin{equation}
\label{E:ellt}
    \ell_\sigma^t(E) := \log(m_\sigma(E))+ i\pi \phi_\sigma^t(E).
\end{equation}
Note that only the imaginary part of $\ell_\sigma^t(E)$ depends on $t$. By \cite{Br07}*{Lem. 8.2}, the additive group $\bf{C}$ acts freely on $\Stab(\cD)$. The action is given by $w\cdot (Z,\cP) = (e^w\cdot Z,\cP^w)$, where $\cP^w(\phi) := \cP(\phi - \tfrac{\Im(w)}{\pi})$. It is an exercise to verify that 
\begin{equation}
\label{E:translation}
    \ell_{z\cdot \sigma}^t(E) = \ell_\sigma^t(E) + z
\end{equation}
for any $z\in \bf{C}$. The log central charge functions play a crucial role in the construction of the partial compactification of $\Stab(\cD)/\bf{C}$ in \cite{augmented}.

\section{Stability conditions as measure valued functions}
\label{S:continuityproperties}

Let $\mathscr{M}(\bf{R})$ denote the space of non-zero finitely supported non-negative measures on $\bf{R}$. An element $\mu$ of  $\mathscr{M}(\bf{R})$ can be written uniquely as 
\[
    \mu = \sum_{i=1}^m a_i \cdot \delta_{\theta_i}
\]
with $a_i\in \bf{R}_{>0}$ for all $i=1,\ldots, m$ and $\theta_1>\cdots>\theta_m$. In this notation, the support of $\mu$ is $\supp(\mu) = \{\theta_1,\ldots, \theta_m\}$. Next, let $\mathfrak{d}$ denote the set of isomorphism classes of non-zero objects of $\cD$. A pre-stability condition $\sigma$ on $\cD$ determines an element of $\mathscr{M}(\bf{R})^\mathfrak{d}$ by sending $E\in \mathfrak{d}$ to $\d m_{\sigma,E}$ as defined in \Cref{D:massmeasures}. Denote by 
\begin{equation}
    \rm{d}m:\Stab(\cD) \to \mathscr{M}(\bf{R})^\mathfrak{d}
\end{equation}
the corresponding map. In this section and the next, we will study properties of $\d m$. First, we establish some basic continuity properties. We will use the following elementary lemma several times:

\begin{lem}
\label{L:conevector}
    Let $0\le \epsilon<1$ be given. If $v_1,\ldots, v_n \in \bf{C}^*$ are contained in a cone emanating from the origin of angular width $\pi\epsilon$ radians then $\lvert \sum v_i\rvert/\sum \lvert v_i\rvert \in [1-\cos(\tfrac{\pi \epsilon}{2}),1]$.
\end{lem}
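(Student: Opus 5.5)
Since the quantity $\lvert \sum v_i\rvert/\sum\lvert v_i\rvert$ is invariant under rotating all the $v_i$ by a common angle and under scaling them by a common positive real, the plan is to rotate so that the cone is symmetric about the positive real axis. After this normalization every $v_i$ has argument in $[-\tfrac{\pi\epsilon}{2},\tfrac{\pi\epsilon}{2}]$. The upper bound $\lvert\sum v_i\rvert\le \sum\lvert v_i\rvert$ is just the triangle inequality, so only the lower bound needs an argument.

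For the lower bound, we project onto the real axis. Writing $v_i = \lvert v_i\rvert e^{i\alpha_i}$ with $\lvert\alpha_i\rvert\le \tfrac{\pi\epsilon}{2}<\tfrac{\pi}{2}$, we get
\[
\Bigl\lvert \sum v_i\Bigr\rvert \ge \Re\Bigl(\sum v_i\Bigr) = \sum \lvert v_i\rvert\cos\alpha_i \ge \cos\bigl(\tfrac{\pi\epsilon}{2}\bigr)\sum\lvert v_i\rvert,
\]
because $\cos$ is even and decreasing on $[0,\tfrac{\pi}{2}]$. Thus the ratio is at least $\cos(\tfrac{\pi\epsilon}{2})$.

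It remains to compare this with the stated bound $1-\cos(\tfrac{\pi\epsilon}{2})$, which does not follow in general: $\cos x\ge 1-\cos x$ holds exactly when $\cos x\ge \tfrac12$, i.e. when $\epsilon\le \tfrac23$. In that range we conclude directly, since $\cos(\tfrac{\pi\epsilon}{2}) \ge 1-\cos(\tfrac{\pi\epsilon}{2})$. For $\tfrac23<\epsilon<1$ the stated bound appears to be false as written. Take two unit vectors at angles $\pm\tfrac{\pi\epsilon}{2}$; then the ratio is exactly $\cos(\tfrac{\pi\epsilon}{2})$, which is smaller than $1-\cos(\tfrac{\pi\epsilon}{2})$ when $\epsilon$ is close to $1$. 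This example also shows $\cos(\tfrac{\pi\epsilon}{2})$ is the sharp lower bound. So the main obstacle is not the argument itself but reconciling it with the constant in the statement. I would prove the sharp bound $\lvert\sum v_i\rvert/\sum\lvert v_i\rvert\in[\cos(\tfrac{\pi\epsilon}{2}),1]$ and record the stated interval only in the range $\epsilon\le\tfrac23$.

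For the later applications, which presumably take $\epsilon\to0$ and only need the ratio to tend to $1$, the sharp form $[\cos(\tfrac{\pi\epsilon}{2}),1]$ is what should be used.
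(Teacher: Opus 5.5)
Your proof is correct and is essentially the paper's own argument. The paper centers the cone on $\mathbf{R}_{>0}\cdot i$ rather than on the positive real axis, and then uses exactly the inequalities $\cos(\tfrac{\pi\epsilon}{2})\sum\lvert v_i\rvert \le \lvert\sum v_i\rvert \le \sum\lvert v_i\rvert$, which is your projection argument.

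You are also right about the constant. That inequality gives the interval $[\cos(\tfrac{\pi\epsilon}{2}),1]$, so the ``$1-$'' in the statement is a typo; your two-vector example shows the statement as written fails for $\epsilon>\tfrac23$. The sharp form is also what \Cref{P:continuity} needs: there one wants $C(\epsilon)\to 1$ as $\epsilon\to 0$, whereas $1-\cos(\tfrac{\pi\epsilon}{2})\to 0$.
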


\begin{proof}
    Without loss of generality, we may assume that the cone is centered at $\bf{R}_{>0}\cdot i$. The result follows immediately from the inequalities $\cos(\tfrac{\pi\epsilon}{2})\sum \lvert v_i\rvert \le \left\lvert \sum v_i \right\rvert \le \sum \lvert v_i\rvert.$
\end{proof}

\begin{prop}
\label{P:continuity}
    For any $f\in \mathscr{C}^0(\bf{R})$ and $E\in \mathfrak{d}$ the map $\Stab(\cD)\to \bf{R}$ defined by $\sigma \mapsto \int_{\bf{R}} f\:\d m_{\sigma,E}$ is continuous.
\end{prop}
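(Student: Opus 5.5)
Fix $\sigma=(Z,\cP)$, $E\in\mathfrak{d}$, $f\in\mathscr{C}^0(\bf{R})$ and $\eta>0$. The aim is to show that $\lvert\int f\,\d m_{\tau,E}-\int f\,\d m_{\sigma,E}\rvert<\eta$ for every $\tau$ with $d(\sigma,\tau)<\epsilon$, once $\epsilon$ is small enough. The idea is to compare the $\tau$-HN filtration of $E$ with the $\sigma$-HN filtration one $\sigma$-HN factor at a time. Let $G_1,\dots,G_n$ be the $\sigma$-HN factors of $E$, with phases $\phi_1>\cdots>\phi_n$. Choose $\delta>0$ with $2\delta<\min_i(\phi_i-\phi_{i+1})$. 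Then $f$ is uniformly continuous on $K=[\phi_n-1,\phi_1+1]$, so we may also take $\delta$ so small that $\lvert f(x)-f(\phi_i)\rvert<\eta'$ whenever $\lvert x-\phi_i\rvert<2\delta$.

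The first step is to localize the $\tau$-HN factors. If $d(\sigma,\tau)<\epsilon<\delta$, then every object of $\cP_\sigma(\phi)$ lies in $\cP_\tau(\phi-\epsilon,\phi+\epsilon)$. Hence each $G_i$ has $\tau$-HN factors with phases in $(\phi_i-\epsilon,\phi_i+\epsilon)$. These intervals are disjoint and ordered, so concatenating the $\tau$-HN filtrations of the $G_i$ along the $\sigma$-HN filtration of $E$ produces a filtration of $E$ with decreasing phases. By uniqueness, this is the $\tau$-HN filtration of $E$. It follows that
\[
  \d m_{\tau,E}=\sum_{i=1}^n \d m_{\tau,G_i}, \qquad \supp \d m_{\tau,G_i}\subset(\phi_i-\epsilon,\phi_i+\epsilon).
\]
The uniqueness claim is the one point needing care, but it is the standard fact that a filtration with semistable factors of strictly decreasing phase is the HN filtration.

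The second step controls the total mass in each cluster. We have $m_\tau(G_i)=\d m_{\tau,G_i}(\bf{R})$, and the metric \eqref{E:Brmetric} gives $\lvert\log(m_\tau(G_i)/m_\sigma(G_i))\rvert<\epsilon$. Thus $\lvert m_\tau(G_i)-\lvert Z(G_i)\rvert\rvert\le(e^\epsilon-1)\lvert Z(G_i)\rvert$. As a cross-check that avoids the mass term of the metric, one could instead use \Cref{L:conevector}. The vectors $Z_\tau$ of the factors lie in a cone of width $2\pi\epsilon$, so $m_\tau(G_i)$ is within a factor $\cos(\pi\epsilon)^{-1}$ of $\lvert Z_\tau(G_i)\rvert$. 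Moreover, $Z_\tau(G_i)\to Z(G_i)$, because on a component $Z$ depends continuously on $\sigma$.

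The final step is the estimate
\[
\Bigl\lvert\int f\,\d m_{\tau,E}-\int f\,\d m_{\sigma,E}\Bigr\rvert\le\sum_i\Bigl(\int\lvert f-f(\phi_i)\rvert\,\d m_{\tau,G_i}+\lvert f(\phi_i)\rvert\,\bigl\lvert m_\tau(G_i)-\lvert Z(G_i)\rvert\bigr\rvert\Bigr).
\]
By the first two steps this is at most $\eta' e^\epsilon m_\sigma(E)+(e^\epsilon-1)\max_K\lvert f\rvert\, m_\sigma(E)$, which tends to $0$ as $\eta'\to0$ and $\epsilon\to0$. The main obstacle is the localization step, namely justifying rigorously that the concatenated filtration is the $\tau$-HN filtration. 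Everything else is bookkeeping.
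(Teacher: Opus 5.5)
Your proposal is correct and follows the paper's proof: you localize the $\tau$-HN factors of each $\sigma$-HN factor $G_i$ within $\epsilon$ of $\phi_i$, deduce $\d m_{\tau,E}=\sum_i \d m_{\tau,G_i}$ by concatenating filtrations, and then estimate each cluster as in the semistable case. The only minor difference is the mass control: you bound $m_\tau(G_i)$ directly via the mass term of the metric \eqref{E:Brmetric} and use uniform continuity of $f$, whereas the paper sandwiches the integral between $m_\pm(\epsilon)$ times cone-lemma constants times $\lvert Z_\tau(G)\rvert$ (so your version is slightly cleaner and does not need the sign of $f$ handled separately).
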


The following argument was explained to us by the authors of \cite{ThurstonBDL} and greatly simplifies our previous argument.

\begin{proof}
    Consider $\sigma \in \Stab(\cD)$ and suppose that $E$ is $\sigma$-semistable of some phase $\phi$. Let $\epsilon>0$ and $\tau \in \Stab(\cD)$ be given such that $d(\sigma,\tau)<\epsilon$. It follows that $E\in \cP_\tau(\phi-\epsilon,\phi+\epsilon)$. Let $m_+(\epsilon) := \max\{f(\theta):\theta \in [\phi-\epsilon,\phi+\epsilon]\}$ and define $m_-(\epsilon)$ analogously. Then, letting $\theta_1>\cdots>\theta_m$ denote the phases of the $\tau$-HN factors of $E$, we have
    \begin{align*}
        \int_{\bf{R}} f\:\d m_{\tau,E} = \int_{\phi-\epsilon}^{\phi+\epsilon}f\:\d m_{\tau,E} & = \sum_{i=1}^m f(\theta_i) \cdot \lvert Z_\tau(\HN_i^\tau(E))\rvert \\
        &\le m_+(\epsilon) \cdot \sum_{i=1}^m \lvert Z_\tau(\HN_i^\tau(E))\rvert \\
        & \le m_+(\epsilon) \cdot C(\epsilon) \cdot \lvert Z_\tau(E)\rvert
    \end{align*}
    where $C(\epsilon) \to 1$ as $\epsilon \to 0$. We obtain the constant $C(\epsilon)$ by applying \Cref{L:conevector} to $\{Z_\tau(\HN_i^\tau(E))\}_{i=1}^m$. Using similar reasoning, we obtain an analogous lower bound:
    \[
        m_-(\epsilon)\cdot c(\epsilon)\cdot \lvert Z_\tau(E)\rvert \le \int_{\bf{R}} f\:\d m_{\tau,E} \le m_+(\epsilon)\cdot C(\epsilon)\cdot \lvert Z_\tau(E)\rvert.
    \]
    As $\epsilon \to 0$, we have $m_{\pm}(\epsilon) \to f(\phi)$ and continuity at $\sigma$ follows, since by definition $\int_{\bf{R}} f\:\d m_{\sigma,E} = f(\theta) \cdot m_\sigma(E) = f(\theta)\cdot \lvert Z_\sigma(E)\rvert$ for $E$ semistable.

    Next, let $E$ be a non-semistable object of $\cD$ and let $\epsilon := \tfrac{1}{2}\min_i\left\{\lvert \HN_i^\sigma(E) - \HN_{i+1}^\sigma(E)\rvert\right\}$. Choose $\tau \in \Stab(\cD)$ such that $d(\sigma,\tau)<\epsilon$. It follows that 
    \[
        \d m_{\tau,E} = \sum_{i=1}^m \d m_{\tau,\HN_i^\sigma(E)}
    \]
    and the result now follows from the semistable case.
\end{proof}

\begin{thm}
\label{T:jointlycontinuous}
    Consider a non-empty subspace $\Omega \subset \bf{R}$ and $F\in \mathscr{C}^0(\Omega\times \bR)$. Then, for any $E\in \mathfrak{d}$, the map $\Omega \times \Stab(\cD)\to \bR$ given by
    \[
    (t,\sigma)\mapsto \int_{\bf{R}} F(t,\theta)\:\d m_{\sigma,E}(\theta)
    \]
    is continuous.
\end{thm}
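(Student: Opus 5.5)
We prove continuity at a fixed point $(t_0,\sigma)\in \Omega\times\Stab(\cD)$ by adapting the argument of \Cref{P:continuity}, using a uniform modulus of continuity for $F$ near the finitely many relevant points. We split
\[
\Big|\int F(t,\theta)\,\d m_{\tau,E} - \int F(t_0,\theta)\,\d m_{\sigma,E}\Big| \le \int |F(t,\theta)-F(t_0,\theta)|\,\d m_{\tau,E}(\theta) + \Big|\int F(t_0,\theta)\,\d m_{\tau,E} - \int F(t_0,\theta)\,\d m_{\sigma,E}\Big|.
\]
The second term tends to $0$ as $\tau\to\sigma$ by \Cref{P:continuity}, applied to $f=F(t_0,-)$. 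This requires $f$ to be continuous on all of $\bf{R}$, which it is, since $F(t_0,-)$ is the restriction of $F$ to the slice $\{t_0\}\times\bf{R}$.

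For the first term, write the $\sigma$-HN phases of $E$ as $\phi_1>\cdots>\phi_n$ and fix $0<\epsilon<1$. If $d(\sigma,\tau)<\epsilon$, then every $\tau$-HN phase of $E$ lies in the compact set
\[
K_\epsilon=\bigcup_i[\phi_i-\epsilon,\phi_i+\epsilon].
\]
This follows because $\phi^\pm_\tau(E)$ and the phases of the HN factors $\HN_i^\sigma(E)$ move by less than $\epsilon$, exactly as in the proof of \Cref{P:continuity}. Indeed, once $\epsilon$ is smaller than half the minimal gap, $\d m_{\tau,E}=\sum_i \d m_{\tau,\HN_i^\sigma(E)}$, and each summand is supported in $[\phi_i-\epsilon,\phi_i+\epsilon]$. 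Consequently the first term is at most
\[
\sup_{\theta\in K_\epsilon}|F(t,\theta)-F(t_0,\theta)|\cdot m_\tau(E).
\]

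The key step is to make this supremum small uniformly in $\theta$. Since $F$ is continuous on $\Omega\times\bf{R}$ and $\{t_0\}\times K_\epsilon$ is compact, a tube-lemma argument gives the following: for every $\eta>0$ there is $\delta>0$ with $|F(t,\theta)-F(t_0,\theta)|<\eta$ whenever $t\in\Omega$, $|t-t_0|<\delta$ and $\theta\in K_\epsilon$. To prove this, cover $\{t_0\}\times K_\epsilon$ by finitely many product neighborhoods on which $F$ oscillates by less than $\eta/2$, and take $\delta$ to be the minimum of their $t$-radii. Only continuity on the subspace $\Omega$ is used, so no condition on $\Omega$ (such as openness or local compactness) is needed beyond its being a metric subspace of $\bf{R}$.

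Finally, we bound $m_\tau(E)$ uniformly near $\sigma$: by the definition \eqref{E:Brmetric} of the metric, $m_\tau(E)\le e^{\epsilon}m_\sigma(E)$. Combining the pieces, the first term is at most $\eta e^\epsilon m_\sigma(E)$ and the second tends to $0$, which gives joint continuity. The main obstacle is the uniformity step, namely controlling $F(t,-)-F(t_0,-)$ uniformly on the supports of all nearby $\d m_{\tau,E}$. This is handled by confining those supports to the fixed compact set $K_\epsilon$ and using compactness of $\{t_0\}\times K_\epsilon$.
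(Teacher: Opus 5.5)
Your proposal is correct and follows the paper's proof essentially step for step. It uses the same splitting into a term handled by \Cref{P:continuity} at the fixed parameter $t_0$ and a term bounded by $\sup_{\theta\in K}|F(t,\theta)-F(t_0,\theta)|\cdot e^{\epsilon}m_\sigma(E)$, where $K$ is a fixed compact set containing the supports of all nearby $\d m_{\tau,E}$. Your tube-lemma step just makes explicit why the paper's quantity $M_d$ tends to $0$ for an arbitrary subspace $\Omega$, where the paper's ``max'' should be read as a supremum over $t\in\Omega$.
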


\begin{proof}
    Fix $(s,\sigma)\in \Omega \times \Stab(\cD)$ and consider $(t,\tau)$ with $d := \max\{\lvert s-t\rvert, d(\sigma,\tau)\}$. We have 
    \begin{align*}
     \left\lvert \int F(s,\theta)\:\d m_{\sigma,E} - \int F(t,\theta)\:\d m_{\tau,E}\right \rvert & \le \left \lvert \int F(s,\theta)(\d m_{\sigma,E} - \d m_{\tau,E})\right \rvert  + \left\lvert \int F(s,\theta) - F(t,\theta) \: \d m_{\tau,E} \right \rvert
    \end{align*}
    where the first term on the right hand side tends to zero as $d(\sigma,\tau)\to 0$ by \Cref{P:continuity}. For the other term, note that there exists a compact set $K\subset \bf{R}$ such that $\supp(\d m_{\tau,E})\subset K$ for all $\tau$ with $d(\sigma,\tau)\le d$. Let $M_d = \max\{\lvert F(s,\theta) - F(t,\theta)\rvert: t\in [s-d,s+d], \theta \in K\}$. Then 
    \[
    \left\lvert \int F(s,\theta) - F(t,\theta)\: \d m_{\tau,E} \right\rvert \le \int \lvert F(s,\theta) - F(t,\theta)\rvert \:\d m_{\tau,E} \le M_d\cdot m_\sigma(E)\cdot e^d
    \]
    where $M_d\to 0$ as $d\to 0$.
\end{proof}

\begin{cor}
\label{C:continuityoffunctions}
    For any $E\in \mathfrak{d}$, the functions \vspace{-2mm}
    \begin{enumerate}
        \item $(t,\sigma)\mapsto m_\sigma^t(E)$; and \vspace{-2mm}
        \item $(t,\sigma)\mapsto \phi_\sigma^t(E)$\vspace{-2mm}
    \end{enumerate}
    define continuous maps $\bR \times \Stab(\cD)\to \bR$.
\end{cor}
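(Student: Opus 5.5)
Part (1) is immediate from \Cref{T:jointlycontinuous}. We apply it with $\Omega = \bf{R}$ and $F(t,\theta) = e^{t\theta}$, which is continuous on $\bf{R}\times\bf{R}$. By \Cref{D:massmeasures}, $\int_{\bf{R}} F(t,\theta)\,\d m_{\sigma,E}(\theta) = m_\sigma^t(E)$, so $(t,\sigma)\mapsto m^t_\sigma(E)$ is continuous.

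For part (2), the plan is to write $\phi^t_\sigma(E)$ as a single integral against $\d m_{\sigma,E}$ with a continuous kernel, avoiding the case distinction at $t=0$. Define
\[
  g(t,\theta) := \begin{cases} \dfrac{e^{t\theta}-1}{t} & t\neq 0,\\ \theta & t=0.\end{cases}
\]
The function $g$ is continuous on $\bf{R}^2$: it equals $\int_0^1 \theta e^{st\theta}\,\d s$, and the integrand is jointly continuous. \Cref{T:jointlycontinuous}, applied with $\Omega=\bf{R}$ and $F=g$, then shows that
\[
  A(t,\sigma) := \int_{\bf{R}} g(t,\theta)\,\d m_{\sigma,E}
\]
is continuous. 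The same theorem with $F\equiv 1$ (or part (1) at $t=0$) shows that $m_\sigma(E)$ is continuous and positive.

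Set $u(t,\sigma) := A(t,\sigma)/m_\sigma(E)$, which is continuous. For $t\neq 0$ we have $m^t(E)/m(E) = 1 + t u$, and so $\phi^t_\sigma(E) = h(t,u)$, where
\[
  h(t,u) := \begin{cases} \dfrac{\log(1+tu)}{t} & t\neq 0,\\ u & t=0.\end{cases}
\]
At $t=0$, $u(0,\sigma)=\phi^0_\sigma(E)$ by definition, so $\phi^t_\sigma(E)=h(t,u(t,\sigma))$ holds for all $t$.

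The main point to check is that $h$ is continuous on the domain $\{(t,u): 1+tu>0\}$, which contains all values attained here since $m^t(E)>0$. Away from $t=0$ this is clear. Near a point $(0,u_0)$, write $h(t,u) = u\,\lambda(tu)$, where $\lambda(x)=\log(1+x)/x$ for $x\neq 0$ and $\lambda(0)=1$. The function $\lambda$ is continuous on $(-1,\infty)$, and $tu\to 0$ as $(t,u)\to(0,u_0)$, so $h$ is continuous there. Composing continuous maps gives joint continuity of $(t,\sigma)\mapsto\phi^t_\sigma(E)$.
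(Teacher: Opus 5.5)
Your proof is correct and takes the same route as the paper. The paper simply cites \Cref{T:jointlycontinuous} together with the formulas $m^t_\sigma(E)=\int e^{t\theta}\,\d m_{\sigma,E}$, $\phi^0_\sigma(E)=\frac{1}{m_\sigma(E)}\int\theta\,\d m_{\sigma,E}$, and $\phi^t_\sigma(E)=\frac{1}{t}\log(m^t_\sigma(E)/m_\sigma(E))$ for $t\neq 0$.

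Read literally, those formulas only give continuity on $\{t\neq 0\}\times\Stab(\cD)$ and along the slice $\{0\}\times\Stab(\cD)$. Your kernel $g(t,\theta)=\int_0^1\theta e^{st\theta}\,\d s$ and the factorization $h(t,u)=u\,\lambda(tu)$ supply the joint continuity at the points $(0,\sigma)$, which the paper leaves implicit. Your write-up is therefore, if anything, the more complete one.
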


\begin{proof}
    This follows from \Cref{T:jointlycontinuous}, since $m_\sigma^t(E) = \int e^{t\theta}\:\d m_{\sigma,E}(\theta)$, $\phi^0_\sigma(E) = \int \theta \:\d m_{\sigma,E}(\theta)$, and $\phi^t_\sigma(E) = \tfrac{1}{t}\log(m^t_\sigma(E)/m_\sigma(E))$ for $t\ne 0$.
\end{proof}

We write $\Stab_\Lambda(\cD)$ for the subspace of stability conditions satisfying the support property with respect to some map $v:\rm{K}_0(\cD)\to \Lambda$ as in \Cref{R:weakenedhypothesis}. \Cref{C:continuityoffunctions} has the following pleasant corollary: 

\begin{cor}
    The quotient map $\pi:\Stab_\Lambda(\cD)\to \Stab_\Lambda(\cD)/\bf{C}$ is a topologically trivial holo\-morphic principal $\bf{C}$-bundle. 
\end{cor}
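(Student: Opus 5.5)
Fix once and for all a nonzero object $E_0$ of $\cD$, and let $\ell^0(E_0)$ denote the continuous map $\sigma\mapsto \ell^0_\sigma(E_0)=\log m_\sigma(E_0)+i\pi\phi_\sigma(E_0)$. This map is continuous by \Cref{C:continuityoffunctions}, and by \eqref{E:translation} it is $\bf{C}$-equivariant: $\ell^0_{z\cdot\sigma}(E_0)=\ell^0_\sigma(E_0)+z$. The plan is to use it as a global trivialization. Define
\[
    \Phi:\Stab_\Lambda(\cD)\to \bigl(\Stab_\Lambda(\cD)/\bf{C}\bigr)\times \bf{C},\qquad \sigma\mapsto \bigl(\pi(\sigma),\ell^0_\sigma(E_0)\bigr).
\]
This map is continuous and $\bf{C}$-equivariant, with $\bf{C}$ acting on the second factor by translation. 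Its inverse is $([\sigma],w)\mapsto (w-\ell^0_\sigma(E_0))\cdot\sigma$. This formula does not depend on the choice of representative $\sigma$, again by \eqref{E:translation}.

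The first step is to show that the inverse is continuous. Equivalently, the map $s:\Stab_\Lambda(\cD)\to\Stab_\Lambda(\cD)$, $\sigma\mapsto(-\ell^0_\sigma(E_0))\cdot\sigma$, must be continuous and must descend to a continuous section of $\pi$. Continuity of $s$ follows from two facts. The action map $\bf{C}\times\Stab\to\Stab$ is continuous, which is standard from the metric \eqref{E:Brmetric} (see \cite{Br07}). And $\ell^0(E_0)$ is continuous. Since $s$ is constant on $\bf{C}$-orbits, it factors through the quotient topology on $\Stab_\Lambda(\cD)/\bf{C}$. This gives a continuous section $\bar s$ whose image is the slice $\{\ell^0(E_0)=0\}$. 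Then $\Phi^{-1}([\sigma],w)=w\cdot\bar s([\sigma])$ is continuous, so $\Phi$ is a homeomorphism and the bundle is topologically trivial.

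The second step is to handle the holomorphic structure. On $\Stab_\Lambda(\cD)$ the complex structure comes from the local homeomorphism $\sigma\mapsto Z_\sigma\in\Hom(\Lambda,\bf{C})$. The $\bf{C}$-action is holomorphic, since locally it is $Z\mapsto e^wZ$, and it is free by \cite{Br07}*{Lem. 8.2}. The quotient is then a complex manifold locally modeled on $\bP(\Hom(\Lambda,\bf{C}))$, the $\bf{C}^*$-quotient taken in a chart. It remains to check properness of the action, or at least Hausdorffness of the quotient. This follows from the existence of the continuous equivariant map $\ell^0(E_0)$ to $\bf{C}$: if $z_n\cdot\sigma_n\to\tau$ and $\sigma_n\to\sigma$, then $z_n=\ell^0_{z_n\sigma_n}(E_0)-\ell^0_{\sigma_n}(E_0)$ converges, so the action is proper.

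The main obstacle is that $\ell^0(E_0)$ is only continuous, not holomorphic. The topological trivialization $\Phi$ is therefore not a holomorphic trivialization, and the statement only claims topological triviality. What I would prove is that $\pi$ is a holomorphic principal bundle, using holomorphic local sections. Near $\sigma$, pick $E$ with $Z_\sigma(E)\ne0$ and take the slice $\{Z(E)=Z_\sigma(E)\}$ together with a local choice of phase. This is a holomorphic local section, and the transition functions are the holomorphic functions $\log(Z(E)/Z(E'))$, with branches fixed by the phases. Separately, the continuous global section $\bar s$ shows that the bundle is topologically trivial. I expect the properness and Hausdorff check above to be the only genuinely delicate point.
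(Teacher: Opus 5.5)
Your proposal is correct and follows essentially the paper's route: topological triviality comes from the global continuous section cut out by the equivariant function $\ell^0(E_0)$ (continuous by \Cref{C:continuityoffunctions}, equivariant by \eqref{E:translation}), and the holomorphic structure comes from local slices on which $Z(E)$ is constant. The differences are minor. You prove continuity of the section directly from continuity of the action and the universal property of the quotient topology, which also gives properness and Hausdorffness at once; the paper instead checks continuity in its local trivializations $\pi^{-1}(U)\cong U\times\mathbf{C}$. The paper also takes $E$ stable near $\sigma$, which the support property allows, so that its phase canonically removes the $2\pi i\mathbf{Z}$ ambiguity (note that $\sigma$ and $\sigma[2]$ have the same central charge); your \emph{local choice of phase} for an arbitrary $E$ has to remove that ambiguity by shrinking the neighborhood.
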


\begin{proof}
    The action of $\bf{C}$ on $\Stab_\Lambda(\cD)$ is holomorphic and free. To see that $\pi$ is a holomorphic principal $\bf{C}$-bundle, let $x\in \Stab_\Lambda(\cD)/\bf{C}$ be given and consider $E\in \mathfrak{d}$ which is stable with respect to some, hence any, $\sigma \in x$. Choose a lift of $x$ to $\sigma \in \Stab_\Lambda(\cD)$. The support property implies that there is an open neighborhood $V$ of $\sigma$ on which $E$ is stable. Since $\pi$ is a quotient map, $\pi(V) =: U$ is open, and thus there exists an open set containing $x$ on which $E$ is stable. Next, define a section $U\to \Stab_\Lambda(\cD)$ by sending $y\in U$ to its unique representative $\sigma(y)$ such that $\logZ_{\sigma(y)}(E) = 0 + i\pi$. A coordinate calculation verifies that this map is holomorphic, so that $\pi$ is a holomorphic principal $\bf{C}$-bundle.
    
    For topological triviality, it suffices to construct a continuous section of $\pi$. Choose $F\in \mathfrak{d}$ and define a section $s_F$ of $\pi$ by sending $x\in \Stab_\Lambda(\cD)/\bf{C}$ to its unique representative $\sigma$ such that $\ell_\sigma(F) = 0 + i\pi$, where $\phi_\sigma(F)$ denotes the \emph{average} phase. To prove continuity, let $x$, $U$, and $E$ be as before, so that we have a trivialization $\pi^{-1}(U) \cong U\times \bf{C}$ as above. In this trivialization, $s_F(y) = (y,\ell_y(F) - \logZ_y(E))$, where we note that $\ell_y(F) - \logZ_y(E)$ is well-defined by \eqref{E:translation} and continuous by \Cref{C:continuityoffunctions}.
\end{proof}

Next, equip $\mathscr{M}(\bf{R})$ with the weak topology defined by $\mathscr{C}^0(\bf{R})$. That is, $(\mu_k)_{k\ge 1}\to \mu$ if and only if $\int f\:\d\mu_k\to \int f\:\d\mu$ for all $f\in \mathscr{C}^0(\bf{R})$. We consider $\mathscr{M}(\bf{R})^{\mathfrak{d}}$ with the induced product topology:

\begin{cor}
\label{C:continuousmaptomeasures}
    The map $\rm{d}m:\Stab(\cD) \to \mathscr{M}(\bf{R})^\mathfrak{d}$ is continuous.
\end{cor}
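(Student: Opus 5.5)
The plan is to reduce the statement to \Cref{P:continuity} using the universal property of the two topologies involved. By definition, $\mathscr{M}(\bf{R})^{\mathfrak{d}}$ carries the product topology. A map into a product is continuous if and only if each coordinate projection composed with it is continuous. So it suffices to show that for every fixed $E\in\mathfrak{d}$ the map
\[
    \Stab(\cD)\to\mathscr{M}(\bf{R}), \qquad \sigma\mapsto \d m_{\sigma,E},
\]
is continuous when $\mathscr{M}(\bf{R})$ has the weak topology.

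Next I would unwind the weak topology. It is the coarsest topology making the evaluation maps $\mu\mapsto\int f\,\d\mu$ continuous for every $f\in\mathscr{C}^0(\bf{R})$. Its subbasic opens are the preimages of open subsets of $\bf{R}$ under these evaluations. A map $g$ into $\mathscr{M}(\bf{R})$ is therefore continuous if and only if $\mu\mapsto\int f\,\d\mu$ composed with $g$ is continuous for every $f$. For $g(\sigma)=\d m_{\sigma,E}$ this composite is exactly $\sigma\mapsto\int f\,\d m_{\sigma,E}$, which is continuous by \Cref{P:continuity}. Combining the two reductions proves the corollary.

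The only point needing care is the mismatch between the sequential description of the weak topology given in the text ($\mu_k\to\mu$ iff $\int f\,\d\mu_k\to\int f\,\d\mu$) and the initial-topology description used above. I would take the initial topology as the definition, so that the sequential statement is a consequence. In any case this causes no difficulty: $\Stab(\cD)$ is a (generalized) metric space, so continuity can equally be checked on sequences. If $\sigma_k\to\sigma$, then \Cref{P:continuity} gives $\int f\,\d m_{\sigma_k,E}\to\int f\,\d m_{\sigma,E}$ for every $f$ and every $E$, which is exactly convergence in the product of weak topologies. No real obstacle is expected.
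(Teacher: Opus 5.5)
Your proposal is correct and matches the paper's proof. The paper likewise reduces continuity into the product of weak topologies to the coordinatewise statement that $\sigma\mapsto\int f\,\d m_{\sigma,E}$ is continuous for each $E$ and $f$, which is \Cref{P:continuity}; it phrases this with sequences, which is valid for the reason you give, since the source is metric and convergence in an initial topology is checked on each defining map.
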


\begin{proof}
    The claim is equivalent to showing that for any given $E\in \mathfrak{d}$, any $f\in \mathscr{C}^0(\bf{R})$, and any convergent sequence $(\sigma_k)\to \sigma$ in $\Stab(\cD)$, we have $\lim_{k\to\infty} \int f\:\d m_{\sigma_k,E} = \int f\:\d m_{\sigma,E}$. This is precisely \Cref{P:continuity}.
\end{proof}

Next, we have a commutative diagram of maps, which are continuous by \Cref{P:continuity} and \Cref{C:continuousmaptomeasures}:
\begin{equation} 
\label{E:elldefinition}
    \begin{tikzcd}
        \Stab(\cD) \arrow[dr,dashed,"\ell^0",swap] \arrow[r,"\mathrm{d}m"] & \mathscr{M}(\bf{R})^\mathfrak{d}\arrow[d,"\log\int_{\bf{R}} (-) + i\pi \int_{\bf{R}}\theta(-)"]\\
        & \bf{C}^{\mathfrak{d}}.
    \end{tikzcd}
\end{equation}
For $t\ne 0$, we obtain a similar diagram for $\ell^t$ after setting the vertical map to be $\mu \mapsto \log \int_{\bf{R}}\d\mu + \tfrac{i\pi}{t} \log (\int_{\bf{R}}e^{t\theta}\:\d \mu /\int_{\bf{R}} \d\mu)$. See \eqref{E:ellt} for the definition of $\ell^t$. Note that the additive group $\bf{C}$ acts on $\bf{C}^{\mathfrak{d}}$ by $w\cdot (z_E)_{E \in \mathfrak{d}} = (w+z_E)_{E \in \mathfrak{d}}$.

\begin{thm}
\label{T:measuretheoreticembedding}
    For each $t\in \bf{R}$, the map $\ell^t:\Stab(\cD) \to \bf{C}^{\mathfrak{d}}$ is a continuous injection and induces a continuous injection $\overline{\ell}^t:\Stab(\cD)/\bf{C} \to \bf{C}^{\mathfrak{d}}/\bf{C}$ such that the following diagram commutes:
    \[
        \begin{tikzcd}
            \Stab(\cD) \arrow[r,"\ell^t",hook]\arrow[d]& \bf{C}^{\mathfrak{d}}\arrow[d,"\pi"]\\
            \Stab(\cD)/\bf{C} \arrow[r,"\overline{\ell}^t",hook]& \bf{C}^{\mathfrak{d}}/\bf{C}.
        \end{tikzcd}
    \]
    Consequently, $\d m:\Stab(\cD) \to \mathscr{M}(\bf{R})^{\mathfrak{d}}$ is injective.
\end{thm}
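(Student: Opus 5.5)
Continuity of $\ell^t$ is immediate from \Cref{C:continuityoffunctions}: each component $\ell^t_\sigma(E)=\log m_\sigma(E)+i\pi\phi^t_\sigma(E)$ is continuous in $\sigma$, so $\ell^t$ is continuous into the product topology. Equivariance \eqref{E:translation} shows that $\ell^t$ descends to a well-defined continuous map $\overline{\ell}^t$ on the quotient, making the square commute, since the quotient topology on $\Stab(\cD)/\bf{C}$ is universal for maps constant on orbits. The statement about $\d m$ follows from injectivity of $\ell^t$, because $\ell^t$ factors through $\d m$ via the diagram \eqref{E:elldefinition} (or its $t\neq 0$ analogue).

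The heart of the proof is injectivity of $\ell^t$. Suppose $\ell^t_\sigma=\ell^t_\tau$ for $\sigma=(Z,\cP)$, $\tau=(W,\cQ)$. Then $m_\sigma(E)=m_\tau(E)$ and $\phi^t_\sigma(E)=\phi^t_\tau(E)$ for all $E$. I would first show that $\cP(\phi)\subseteq\cQ(\phi)$ for every $\phi$; by symmetry the slicings then agree. Once the slicings agree, each semistable $E$ satisfies $|Z(E)|=|W(E)|$ and has the same phase, so $Z(E)=W(E)$. Since $\rm{K}_0(\cD)$ is generated by classes of semistable objects (via HN filtrations), $Z=W$.

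To prove $\cP(\phi)\subseteq\cQ(\phi)$, take $E\in\cP(\phi)$ and use the equality of masses, not just of phases. Mass is subadditive along triangles: for the $\tau$-HN filtration of $E$ with factors $G_i$, we have $m_\sigma(E)\le\sum m_\sigma(G_i)$. This is the $t=0$ case of \Cref{T:introD}, or more elementarily follows by summing $\sigma$-masses of the $\sigma$-HN factors of each $G_i$. On the other hand, $m_\sigma(E)=|Z(E)|=|\sum Z(G_i)|$. I would compare this with $\sum m_\tau(G_i)=\sum m_\sigma(G_i)$ and exploit strict convexity: if $E$ is not $\tau$-semistable, then strictness should arise from the fact that the phases are distinct.

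A cleaner route uses phases. By \Cref{P:firstproperties}(3), $\phi^t$ is constant in $t$ exactly for semistable objects. But the hypothesis fixes only one $t$, so that route fails directly. Instead, I would apply $\phi^t_\sigma=\phi^t_\tau$ to the objects $E\oplus E[k]$ and to truncations of $E$, which let me probe the measure. The more robust approach is to show that equality of $\ell^t$ on all objects determines $\phi^\pm$. Consider $F=E\oplus A$ with $A$ semistable of varying phase: the function $\phi^t(E\oplus A)$ is determined by the measure. Then $m(E)$ and $m^t(E)$, recovered from $\ell^t$, together with direct sums $E\oplus A[n]$, give $m^t_\sigma$ on enough objects to recover $\phi^{\pm}$ via $n\to\pm\infty$ limits. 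Hence $d(\sigma,\tau)=0$ by \eqref{E:Brmetric}, so $\sigma=\tau$.

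The main obstacle is this injectivity step: extracting the full slicing from the single functionals $m$ and $\phi^t$. I would try first the direct-sum/shift trick to recover $\phi^\pm$, falling back on the convexity argument with \Cref{L:conevector} for semistable objects.
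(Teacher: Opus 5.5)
Your arguments for continuity, for descent to $\overline{\ell}^t$, and for deducing injectivity of $\d m$ are fine. You never show that $\overline{\ell}^t$ is injective, but this follows in one line from injectivity and equivariance of $\ell^t$: if $\ell^t_\sigma=\ell^t_\tau+w$, then $\ell^t_\sigma=\ell^t_{w\cdot\tau}$ by \eqref{E:translation}, so $\sigma=w\cdot\tau$. The genuine gap is injectivity of $\ell^t$ itself, and neither of your mechanisms supplies it.

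\emph{The direct-sum/shift trick.} It carries no new information. The mass measure of $E\oplus A[n]$ is $\d m_E$ plus the translate of $\d m_A$ by $n$. Hence $m(E\oplus A[n])=m(E)+m(A)$ and $m^t(E\oplus A[n])=m^t(E)+e^{tn}m^t(A)$, so $\ell^t(E\oplus A[n])$ is a function of $\ell^t(E)$, $\ell^t(A)$ and $n$. The limits $n\to\pm\infty$ therefore only see $m(E)$ and $m^t(E)$, never $\phi^\pm(E)$. Truncations do not help either: they depend on the slicing, so they cannot be used symmetrically for $\sigma$ and $\tau$.

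\emph{The convexity route.} From $m_\sigma(E)=\sum_i m_\sigma(G_i)$ and $|\sum Z(G_i)|\le\sum|Z(G_i)|\le\sum m_\sigma(G_i)$ you only learn that all the relevant $Z$-values are positively proportional. Distinctness of the $\tau$-phases of the $G_i$ says nothing about the arguments of the $Z(G_i)$, so no strictness arises. In fact no mass argument can work for merely semistable objects. If $A,B\in\cP_\sigma(\phi)$, then mass is additive along the nontrivial triangle $A\to A\oplus B\to B$. Also, \Cref{L:conevector} gives only a lower bound, not strictness.

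\emph{The missing idea.} Pass to \emph{stable} objects and prove an intrinsic characterization: $F$ is $\sigma$-stable if and only if every triangle $E\to F\to G$ with $m(F)=m(E)+m(G)$ has $E\cong0$ or $G\cong 0$. The paper proves the nontrivial direction as follows.
\begin{enumerate}
\item Normalize so that $\phi(F)=1$.
\item Use the long exact homology sequence in $\cP_\sigma(0,1]$ and simplicity of $F$ to obtain $0\to H_1(G)\to H_0(E)\to F\to 0$, or the dual sequence.
\item Use \Cref{L:triangle_inequality} to force all other homology to vanish.
\item Observe that the concatenated HN curves of $H_1(G)$ and $H_0(E)$ form a path strictly longer than the segment of length $|Z(F)|$ unless $H_1(G)=0$.
\end{enumerate}
This characterization involves only the mass function, so $\sigma$ and $\tau$ have the same stable objects. \Cref{P:firstproperties} gives $\phi^t=\phi$ on semistable objects, so the phases of stable objects agree. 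Jordan--H\"older filtrations, which exist because each $\cP(\phi)$ is finite length, then recover the semistable objects and hence the slicing. Finally $Z$ is recovered exactly as you describe.
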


\begin{proof}
    We already know that $\ell^t$ is continuous by \Cref{C:continuousmaptomeasures}. We verify that $\ell^t$ is injective. We first show that an object $F$ of $\cD$ is stable if and only if the only exact triangles $E \to F \to G$ with $m_\sigma(F)=m_\sigma(E)+m_\sigma(G)$ have either $E \cong 0$ or $G \cong 0$. First, assume $F$ has this property. The mass function is additive on the Jordan--H\"older filtration of $F$, so $F$ must have trivial JH filtration, and is thus stable. 

    Conversely, assume $F$ is stable and that there is an exact triangle $E \to F \to G$ on which $m_\sigma$ is additive. Up to shifting phases, we may assume that $\phi(F)=1$. The long exact homology sequence for the triangle, with respect to the heart $\cP_\sigma(0,1]$ gives an exact sequence 
\[
    0 \to H_1(G) \to H_0(E) \to F \to H_0(G) \to H_{-1}(E) \to 0,
\]
along with isomorphisms $H_i(E) \cong H_{i+1}(G)$ for $i \neq 0,-1$. Because $F$ is simple of phase 1, $F$ has no non-trivial subobjects in $\cP_\sigma(0,1]$, so the map $H_0(E) \to F$ is either $0$ or surjective. We therefore have either $0 \to H_1(G) \to H_0(E) \to F \to 0$ or $0 \to F \to H_0(G) \to H_{-1}(E) \to 0$. We will explain the remainder of the proof in the case where the first short exact sequence holds and leave the second case for the reader, because the argument there is nearly identical.

The hypothesis that $m_\sigma(F) = m_\sigma(E)+m_\sigma(G)$ and the triangle inequality (\Cref{L:triangle_inequality}) imply that $m_\sigma(F) = m_\sigma(H_0(E))+m_\sigma(H_1(G))$ and that all other homology groups of $E$ and $G$ vanish.\endnote{The triangle inequality for masses implies $m_\sigma(F) \leq m_\sigma(H_0(E))+m_\sigma(H_1(G))$, and the right hand side is $\leq m_\sigma(E)+m_\sigma(G)$, so the equality $m_\sigma(F)=m_\sigma(E)+m_\sigma(G)$ implies that both inequalities are also equalities.} One can concatenate the HN curves\footnote{Given a non-zero object $A$ of the heart $\cP_\sigma(0,1]$ with HN factors $\HN_\sigma^1(A),\ldots, \HN_\sigma^k(A)$, the associated HN curve is the piecewise linear path beginning at the origin defined by the points $0,Z(\HN_\sigma^1(A)),\ldots, Z(\HN_\sigma^k(A))$. See the black curve in \Cref{fig:truncated_triangle} for a visualization.} of $H_1(G)$ and $H_0(E)$ to obtain a piecewise linear path connecting $Z(H_1(G))$ and $Z(H_0(E))$ in $\bC$, and whose length in $m_\sigma(H_0(E))+m_\sigma(H_1(G))$. On the other hand, the HN curve of $F$ is a straight line connecting those two points, so one must have strict inequality $m_\sigma(F) < m_\sigma(H_0(E))+m_\sigma(H_1(G))$ unless the HN curve of $H_1(G)$ is trivial, i.e., $G=0$.

So, we can recover the stable objects from the mass function $m_\sigma(E) = |e^{\ell_\sigma^t(E)}|$. The function $\phi_\sigma^t(F) = \frac{1}{\pi} \Im(\ell_\sigma^t(F))$ recovers the phases of stable objects, since by \Cref{P:firstproperties} we have $\phi_\sigma^t(F) = \phi_\sigma(F)$ for all (semi)stable $F$. The semistable objects are those that can be constructed by finitely many iterated extensions of stable objects of the same phase. Thus, we recover the slicing $\cP$ of $\sigma$. Finally, for any object $F$ we have $Z_\sigma(F) = \sum_j m_\sigma(F) e^{i \pi \phi_\sigma(G_j)}$, where $G_1,\ldots,G_n$ are the HN factors of $F$.    

By \eqref{E:translation}, $\ell_{z\cdot \sigma}^t(E) = \ell_\sigma^t(E) + z$ for any $z\in \bf{C}$. Thus, there is an induced continuous map $\overline{\ell}^t:\Stab(\cD)/\bf{C}\to \bf{C}^{\mathfrak{d}}/\bf{C}$. To see that $\overline{\ell}^t$ is injective, choose $F\in \mathfrak{d}$ and note that there is a section $s_F$ of $\Stab(\cD)\to \Stab(\cD)/\bf{C}$ given by sending $\overline{\sigma} \in \Stab(\cD)/\bf{C}$ to its unique representative $\tau$ with $\ell_\tau(F) = 1$. Then, $\overline{\ell}^t = \pi\circ \ell^t\circ s_F$ and the result follows from the fact that $\pi$ is injective on the subspace of $\bf{C}^{\mathfrak{d}}$ where the coordinate corresponding to $F$ equals one. Finally, since $\ell = \alpha \circ \d m$, for $\alpha: \mathscr{M}(\bf{R})^{\mathfrak{d}} \to \bf{C}^{\mathfrak{d}}$ as in \eqref{E:elldefinition}, it follows from injectivity of $\ell$ that $\d m$ is injective.
\end{proof}

\begin{rem}
    Composing $\ell:\Stab(\cD) \to \bf{C}^{\mathfrak{d}}$ with the projection $\bf{C}^{\mathfrak{d}}\xrightarrow{\ev_{E}} \bf{C}$ recovers the log central charge function $\sigma \mapsto \ell_\sigma(E)$. When $E$ is $\sigma$-semistable, $\ell_\sigma(E) = \logZ_\sigma(E) := \log m_\sigma(E) + i\pi \phi_\sigma(E)$. 

    Next, suppose $\sigma$ is a stability condition satisfying the support property with respect to a homomorphism $v:\rm{K}_0(\cD) \to \Lambda$, with $\Lambda$ a finite rank Abelian group. The strengthened version of Bridgeland's deformation theorem in \cite{Bayer_short} can be interpreted as saying that there exist an open neighborhood $U$ of $\sigma$ and a finite collection of objects $\mathfrak{e} = \{E_1,\ldots, E_n\}$, stable for all $\tau \in U$, such that $U\xrightarrow{\ell} \bf{C}^{\mathfrak{d}} \twoheadrightarrow \bf{C}^{\mathfrak{e}}$ is a homeomorphism onto its image.
\end{rem}

\begin{rem}
    Bridgeland \cite{Br07} equips $\Stab(\cD)$ with a canonical metric topology using \eqref{E:Brmetric}. In view of \Cref{T:measuretheoreticembedding}, one can instead equip $\Stab(\cD)$ with the subspace topology inherited from $\bf{C}^{\mathfrak{d}}$. We call this topology the \emph{weak} topology on $\Stab(\cD)$, since \Cref{T:measuretheoreticembedding} implies that it is weaker than the metric topology on $\Stab(\cD)$. Note that the weak topology is nevertheless Hausdorff, since the product topology on $\bf{C}^{\mathfrak{d}}$ is.
\end{rem}

\begin{prop}
\label{P:weakvsstrong}
    Let $\cD_\infty = \DCoh(\pt)^{\oplus \bf{Z}}$. The weak topology on $\Stab(\cD_\infty)$ is strictly weaker than the metric topology.
\end{prop}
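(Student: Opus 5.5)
The plan is to write down $\Stab(\cD_\infty)$ explicitly enough to see that the weak topology is a product-type topology while the metric topology is uniform in every coordinate. Then we exhibit a sequence that converges weakly but not in the metric. Since the remark following \Cref{T:measuretheoreticembedding} already shows that the weak topology is weaker than the metric topology, only strictness needs proof.

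\emph{Setup.} Write $S_n$ for the generator $k$ of the $n$-th summand, $n\in\bf{Z}$. Every object of $\cD_\infty$ is a finite direct sum of shifts $S_n[j]$. We have $\rm{K}_0(\cD_\infty)=\bf{Z}^{\oplus\bf{Z}}$, and $\Hom(S_n,S_m[j])=0$ unless $n=m$ and $j=0$.

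\emph{Family of stability conditions.} For any sequence $(z_n)_{n\in\bf{Z}}\in\bf{C}^{\bf{Z}}$, write $z_n=\log m_n+i\pi\phi_n$. Define $\sigma_{(z_n)}$ by setting $Z(S_n)=e^{z_n}$ and letting $\cP(\phi)$ consist of the finite direct sums of the $S_n[j]$ with $\phi_n+j=\phi$. We check the axioms of \Cref{D:stabilitycondition} directly.
\begin{enumerate}
    \item Hom-vanishing holds by the $\Hom$ computation above.
    \item The shift axiom holds by construction.
    \item HN filtrations exist: an object is a finite sum of the $S_n[j]$, so we group the summands by phase and order them decreasingly.
    \item Each $\cP(\phi)$ is a semisimple abelian category whose objects are finite sums of simples, hence it is of finite length.
\end{enumerate}
For this $\sigma$, every nonzero object $E=\bigoplus S_{n}[j]^{\oplus a_{n,j}}$ has mass measure $\sum a_{n,j}m_n\,\delta_{\phi_n+j}$. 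This measure depends only on the finitely many $z_n$ with $n$ in the support of $E$.

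\emph{Weakly convergent sequence.} Fix $\sigma=\sigma_{(0)}$, the stability condition with all $z_n=0$. For each $k$, let $\sigma_k$ be obtained from $\sigma$ by changing only $z_k$ to $1$, i.e.\ scaling $Z(S_k)$ by $e$.
\begin{itemize}
    \item For each fixed $E\in\mathfrak{d}$, the object $E$ involves only finitely many summands. Hence for $k$ large we have $\d m_{\sigma_k,E}=\d m_{\sigma,E}$, and so $\ell^t_{\sigma_k}(E)=\ell^t_\sigma(E)$. This gives $\ell^t(\sigma_k)\to\ell^t(\sigma)$ in the product topology on $\bf{C}^{\mathfrak{d}}$, i.e.\ $\sigma_k\to\sigma$ in the weak topology.
    \item On the other hand, $S_k$ is semistable for both $\sigma$ and $\sigma_k$, with $m_{\sigma_k}(S_k)/m_\sigma(S_k)=e$. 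By \eqref{E:Brmetric}, $d(\sigma_k,\sigma)\ge 1$ for all $k$, so $\sigma_k\not\to\sigma$ in the metric topology.
\end{itemize}
Thus the identity map from the weak to the metric topology is not continuous, so the two topologies differ.

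\emph{Main obstacle.} The only real care is the verification that each $\sigma_{(z_n)}$ is a genuine stability condition in the sense of \Cref{D:stabilitycondition}. This is where infinitely many summands could cause trouble: the support property fails for a rank-$\infty$ lattice, which is exactly why the paper's definition without it is used here. Finite length and HN existence hold objectwise because every object involves only finitely many summands. Beyond that, the argument is just the observation that each coordinate of $\ell^t$ depends on finitely many summands, whereas the metric is a supremum over all of them.
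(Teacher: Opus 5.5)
Your proof is correct and takes the same approach as the paper: identify a stability condition on $\mathcal{D}_\infty$ with a $\mathbf{Z}$-indexed family of central charges on the summands, then take a sequence that converges pointwise but not uniformly. Your specific choice (rescaling only $Z(S_k)$ by $e$) and the explicit bound $d(\sigma_k,\sigma)\ge 1$ just make concrete what the paper leaves implicit.
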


\begin{proof}
    To establish the result, we need to produce a sequence that converges in the weak topology on $\Stab(\cD_\infty)$, but does not converge in the usual metric topology. For this, note that $\Stab(\cD_\infty) = \prod_{\bf{Z}} \Stab(\pt)$, so that we can specify a stability condition $\sigma \in \Stab(\cD_\infty)$ by a collection $(Z_n \in \bf{C} \cong \Hom(\bf{Z},\bf{C}))_{n\in \bf{Z}}$ of central charges on the heart $\rm{Vect}$. Define a sequence $(\sigma_{k})_{k\in \bf{Z}} \in \Stab(\cD_\infty)$ such that $(Z_{n,k})_{k\in \bf{Z}} \to 1$ pointwise, but not uniformly. Thus, the sequence $(\sigma_k)_{k\in \bf{Z}}$ converges in the weak topology, but not in the metric topology.
\end{proof}

The example of \Cref{P:weakvsstrong} is of a somewhat pathological nature. It would be interesting to determine under which circumstances the metric and weak topologies on $\Stab(\cD)$ coincide. 

\section{A Stronger embedding}
\label{S:strongerembedding}

In this section, we assume that stability conditions satisfy the support property with respect to a fixed map $v:\rm{K}_0(\cD)\to \Lambda$. 

Given a smooth real manifold $U$, a codimension one embedded submanifold $W\subset U$ is called a \emph{wall}. Given a collection $\cW = \{W_\alpha\}_{\alpha \in A}$ of walls, the set of $\cW$-chambers is the collection of connected components of 
\[
    U^\circ := U \setminus \bigcup_{\alpha \in A} W_\alpha.
\]
A $\cW$-stratum is any $\cW$-chamber or subspace of the form $\bigcap_{\beta \in B}W_\beta \setminus (\bigcup_{\gamma \in A\setminus B} W_\gamma)$ for $B\subset A$.

\begin{lem}
\label{L:wallandchamber}
    Let $\sigma \in \Stab(\cD)$ and $E\in \mathfrak{d}$ be given. There exists an open set $U$ containing $\sigma$ and a finite set of walls $\cW = \{W_\alpha\}_{\alpha \in A}$ passing through $\sigma$ such that the Harder--Narasimhan filtration of $E$ is constant on each $\cW$-stratum.
\end{lem}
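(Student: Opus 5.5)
Using the support property, I would first reduce to finitely many relevant classes, then take walls to be the loci where phases of those classes align. The smooth structure on $U$ comes from the local homeomorphism $\Stab_\Lambda(\cD)\to \Hom(\Lambda,\bf{C})$ given by $\tau\mapsto Z_\tau$ \cite{Bayer_short}. I would shrink $U$ to a small metric ball around $\sigma$ on which this map is a homeomorphism onto an open subset of the real vector space $\Hom(\Lambda,\bf{C})\cong \bf{R}^{2r}$, where $r$ is the rank of $\Lambda$.

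\emph{Step 1 (finiteness).} Let $G_1,\dots,G_n$ be the $\sigma$-HN factors of $E$, with phases $\phi_1>\dots>\phi_n$. As in the proof of \Cref{P:continuity}, for $\tau$ with $d(\sigma,\tau)<\epsilon$ and $\epsilon$ smaller than half the gaps $\phi_i-\phi_{i+1}$, the $\tau$-HN filtration of $E$ is obtained by concatenating the $\tau$-HN filtrations of the $G_i$. So it suffices to treat each $G_i$, and we may assume $E$ is $\sigma$-semistable of phase $\phi$. Every $\tau$-HN factor $A$ of $E$ then lies in $\cP_\tau(\phi-\epsilon,\phi+\epsilon)$ and satisfies $m_\tau(A)\le m_\tau(E)\le e^\epsilon m_\sigma(E)$. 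The support property gives $\lVert v(A)\rVert\le C\,\lvert Z_\tau(A)\rvert$ uniformly for $\tau$ near $\sigma$ (after shrinking $U$). Hence the classes $v(A)$ lie in a bounded subset of $\Lambda$, that is, in a finite set $S\subset\Lambda$. The same bound applies to classes of $\tau$-semistable subquotients relevant to the HN filtration. This is the standard finiteness behind the local finiteness of walls in Bridgeland's and Bayer's work.

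\emph{Step 2 (walls).} For each pair $\gamma,\gamma'\in S$ that are not proportional, I would let
\[
W_{\gamma,\gamma'}=\{\tau\in U:\ Z_\tau(\gamma)/Z_\tau(\gamma')\in \bf{R}_{>0}\}.
\]
In coordinates $Z\in\Hom(\Lambda,\bf{C})$ this is the vanishing of $\Im(Z(\gamma)\overline{Z(\gamma')})$ together with an open condition. Since $\gamma,\gamma'$ are linearly independent, the differential of $Z\mapsto \Im(Z(\gamma)\overline{Z(\gamma')})$ is nonzero wherever $Z(\gamma),Z(\gamma')\neq 0$, so this is a smooth real hypersurface. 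I keep only those walls passing through $\sigma$, i.e.\ those with $Z_\sigma(\gamma)$ and $Z_\sigma(\gamma')$ of equal phase. Walls not passing through $\sigma$ are avoided by shrinking $U$, since their defining condition then fails strictly at $\sigma$ and hence near $\sigma$. Let $\cW$ be this finite collection.

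\emph{Step 3 (constancy on strata).} On a $\cW$-stratum, the relative order, including ties, of the phases of all classes in $S$ is constant. This is because the phases lie in the window $(\phi-\epsilon,\phi+\epsilon)$ where $\arg$ is continuous, and the order can only change on some $W_{\gamma,\gamma'}$. I would then argue that the HN filtration of $E$ is locally constant along each stratum. Suppose $\tau_0$ lies in a stratum and $A\subset E$ is its first $\tau_0$-HN factor. For $\tau$ nearby in the same stratum, semistability of the relevant objects can only fail by destabilizing via a subobject whose class is in $S$ and whose phase crosses $\phi_\tau(A)$. That crossing does not happen inside the stratum, so the same filtration stays an HN filtration. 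Connectedness of the strata then upgrades local constancy to constancy, after refining strata into connected components if needed; I would either call those components strata or note the statement tolerates this.

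The main obstacle is Step 3. One must justify rigorously that within a stratum no new destabilizing subobject appears, i.e.\ that semistability of a fixed object is an open-and-closed condition along a stratum. I would first try the standard argument that the set where a given object of class in $S$ is $\tau$-semistable is closed, and is open away from walls defined by $S$, relying on \cite{Bayer_short}'s wall-and-chamber structure for a fixed class. I would then apply this inside each stratum, which is the intersection of some walls with the complement of the others.
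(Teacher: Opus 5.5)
Your outline is correct, but it reaches the result by a different route than the paper.

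\textbf{The paper's route.} The paper fixes a compact neighborhood $K$ of $\sigma$. It notes that the objects of mass at most $m_\tau(E)$ for some $\tau\in K$ form a class $\cT$ of bounded mass. It then cites \cite{BridgelandK3}*{Prop. 9.3} to obtain finitely many walls $Z(T_1)\in\bR_{>0}Z(T_2)$, $T_i\in\cT$, off which semistability of the relevant HN factors is constant. This settles the chambers. For a lower-dimensional stratum, it approximates from an adjacent chamber, uses closedness of semistability, and coarsens that chamber's HN filtration by merging factors whose phases coincide.

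\textbf{Your route.} You get the finite set of classes directly from a locally uniform support property. You write the walls explicitly as regular level sets of $\Im(Z(\gamma)\overline{Z(\gamma')})$. You then prove local constancy of the HN filtration along every stratum with a single see-saw argument, and finish with connectedness. This treats chambers and lower strata uniformly, and it avoids both the citation and the coarsening step.

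\textbf{What you still owe.} You must actually prove the crossing claim. Your fallback, that semistability is ``open away from walls'', says nothing on strata of positive codimension. The needed argument is as follows. Suppose $A$ is $\tau_0$-semistable of phase $\psi$ and $B$ is its first $\tau$-HN factor. Then $B\to A\to A/B$ is a short exact sequence in a thin $\tau_0$-slice, which forces $\phi_{\tau_0}(v(B))\le\psi$. On the other hand $\phi_\tau(v(B))>\phi_\tau(v(A))$. So between $\tau_0$ and $\tau$ a tie is broken or an order flips, and neither can happen inside a connected stratum.

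\textbf{Connected components are necessary.} Your passage to connected components is not a convenience; here your version is more careful than the paper's. With alignment walls (yours or the paper's), a stratum can be disconnected and carry different HN filtrations on its pieces. Take the linear $A_3$ quiver oriented $3\to2\to1$ and the uniserial representation $E=(k\to k\to k)$, whose socle is $S_1$ and whose top is $S_3$. Let $\sigma$ lie on the standard heart with $Z(S_1)=Z(S_2)=Z(S_3)=i$. Consider the two points with $Z(S_1)=Z(S_2)=i$ and $Z(S_3)=e^{i\pi(1/2\mp\delta)}$.
\begin{enumerate}
\item On the locus $Z(S_1)=Z(S_2)$, every alignment condition between classes is insensitive to the sign of $\phi(S_3)-\tfrac12$. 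So the two points lie on exactly the same walls, hence in the same stratum.
\item At the first point, $E$ is destabilized by its subobject of class $[S_1]+[S_2]$.
\item At the second point, $E$ is stable.
\end{enumerate}
So the paper's step ``$S$ is in the closure of a chamber $C$'' tacitly requires refining strata into connected components. This refinement is harmless for \Cref{T:strongembedding}: the walls are real-algebraic in the coordinate $Z$, so only finitely many components meet a small neighborhood of $\sigma$.
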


\begin{proof}
    This is based on the ideas of \cite{BridgelandK3}*{Prop. 9.3}. Let $K$ be a compact and connected subset of $\Stab(\cD)$ whose interior contains $\sigma$. As explained in \emph{loc. cit.}, compactness of $K$ implies that $m_\tau(E)/m_\sigma(E)$ is uniformly bounded for all $\tau \in K$. This implies that the class $\cT$ of objects $A$ of $\cD$ such that $m_\tau(A)\le m_\tau(E)$ for some $\tau \in K$ has bounded mass in the sense of \cite{BridgelandK3}*{Def. 9.1}. In particular, $\mathfrak{H} = \{\HN_\bullet^\tau(E):\tau \in K\}$ has bounded mass.
    
    The support property implies that the conclusion of \cite{BridgelandK3}*{Prop. 9.3} holds in this context. Thus, there is a finite collection $\cW' = \{W_\alpha'\}_{\alpha \in A}$ of walls in $K$ defined by the relations $Z(T_1) \in \bf{R}_{>0}\cdot Z(T_2)$ for pairs of object $T_1,T_2 \in \cT$. Furthermore, for any $\cW'$-chamber $C$, if $F \in \mathfrak{H}$ is semistable for some $\tau \in C$, then it is semistable for all other elements of $C$.

    Now, choose an open set $U$ in the interior of $K$ containing $\sigma$. Up to shrinking $U$, we may assume that $W_\alpha' \cap U \ne \varnothing$ implies that $\sigma \in W_\alpha'$. We may also assume that each $W_\alpha := W_\alpha' \cap U$ is closed in $U$. Let $\cW = \{W_\alpha\}_{\alpha \in A}$, where we have abused notation by using $A$ again. We show that $U$ and $\cW = \{W_\alpha\}_{\alpha \in A}$ are as claimed in the statement. 
    
    First, observe that any $\cW$-chamber $C$ is contained in a $\cW'$-chamber in $K$. Now, choose $\tau \in C$ and let $\{\HN_i^\tau(E)\}\subset \mathfrak{H}$ denote the $\tau$-HN factors of $E$. As $\tau$ varies in $C$, each of the factors $\HN_i^\tau(E)$ remains semistable. Furthermore, since we cross none of the walls defined by the relations above, the inequalities $\phi(\HN_1^\tau(E)) > \cdots > \phi(\HN_m^\tau(E))$ are unchanged. Thus, HN filtrations are constant in each $\cW$-chamber.

    Finally, we consider HN filtrations on intersections of walls $S = \bigcap_{\beta \in B} W_\beta$. Since $S$ is in the closure of a chamber $C$, for each $\tau \in S$ we can find a sequence $(\tau_k)$ in $C$ such that $(\tau_k)\to \tau$. Then, by the argument of the preceding paragraph, $\{\HN_i^{\tau_k}(E)\}$ is constant in $k$. Thus, we denote the factors by $\{\HN_i^C(E)\}$. It follows from \cite{GKRpaper04}*{Thm. 4.1} that $\{E_{i-1}^C \to E_{i}^C\to \HN_i^C(E)\}_{i=1}^m$ is uniquely determined up to isomorphism on all of $C$. Since semistability of an object is a closed condition on $\Stab(\cD)$, all elements of $\{\HN_i^C(E)\}$ are $\tau$-semistable and 
    \[
        \phi_\tau(\HN_1^C(E)) \ge \cdots \ge \phi_\tau(\HN^C_m(E)).
    \]
    The $\tau$-HN-filtration of $E$ is obtained from $\{E_{i-1}^C\to E_i^C\to \HN_i^C(E)\}_{i=1}^m$ as follows: given a maximal sequence $i,\ldots, i+k$ of indices such that $\phi_\tau(\HN_i^C(E)) = \cdots = \phi_\tau(\HN_{i+k}^C(E)) =: \varphi$, the octahedral axiom implies that $\Cone(E_i\to E_{i+k})$ is an iterated extension $\{\HN_i^C(E),\ldots, \HN_{i+k}^C(E)\}$ and thus $\tau$-semistable of phase $\varphi$. Coarsening in this fashion determines the $\tau$-HN filtration of $E$ from the $C$-HN filtration and the set of coinciding phases on $S$. Thus, HN filtrations are constant on $S$.
\end{proof}

Next, fix $E\in \mathfrak{d}$ and consider the map $\Stab(\cD)\to \mathscr{M}(\bf{R})$ sending $E\mapsto \d m_{\sigma,E}$. The space $\mathscr{M}(\bf{R})$ admits many natural topologies, including the weak topology defined by $\mathscr{C}^0(\bf{R})$ as in \Cref{S:continuityproperties}. In terms of comparing convergence in $\mathscr{M}(\bf{R})^\mathfrak{d}$ to the metric topology on $\Stab(\cD)$, the weak topology has a major defect: the functions $\phi^+$ and $\phi^-$, with e.g. $\phi^{+}(\sum_{i}a_i \delta_{\theta_i}) = \max\{\theta_i\}$, are not continuous.

\begin{lem}
\label{L:upperlowernotcont}
    Equip $\mathscr{M}(\bf{R})$ with the weak topology dual to any class of functions $\cF$ on $\bf{R}$. The functions $\phi^{\pm}$ are not continuous.
\end{lem}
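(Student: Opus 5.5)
The plan is to exhibit a single sequence of measures that converges in every such weak topology but along which $\phi^+$ (and, symmetrically, $\phi^-$) jumps. The natural candidate is
\[
    \mu_k := \delta_0 + \tfrac{1}{k}\,\delta_1, \qquad \mu := \delta_0 .
\]
For this sequence $\phi^+(\mu_k) = 1$ for every $k$, while $\phi^+(\mu) = 0$. So it suffices to show that $\mu_k \to \mu$ in the weak topology dual to $\cF$.

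That convergence means $\int f\,\d\mu_k \to \int f\,\d\mu$ for every $f\in\cF$. Here
\[
    \int f\,\d\mu_k = f(0) + \tfrac{1}{k} f(1),
\]
which tends to $f(0) = \int f\,\d\mu$ for any real-valued function $f$ whatsoever. No continuity or growth assumption on $f$ is needed, because the measures are finitely supported and integrals are finite sums. Hence $\mu_k\to\mu$ in the weak topology for an arbitrary class $\cF$, and $\phi^+$ is not sequentially continuous. Since the weak topology is an initial topology and the sequence converges in it, $\phi^+$ cannot be continuous.

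For $\phi^-$, take instead $\mu_k := \delta_0 + \tfrac1k\delta_{-1}$. Then $\phi^-(\mu_k) = -1$ for all $k$ while $\phi^-(\delta_0) = 0$, and the same computation shows $\mu_k\to\delta_0$.

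The only point needing care is interpretive: one should confirm that the weak topology dual to $\cF$ means the coarsest topology making each $\mu\mapsto\int f\,\d\mu$ continuous. In that case convergence of a sequence is equivalent to convergence of all these integrals, and the argument above is complete. No step here is a real obstacle. If desired, one can also note that these measures arise as mass measures, namely from $\d m_{\sigma,E}$ for $E$ a direct sum of two semistable objects of phases $0$ and $1$ with the mass of the second tending to zero, illustrating the phenomenon inside $\Stab(\cD)$. This is not needed for the lemma.
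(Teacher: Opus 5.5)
Your proposal is correct and is essentially the paper's proof: it uses the same sequence $\mu_n = \delta_0 + \tfrac{1}{n}\delta_1$, which converges to $\delta_0$ in any such weak topology while $\phi^+(\mu_n) = 1 \neq 0 = \phi^+(\delta_0)$. Your mirrored example $\delta_0 + \tfrac{1}{k}\delta_{-1}$ for $\phi^-$ just spells out the case the paper leaves implicit.
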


\begin{proof}
    Consider the sequence $\mu_n = \delta_0 + \tfrac{1}{n}\delta_1$ in $\mathscr{M}(\bf{R})$. We have $\mu_n\to \delta_0$ as $n\to \infty$. Indeed, taking any $f\in \mathcal{F}$ gives 
    \[
        \int_{\bf{R}} f\:\d\mu = f(0) + \tfrac{1}{n}f(1)\to f(0). 
    \]
    On the other hand, $\phi^+(\mu_n) = 1$ for all $n$, while $\phi^+(\delta_0) = 0$.
\end{proof}

The space $\mathscr{M}(\bf{R})$ admits a stronger topology coming from its structure as a limit of configuration spaces of points in $\bf{R}$, which we now explain. For any $d \ge 1$, let $\mathscr{M}_{\le d}(\bf{R})$ denote the set of measures $\mu$ with $\lvert \supp(\mu)\rvert \le d$. There is a canonical surjection 
\[
    \pi_d:\bf{R}^d\times \bf{R}^d_{>0}\twoheadrightarrow \mathscr{M}_{\le d}(\bf{R})
\]
given by $(\theta_1,\ldots, \theta_d,t_1,\ldots, t_d)\mapsto \sum_{i=1}^d t_i\cdot \delta_{\theta_i}$. We equip $\mathscr{M}_{\le d}(\bf{R})$ with the quotient topology induced by $\pi_d$. That is, a subset $U\subset \mathscr{M}_{\le d}(\bf{R})$ is declared open if and only if $\pi_d^{-1}(U)$ is open. There is a commutative diagram
\[
    \begin{tikzcd}
        \bf{R}^{d-1}\times \bf{R}^{d-1}_{>0} \arrow[r,two heads,"\pi_{d-1}"] \arrow[d,hook,"i_{d-1}",swap]& \mathscr{M}_{\le d-1}(\bf{R}) \arrow[d,hook]\\
        \bf{R}^{d}\times \bf{R}^{d}_{>0} \arrow[r, two heads,"\pi_d"]& \mathscr{M}_{\le d}(\bf{R})
    \end{tikzcd}
\]
where $i_{d-1}(\theta_1,\ldots, \theta_{d-1},t_1,\ldots, t_{d-1}) = (\theta_1,\ldots, \theta_{d-1},\theta_{d-1},t_1,\ldots, t_{d-2},\tfrac{1}{2}t_{d-1},\tfrac{1}{2}t_{d-1})$ and the other vertical map is inclusion. In particular, the canonical map $\mathscr{M}_{\le d-1}(\bf{R})\hookrightarrow \mathscr{M}_{\le d}(\bf{R})$ is continuous and in fact a homeomorphism onto its image. We topologize $\mathscr{M}(\bf{R})$ by the identity
\[
    \mathscr{M}(\bf{R}) = \varinjlim \mathscr{M}_{\le d}(\bf{R})
\] 
and refer to the resulting topology as the \emph{strong} topology on $\mathscr{M}(\bf{R})$. We have already defined $\phi^{\pm}$ for elements of $\mathscr{M}(\bf{R})$. We define the \emph{mass} of $\mu \in \mathscr{M}(\bf{R})$ by 
\[
    m(\mu) := \int_{\bf{R}}1\:\d\mu.
\]
In contrast to \Cref{L:upperlowernotcont}, we have: 

\begin{lem}
\label{L:continuousfunctionsstrong}
    With respect to the strong topology on $\mathscr{M}(\bf{R})$, the functions $\phi^{\pm}$ and $I_f(\mu) = \int_{\bf{R}} f\:\d\mu$ are continuous for any $f\in \mathscr{C}^0(\bf{R})$.
\end{lem}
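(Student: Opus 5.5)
The plan is to reduce everything to the finite-dimensional pieces $\mathscr{M}_{\le d}(\bf{R})$ via the universal properties of the quotient and colimit topologies. A function $g$ on $\mathscr{M}(\bf{R}) = \varinjlim \mathscr{M}_{\le d}(\bf{R})$ is continuous if and only if each restriction $g|_{\mathscr{M}_{\le d}(\bf{R})}$ is continuous. Since $\mathscr{M}_{\le d}(\bf{R})$ carries the quotient topology from $\pi_d$, the restriction is continuous if and only if $g\circ \pi_d : \bf{R}^d\times \bf{R}^d_{>0}\to \bf{R}$ is continuous. So for each of the three kinds of functions it suffices to write down $g\circ\pi_d$ explicitly and check continuity on the open subset $\bf{R}^d\times\bf{R}^d_{>0}$ of Euclidean space.

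For $I_f$, we have $I_f\circ\pi_d(\theta_1,\ldots,\theta_d,t_1,\ldots,t_d) = \sum_{i=1}^d t_i f(\theta_i)$. This is well defined even when several $\theta_i$ coincide, since the masses simply add. It is continuous because $f$ is continuous and sums and products of continuous functions are continuous.

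For $\phi^\pm$, the key observation is that every $t_i>0$. Hence every $\theta_i$ genuinely lies in the support of $\pi_d(\theta,t)$, and $\supp(\pi_d(\theta,t)) = \{\theta_1,\ldots,\theta_d\}$ as a set, with no point ever having zero weight. Therefore
\[
    \phi^+\circ\pi_d(\theta,t) = \max_i \theta_i, \qquad \phi^-\circ\pi_d(\theta,t) = \min_i\theta_i.
\]
Both are continuous functions on $\bf{R}^d\times\bf{R}^d_{>0}$, since the maximum and minimum of finitely many continuous coordinate functions are continuous.

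The only step needing care is the one just used: that $t_i$ ranges over $\bf{R}_{>0}$ and not $\bf{R}_{\ge 0}$. This is exactly what excludes the phenomenon of \Cref{L:upperlowernotcont}, where a mass $\tfrac{1}{n}\delta_1$ shrinks to zero at a fixed position. In the strong topology, a point of the support can only disappear by colliding with another support point. This matches the compatibility maps $i_{d-1}$, which duplicate $\theta_{d-1}$ and split its weight in half. These maps commute with the formulas above, so the restrictions are compatible and the colimit argument applies.
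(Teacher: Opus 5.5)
Your proof is correct and is essentially the paper's argument. You use the colimit topology to reduce to each $\mathscr{M}_{\le d}(\bf{R})$, pull back along $\pi_d$, and observe that $I_f\circ\pi_d=\sum_i t_i f(\theta_i)$ and $\phi^{\pm}\circ\pi_d=\max_i\theta_i,\ \min_i\theta_i$ are continuous on $\bf{R}^d\times\bf{R}^d_{>0}$. Your extra compatibility check with $i_{d-1}$ is harmless but unnecessary, since $\phi^\pm$ and $I_f$ are already globally defined on $\mathscr{M}(\bf{R})$.
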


\begin{proof}
    By definition of the topology on $\mathscr{M}(\bf{R})$, it suffices to check that these functions are cont\-inuous on $\mathscr{M}_{\le d}(\bf{R})$ for any $d\ge 1$. Then, by definition of the topology on $\mathscr{M}_{\le d}(\bf{R})$, it suffices to check after pulling back to $\bf{R}^d\times \bf{R}^d_{>0}$. For $\phi^+$, the corresponding function on $\bf{R}^d\times \bf{R}^d_{>0}$ is $\max\{\theta_i\}_{i=1}^n$, which is continuous. The argument for $\phi^-$ is analogous. For $f\in \mathscr{C}^0(\bf{R})$, the function $\int_{\bf{R}}f\:\d(-)$ pulls back to $(\theta_i,t_i)_{i=1}^d \mapsto \sum_{i=1}^d t_i\cdot f(\theta_i)$, which is continuous.
\end{proof}

\Cref{L:continuousfunctionsstrong} suggests that the strong topology on $\mathscr{M}(\bf{R})$ is suitable for comparison with that of $\Stab(\cD)$ --- henceforth, we consider $\mathscr{M}(\bf{R})$ with the strong topology unless otherwise specified. We define a new topology on $\mathscr{M}(\bf{R})^{\mathfrak{d}}$ which strengthens the product topology: consider the injection $\mathscr{M}(\bf{R})^{\mathfrak{d}} \hookrightarrow (\mathscr{M}(\bf{R}) \times \bf{R}^3)^{\mathfrak{d}}$ defined by 
\[ 
    (\mu_E)_{E\in \mathfrak{d}} \mapsto (\mu_E, \log m(\mu_E),\phi^+(\mu_E),\phi^-(\mu_E))_{E\in \mathfrak{d}}.
\]
Topologize $(\mathbf{R}^3)^{\mathfrak{d}}$ with the uniform topology: that is, convergence is defined using the generalized metric $d((x_E),(y_E)) = \sup_{E\in \mathfrak{d}}\{\lVert x_E-y_E\rVert\}$. We equip $(\mathscr{M}(\bf{R}) \times \bf{R}^3)^{\mathfrak{d}} \cong \mathscr{M}(\bf{R})^{\mathfrak{d}} \times (\bf{R}^3)^\mathfrak{d}$ with the product topology obtained by regarding it as a product of $\mathscr{M}(\bf{R})^{\mathfrak{d}}$ with $(\bf{R}^3)^{\mathfrak{d}}$, equipped with its uniform topology. Finally, the \emph{strong} topology on $\mathscr{M}(\bf{R})^{\mathfrak{d}}$ is the one it inherits as a subspace of $(\mathscr{M}(\bf{R}) \times \bf{R}^3)^{\mathfrak{d}}$, thus topologized.

By \Cref{L:continuousfunctionsstrong}, the strong topology on $\mathscr{M}(\bf{R})^{\mathfrak{d}}$ is at least as strong as the product topology; in fact, it is strictly stronger by \Cref{L:upperlowernotcont}, though we will not use this. 

We remind the reader that in this section, $\Stab(\cD)$ denotes the space of stability conditions satisfying the support property with respect to a fixed surjection $v:\rm{K}_0(\cD) \twoheadrightarrow \Lambda$. 

\begin{thm}
\label{T:strongembedding}
    The map $\d m:\Stab(\cD) \to \mathscr{M}(\bf{R})^{\mathfrak{d}}$ is an embedding when $\mathscr{M}(\bf{R})^{\mathfrak{d}}$ is equipped with the strong topology.
\end{thm}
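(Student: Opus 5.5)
We must show three things: $\d m$ is injective, it is continuous for the strong topology, and its inverse is continuous on the image. Injectivity is already given by \Cref{T:measuretheoreticembedding}.

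\emph{Continuity of the uniform part.} The map $\sigma \mapsto (\log m_\sigma(E), \phi^+_\sigma(E), \phi^-_\sigma(E))_{E\in\mathfrak{d}}$ into $(\bf{R}^3)^{\mathfrak{d}}$ with the uniform metric is $1$-Lipschitz. This is immediate from \eqref{E:Brmetric}, since $d(\sigma,\tau)$ is exactly the supremum over $E$ of the three coordinate differences.

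\emph{Continuity into $\mathscr{M}(\bf{R})^{\mathfrak{d}}$.} Here each factor carries the strong topology, and we need, for each fixed $E$, that $\sigma\mapsto \d m_{\sigma,E}$ is continuous into strong $\mathscr{M}(\bf{R})$. I would do this locally using \Cref{L:wallandchamber}. Near $\sigma$, take the neighbourhood $U$ and walls $\cW$ through $\sigma$. On the closure of each chamber $C$ (which meets $\sigma$), the ``$C$-HN filtration'' $\HN_i^C(E)$, $i=1,\dots,m$, consists of $\tau$-semistable factors for every $\tau\in\overline C\cap U$. Define
\[
\tau\mapsto \bigl(\phi_\tau(\HN_i^C(E)),\ \lvert Z_\tau(\HN_i^C(E))\rvert\bigr)_i\in \bf{R}^m\times\bf{R}^m_{>0}.
\]
This is continuous: $Z$ is continuous, and the phase of a fixed object that stays semistable varies continuously. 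Composing with $\pi_m$ recovers $\d m_{\tau,E}$, because coarsening the filtration at coincident phases merges Dirac masses, and the merged masses add since the phases are equal. Thus the map is continuous on each closed piece $\overline C\cap U$. There are finitely many such pieces, and they cover a neighbourhood of $\sigma$, so the pasting lemma gives continuity near $\sigma$. One subtlety is that $m$ depends on $C$, but all these maps land in $\mathscr{M}_{\le d}$ for $d=\max m$, which is continuous into the colimit. This step is where I expect the main obstacle: one must justify that the closed chambers cover $U$, i.e.\ that walls are nowhere dense. This holds because walls are codimension-one submanifolds given by conditions of the form $Z(T_1)\in\bf{R}_{>0}Z(T_2)$ in local coordinates (Bridgeland's local homeomorphism).

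\emph{Continuity of the inverse.} Suppose $\d m_{\sigma_k}\to \d m_\sigma$ in the strong topology. In particular the uniform components converge, so
\[
\sup_E \max\{\lvert\phi^\pm_{\sigma_k}(E)-\phi^\pm_\sigma(E)\rvert,\lvert\log m_{\sigma_k}(E)-\log m_\sigma(E)\rvert\}\to 0,
\]
which is exactly $d(\sigma_k,\sigma)\to 0$ by \eqref{E:Brmetric}. The same argument works for nets, since the topologies need not be first countable: the preimage of a metric ball is the intersection of the image with an open set in $(\bf{R}^3)^{\mathfrak{d}}$. So $\d m^{-1}$ is continuous on the image, and $\d m$ is an embedding.
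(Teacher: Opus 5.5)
Your proposal is correct and follows essentially the same route as the paper: injectivity comes from \Cref{T:measuretheoreticembedding}, continuity into the strong topology comes from \Cref{L:wallandchamber} together with coarsening a locally constant HN filtration at coincident phases, and the inverse is continuous because the uniform $(\mathbf{R}^3)^{\mathfrak{d}}$ component reproduces Bridgeland's metric. The differences are minor tidyings of the same argument: you paste over the finitely many closed chambers where the paper splits a convergent sequence by strata, you check the uniform component explicitly in the forward direction where the paper leaves it implicit, and you phrase the inverse with open balls or nets rather than sequences.
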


\begin{proof}
    By \Cref{T:measuretheoreticembedding}, we know that $\d m:\Stab(\cD)\to \mathscr{M}(\bf{R})^{\mathfrak{d}}$ is injective. Consider a sequence $(\sigma_k)_{k\in \bf{N}}$ in $\Stab(\cD)$ such that $(\sigma_k)_{k\in \bf{N}} \to \sigma$ in the metric topology on $\Stab(\cD)$. We first show that $\d m_{\sigma_k,E} \to \d m_{\sigma,E}$ converges with respect to the strong topology on $\mathscr{M}(\bf{R})$. By \Cref{L:wallandchamber}, we may assume that $\sigma_k \in U$ in the notation of \emph{loc. cit.} for all $k$. Furthermore, we may assume that $\sigma_k$ lies in a $\cW$-stratum $S$ for all $k$.\endnote{We are using the following fact: Given a sequence $(\sigma_k)_{k\in \bf{N}}$ in a topological space $X$ which admits a partition into finitely many subsequences, each of which converges to a point $\sigma$, then $(\sigma)_{k\in \bf{N}}\to \sigma$ also. In our context, the subsequences are indexed by the $\cW$-strata in $U$.} So, there is a constant HN-filtration with factors $\{F_i\}_{i=1}^m$ and such that $\phi_{\sigma_k}(F_1) > \cdots > \phi_{\sigma_k}(F_m)$ for all $k$. Thus,
    \[
        \d m_{\sigma_k,E} = \sum_{i=1}^m m_{\sigma_k}(F_i) \cdot \delta_{\phi_{\sigma_k}(F_i)}.
    \]
    In the limit as $k\to \infty$, the inequalities of phases may become equalities. Assume for simplicity that for some $1\le p \le m$, $\phi_\sigma(F_{p}) = \cdots = \phi_\sigma(F_m)$ is a maximal chain of equalities. As explained in the proof of \Cref{L:wallandchamber}, the HN factor of $E$ for $\sigma$ with phase $\phi_\sigma(F_m)$, denoted $G$, is an extension of the objects $F_p,\ldots, F_m$, each appearing with multiplicity one. The equality 
    \[
        \lim_{k\to \infty} \sum_{i=p}^m m_{\sigma_k}(F_i) \cdot \delta_{\phi_{\sigma_k}(F_i)} = \sum_{i=p}^m m_\sigma(F_i) \cdot \delta_{\phi_\sigma(G)} = m_\sigma(G)\cdot \delta_{\phi_\sigma(G)}
    \]
    implies that $\lim_{k\to \infty} \d m_{\sigma_k,E} = \d m_{\sigma,E}$. The general situation is handled similarly. Thus, the map $\d m:\Stab(\cD) \to \mathscr{M}(\bf{R})^{\mathfrak{d}}$ is a continuous injection. Finally, we show that if $(\sigma_k)\to \sigma$ in the strong topology on $\mathscr{M}(\bf{R})^{\mathfrak{d}}$ then it converges in the metric topology on $\Stab(\cD)$. Let $\epsilon>0$ be given. By definition of convergence in the uniform topology on $(\bf{R}^3)^{\mathfrak{d}}$, there exists $k_0$ such that $k\ge k_0$ implies that 
    \[
        d(\sigma_k,\sigma) = \sup_{E\in \mathfrak{d}}\left\{\left\lvert \log \tfrac{m_{\sigma_k}(E)}{m_\sigma(E)} \right\rvert, \lvert \phi^{\pm}_{\sigma_k}(E) - \phi^{\pm}_\sigma(E) \rvert\right\}<\epsilon.
    \]
    Thus, $(\sigma_k)\to \sigma$ in the metric topology on $\Stab(\cD)$.
\end{proof}

In some special cases, the product topology on $\mathscr{M}(\bf{R})^{\mathfrak{d}}$, where each copy of $\mathscr{M}(\bf{R})$ is equipped with the strong topology, is already strong enough for $\d m:\Stab(\cD) \to \mathscr{M}(\bf{R})$ to be an embedding. Let $\cD$ be a triangulated category. We let
\begin{equation}
\label{E:stableobjectset}
    \mathfrak{s}(\cD) \:\::= \bigcup_{\sigma \in \Stab(\cD)} \{E\in \mathfrak{d}: E\text{ is }\sigma\text{-semistable}\}.
\end{equation}
We say that a triangulated category $\cD$ is of \emph{finite type} if $\mathfrak{s}(\cD)/[1]$ is a finite set. Given $\sigma,\tau \in \Stab(\cD)$, we define
\begin{equation}
\label{E:uniformDelta}
    \Delta(\sigma,\tau) := \sup_{S\in \mathfrak{s}}\left\{ \lvert \phi_{\sigma}^{\pm}(S) - \phi_\tau^{\pm}(S)\rvert, \left\lvert \log \frac{m_{\sigma}(S)}{m_\tau(S)}\right\rvert \right\}.
\end{equation}

\begin{prop}
\label{P:finitetypeconvergence}
    If $\cD$ is a finite type triangulated category, then a sequence $(\sigma_k)_{k\in\bf{N}}$ in $\Stab(\cD)$ converges to $\sigma$ with respect to Bridgeland's topology if and only if $\lim_{k\to\infty}\Delta(\sigma_k,\sigma) = 0$.
\end{prop}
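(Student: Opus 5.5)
The forward implication is immediate. The set $\mathfrak{s}$ is a subset of $\mathfrak{d}$, and the three quantities in \eqref{E:uniformDelta} are exactly the ones appearing in \eqref{E:Brmetric}. Hence $\Delta(\sigma_k,\sigma)\le d(\sigma_k,\sigma)$, and convergence in Bridgeland's topology forces $\Delta(\sigma_k,\sigma)\to 0$. For the converse I would show directly that $d(\sigma,\tau)\le \Delta(\sigma,\tau)$ for any $\sigma,\tau\in\Stab(\cD)$. Combined with the inequality just noted, this gives $d=\Delta$. Finite type is then only used to make $\Delta$ a supremum over a manageable set; I do not expect the argument to need it otherwise.

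\emph{Step 1 (phases).} Write $\delta:=\Delta(\sigma,\tau)$. Every $\sigma$-semistable $S$ of phase $\phi$ lies in $\mathfrak{s}$, so $\phi^\pm_\tau(S)\in[\phi-\delta,\phi+\delta]$. This says $\cP_\sigma(\phi)\subset\cP_\tau[\phi-\delta,\phi+\delta]$.
Now let $E$ be any non-zero object. Using its $\sigma$-HN filtration, $E$ is an iterated extension of objects of $\cP_\sigma[\phi^-_\sigma(E),\phi^+_\sigma(E)]$. Since $\cP_\tau[a,b]$ is extension closed, it follows that $E\in\cP_\tau[\phi^-_\sigma(E)-\delta,\phi^+_\sigma(E)+\delta]$. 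This gives $\phi^+_\tau(E)\le\phi^+_\sigma(E)+\delta$ and $\phi^-_\tau(E)\ge\phi^-_\sigma(E)-\delta$.
Running the same argument with the roles of $\sigma$ and $\tau$ swapped, using $\tau$-semistable objects (which also lie in $\mathfrak{s}$), gives the reverse inequalities. Hence $|\phi^\pm_\sigma(E)-\phi^\pm_\tau(E)|\le\delta$.

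\emph{Step 2 (masses).} Let $G_1,\dots,G_n$ be the $\sigma$-HN factors of $E$. Iterating the triangle inequality for masses (\Cref{L:triangle_inequality} at $t=0$) along the HN filtration gives
\[
m_\tau(E)\le\sum_i m_\tau(G_i)\le e^{\delta}\sum_i m_\sigma(G_i)=e^{\delta}m_\sigma(E).
\]
The middle inequality uses $G_i\in\mathfrak{s}$. Exchanging $\sigma$ and $\tau$ and using the $\tau$-HN factors gives $m_\sigma(E)\le e^\delta m_\tau(E)$. So $|\log(m_\tau(E)/m_\sigma(E))|\le\delta$.

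Steps 1 and 2 together give $d(\sigma,\tau)\le\Delta(\sigma,\tau)$, which proves the converse. The only point I regard as needing care is the triangle inequality in Step 2. As the introduction notes, it rests on \Cref{L:triangle_inequality}, so I take it as given from that result. The rest is formal manipulation of slicings.
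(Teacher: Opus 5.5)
Your proof is correct, and it takes a different route from the paper's.

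For the converse, the paper works in $\Hom(\Lambda,\bC)\times\Slice(\cD)$. It controls $d_{\mathrm{slice}}(\cP_{\sigma_k},\cP_\sigma)$ through Bridgeland's one-sided formula \eqref{E:dsliceinf}, which only involves $\sigma$-semistable objects; finite type is invoked there merely to replace an infimum by a maximum. It then gets $Z_{\sigma_k}\to Z_\sigma$ by evaluating on finitely many $\sigma$-stable objects whose classes generate $\Lambda$, using \Cref{L:conevector}.

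You instead prove the identity $d(\sigma,\tau)=\Delta(\sigma,\tau)$ for all $\sigma,\tau$. You handle phases by extension-closedness of $\cP_\tau[a,b]$, and masses by the $t=0$ case of \Cref{L:triangle_inequality} along HN filtrations. The symmetric half of each step is legitimate precisely because $\mathfrak{s}$ is a union over all of $\Stab(\cD)$, so it also contains the $\tau$-semistable objects.

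What your route buys:
\begin{itemize}
\item It is shorter and never uses the finite type hypothesis.
\item It does not need the lattice $\Lambda$ beyond the identification of Bridgeland's topology with the metric topology of \eqref{E:Brmetric}, which the paper also relies on for the forward direction.
\item It shows that finite type is genuinely needed only in \Cref{C:finiteconvergence}, to upgrade convergence in the product topology to uniform convergence over $\mathfrak{s}$.
\end{itemize}

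What the paper's route buys: it only ever tests against semistable objects of the limit $\sigma$, never of the $\sigma_k$. So it establishes convergence from the supremum in \eqref{E:uniformDelta} restricted to $\sigma$-semistable $S$, at the cost of using the support property and the finite-rank lattice $\Lambda$.
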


\begin{proof}
    If $(\sigma_k)\to \sigma$, then the definition of the metric \eqref{E:Brmetric} implies that $\lim_{k\to\infty} \Delta(\sigma_k,\sigma) = 0$. Conversely, suppose that $\lim_{k\to\infty} \Delta(\sigma_k,\sigma) = 0$. Recall from \cite{Br07}*{Prop. 6.1} that 
    \begin{equation}
    \label{E:dsliceinf}
        d_{\rm{slice}}(\cP_{\sigma_k},\cP_\sigma) = \inf\left\{\epsilon\ge 0: \cP_\sigma(\phi)\subset \cP_{\sigma_k}[\phi-\epsilon,\phi+\epsilon] \text{ for all } \phi \in \bf{R} \right\}.
    \end{equation}
    However, we have  
    \begin{align*} 
        d_{\rm{slice}}(\cP_{\sigma_k},\cP_\sigma) & = \inf\left\{\epsilon\ge 0: \cP_\sigma(\phi)\subset \cP_{\sigma_k}[\phi-\epsilon,\phi+\epsilon] \text{ for all } \phi \in (0,1] \right\}\\
        & = \max\left\{ \lvert\phi_{\sigma_k}^{\pm}(E) - \phi_\sigma(E)\rvert: 0\ne E\in \cP_\sigma(0,1]\right\}
    \end{align*}
    where for the second equality we use the fact that there are finitely many semistable objects in $\cP_\sigma(0,1]$. Since $\lim_{k\to\infty}\Delta(\sigma_k,\sigma) = 0$, it follows that $\lim_{k\to\infty} \cP_{\sigma_k} = \cP_\sigma$ in $\Slice(\cD)$. 
    
    Because $\Stab(\cD)$ is equipped with the subspace topology as a subset of $\Hom(\Lambda,\bf{C})\times \Slice(\cD)$ \cite{Bayer_short}*{p. 1601}, it only remains to show that $\lim_{k\to\infty} Z_{\sigma_k} = Z_\sigma$. For this, choose $\sigma$-stable objects $S_1,\ldots,S_p$ that map to a generating set of $\Lambda$ under $v$. Then, by definition of $\Delta(\sigma_k,\sigma)$, we have that $\lim_{k\to\infty} \log m_\sigma(S_i)/m_{\sigma_k}(S_i) = 0$ for all $i=1,\ldots, p$. This, combined with the fact that $\lvert \phi_{\sigma_k}^+(S_i) - \phi_{\sigma_k}^-(S_i)\rvert \to 0$ for all $i=1,\ldots, p$ and \Cref{L:conevector} implies that $\max_{i=1}^p \left\{\lvert Z_{\sigma_k}(S_i) - Z_\sigma(S_i)\rvert\right\} \to 0$ and thus that $\lim_{k\to\infty} Z_{\sigma_k} = Z_\sigma$. 
\end{proof}

\begin{cor}
\label{C:finiteconvergence}
    Let $\mathcal{D}$ be a finite type triangulated category. The map $\d m:\Stab(\cD) \to \mathscr{M}(\bf{R})^{\mathfrak{d}}$ is an embedding when $\mathscr{M}(\bf{R})^{\mathfrak{d}}$ is equipped with the product topology.
\end{cor}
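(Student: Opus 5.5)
We must show that $\d m$ is continuous for the product topology, with each factor carrying the strong topology, and that the inverse on its image is continuous. Continuity is already contained in the proof of \Cref{T:strongembedding}. There the first step showed that if $\sigma_k \to \sigma$ in Bridgeland's topology, then $\d m_{\sigma_k,E} \to \d m_{\sigma,E}$ in the strong topology on $\mathscr{M}(\bf{R})$ for each fixed $E$; this used \Cref{L:wallandchamber} together with the coarsening of HN filtrations on walls. Convergence in each coordinate is exactly convergence in the product topology. Since $\Stab(\cD)$ is metrizable, this sequential statement suffices.

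For the converse, suppose $\d m_{\sigma_k} \to \d m_\sigma$ in the product topology, with $\sigma_k,\sigma \in \Stab(\cD)$. We want $\sigma_k \to \sigma$ in Bridgeland's topology, and by \Cref{P:finitetypeconvergence} it suffices to show $\Delta(\sigma_k,\sigma) \to 0$. The key point is that $\Delta$ is a supremum over $\mathfrak{s}$. Both $\phi^{\pm}$ and $\log m$ commute with shifts: the shift $[1]$ adds $1$ to $\phi^\pm$ and leaves $m$ unchanged, so the differences $\phi^\pm_{\sigma_k}(S[n]) - \phi^\pm_\sigma(S[n])$ and $\log\bigl(m_{\sigma_k}(S[n])/m_\sigma(S[n])\bigr)$ are independent of $n$. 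Hence the supremum in \eqref{E:uniformDelta} is really a maximum over a finite set of representatives $S_1,\ldots,S_N$ of $\mathfrak{s}(\cD)/[1]$, which is finite because $\cD$ has finite type.

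It remains to handle each of these finitely many objects. For each $S_j$, the coordinate $\d m_{\sigma_k,S_j}$ converges to $\d m_{\sigma,S_j}$ in the strong topology on $\mathscr{M}(\bf{R})$. By \Cref{L:continuousfunctionsstrong}, the functions $\phi^\pm$ and $m = I_1$ are continuous for that topology. Therefore $\phi^\pm_{\sigma_k}(S_j) \to \phi^\pm_\sigma(S_j)$ and $\log m_{\sigma_k}(S_j) \to \log m_\sigma(S_j)$, the latter using that $m_\sigma(S_j) > 0$. Taking the maximum over finitely many $j$ gives $\Delta(\sigma_k,\sigma) \to 0$, and so $\sigma_k \to \sigma$ by \Cref{P:finitetypeconvergence}.

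One technical point concerns sequences. The topology on the image is the product of the strong topologies, which may not be first countable, so the sequential argument needs justification. Injectivity of $\d m$ (\Cref{T:measuretheoreticembedding}) identifies $\Stab(\cD)$ with its image as sets. To prove that the inverse is continuous, I would run the same argument with nets in place of sequences. This requires checking that \Cref{P:finitetypeconvergence} holds for nets. Its proof uses only that finitely many quantities tend to zero: the slicing distance is a maximum over finitely many semistable objects in $\cP_\sigma(0,1]$, and the central charge is controlled through finitely many generators $S_i$. So the proof transfers verbatim.

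I expect the main obstacle to be this passage from sequences to nets, together with confirming that the reduction of the supremum in $\Delta$ to finitely many objects is legitimate. The second point is exactly where the finite type hypothesis enters: it is what makes the uniform control coming from the $(\bf{R}^3)^{\mathfrak{d}}$ factor in \Cref{T:strongembedding} unnecessary.
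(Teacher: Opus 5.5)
Your proposal is correct and follows the paper's proof. Like the paper, you reduce $\Delta(\sigma_k,\sigma)$ to a maximum over finitely many shift-representatives of $\mathfrak{s}$, use \Cref{L:continuousfunctionsstrong} to get convergence of $\phi^{\pm}$ and $\log m$ on those objects, and conclude with \Cref{P:finitetypeconvergence}. Two of your details improve on the paper. Your justification of continuity via the first step of the proof of \Cref{T:strongembedding} fits better than the paper's citation of \Cref{C:continuousmaptomeasures}, which concerns the weak topology. Your concern about nets is harmless: the sequential form of \Cref{P:finitetypeconvergence} already gives the $\epsilon$--$\delta$ statement needed for continuity of the inverse.
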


\begin{proof}
    By \Cref{C:continuousmaptomeasures}, we only need to show that if $(\d m_{\sigma_k})_{k\in \bf{N}}\to \d m_\sigma$ with respect to the product topology on $\mathscr{M}(\bf{R})^{\mathfrak{d}}$ then $(\sigma_k)_{k\in \bf{N}}\to \sigma$ in the metric topology on $\Stab(\cD)$. So, suppose that  $(\d m_{\sigma_k})_{k\in \bf{N}}\to \d m_{\sigma}$ in the product topology on $\mathscr{M}(\bf{R})^{\mathfrak{d}}$. Since $\mathfrak{s}$ is a finite set up to shift, by \Cref{L:continuousfunctionsstrong} we have $\lim_{k\to\infty} \Delta(\sigma_k,\sigma) = 0$. The result now follows from \Cref{P:finitetypeconvergence}.
\end{proof}

\begin{ex}
    Let $Q$ be a quiver of type ADE. In this case, $\DCoh(\rep\:Q)$ is of finite type in the above sense. Thus, \Cref{C:finiteconvergence} applies here. It seems likely that a similar result should hold for certain classes of ``discrete triangulated categories'' as considered in \cite{BPPDiscrete}.
\end{ex}

We make some concluding remarks about geometric structures on $\mathscr{M}_{\le d}(\bf{R})$. Its presentation as a quotient of $\bf{R}^n\times \bf{R}_{>0}^n$ gives it a canonical structure of a diffeological space in the sense of Souriau \cite{SouriauGroupes}. This endows $\mathscr{M}_{\le d}(\bf{R})$ with a sheaf of functions $\mathscr{C}^\infty$ declared ``smooth'' such that $\mathscr{C}^\infty(U):=\mathscr{C}^\infty(\pi_d^{-1}(U))$. The category of diffeological spaces is closed under colimits, so that $\mathscr{M}(\bf{R})$ has an induced diffeological space structure. 

This realizes $\mathscr{M}(\bf{R})$ as a space stratified by weighted configuration spaces. Indeed, for each $d\ge 1$, the locus $\mathscr{M}_d(\bf{R})$ consisting of measures $\mu$ with support of size $d$ is diffeomorphic to a trivial $\bf{R}_{>0}^d$-bundle over the non-orbifold locus of $\Sym^d(\bf{R}) = \bf{R}^d/\mathfrak{S}_d$.

\begin{lem}
    For all $f\in \mathscr{C}^\infty(\bf{R})$ the map $I_f(\mu) = \int_{\bf{R}}f\:\d\mu$ defines a smooth function on $\mathscr{M}(\bf{R})$. 
\end{lem}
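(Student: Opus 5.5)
The plan is to reduce everything to the definition of the diffeology on $\mathscr{M}(\bf{R})$ as a colimit, exactly as in the proof of \Cref{L:continuousfunctionsstrong}. In the category of diffeological spaces, a function on a colimit $\varinjlim \mathscr{M}_{\le d}(\bf{R})$ is smooth if and only if its restriction to each $\mathscr{M}_{\le d}(\bf{R})$ is smooth. So it suffices to show that $I_f|_{\mathscr{M}_{\le d}(\bf{R})}$ is smooth for every $d \ge 1$.

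Since $\mathscr{M}_{\le d}(\bf{R})$ carries the quotient diffeology induced by $\pi_d$, with smooth functions on $U$ defined as $\mathscr{C}^\infty(\pi_d^{-1}(U))$, a function on $\mathscr{M}_{\le d}(\bf{R})$ is smooth exactly when its pullback along $\pi_d$ is smooth on the open set $\bf{R}^d\times\bf{R}^d_{>0}$. This pullback is
\[
    (\theta_1,\ldots,\theta_d,t_1,\ldots,t_d)\mapsto \sum_{i=1}^d t_i\, f(\theta_i).
\]
It is a finite sum of products of the smooth coordinate functions $t_i$ with the smooth functions $f\circ \mathrm{pr}_{\theta_i}$, so it is smooth. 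The pullback is automatically invariant under the identifications made by $\pi_d$, namely permutations and merging of coincident $\theta_i$ with added weights, because it equals $\int f\,\d(\pi_d(\cdot))$. So it descends, and there is nothing extra to verify.

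The only step needing care is compatibility across the transition maps $\mathscr{M}_{\le d-1}(\bf{R})\hookrightarrow\mathscr{M}_{\le d}(\bf{R})$. The restrictions of $I_f$ are all the same function $\mu\mapsto\int f\,\d\mu$, so they agree automatically. One can also check this directly via $i_{d-1}$: duplicating $\theta_{d-1}$ and halving $t_{d-1}$ leaves $\sum t_i f(\theta_i)$ unchanged.

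I expect no real obstacle here. The main point is simply recalling that smoothness in the colimit and quotient diffeologies is tested on the plots $\pi_d$, which is the diffeological analogue of the topological argument already given in \Cref{L:continuousfunctionsstrong}.
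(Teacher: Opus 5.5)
Your proposal is correct and follows the paper's proof. You reduce to each $\mathscr{M}_{\le d}(\mathbf{R})$ using the colimit diffeology, then observe that $\pi_d^*(I_f)(\theta_1,\ldots,\theta_d,t_1,\ldots,t_d)=\sum_{i} t_i f(\theta_i)$ is smooth. Your extra remarks on descent along $\pi_d$ and compatibility with the maps $\mathscr{M}_{\le d-1}(\mathbf{R})\hookrightarrow\mathscr{M}_{\le d}(\mathbf{R})$ are correct; the paper leaves them implicit.
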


\begin{proof}
    By definition, we must check that $I_f$ defines a smooth function on $\mathscr{M}_{\le d}(\bf{R})$ for all $d$. This is immediate, since $\pi_d^*(I_f)(\theta_1,\ldots, \theta_d,t_1,\ldots, t_d) = \sum t_i\cdot f(\theta_i)$, which is smooth whenever $f$ is. 
\end{proof}

In particular, taking $f(\theta) = \theta$ we see that the mass function $m:\mathscr{M}(\bf{R})\to \bf{R}$ is smooth in this sense. The category of diffeological spaces admits arbitrary products, so that $\mathscr{M}(\bf{R})^{\mathfrak{d}}$ admits a minimal diffeological structure such that the projections are all smooth. Consequently, $\Stab(\cD)$ admits a diffeological structure induced by the injection $\d m:\Stab(\cD) \hookrightarrow \mathscr{M}(\bf{R})^{\mathfrak{d}}$. 

\begin{rem}
    This smooth structure is distinct from the one given by Bridgeland's deformation theorem \cite{Br07}. Indeed, while the mass function of an object $E$ is a global smooth function on $\Stab(\cD)$ with the diffeological structure induced by $\d m$, mass functions are not smooth on $\Stab(\cD)$ with the usual smooth structure. 
    
    For example, consider $\DCoh(\rep A_2)$ and let $E = (\bf{C}\to \bf{C})$ be the non-trivial extension of $S_1$ by $S_2$. There is a short exact sequence $0\to S_2\to E\to S_1\to 0$. We can define a path $\sigma_t$ of stability conditions with heart $\rep A_2$ for $t\in (-\varepsilon,\varepsilon)$ by putting $Z_{\sigma_t}(S_2) = i$ and $Z_{\sigma_t}(S_1) = \sin(t) + i\cos(t)$. For $t\le 0$, $m_{t}(E) = \lvert Z_t(E)\rvert = \lvert Z_t(S_1)+ Z_t(S_2)\rvert$, while for $t\ge 0$, we have $m_t(E) = \lvert Z_t(S_1)\rvert + \lvert Z_t(S_2)\rvert$. A calculation shows that $m_t(E)$ fails to be smooth near $t=0$ by comparing second derivatives.\endnote{The two relevant functions are $z_t = (0,1)$ and $w_t = (\sin t,\cos t)$. We have $\lvert Z_t(S_1)\rvert + \lvert Z_t(S_2)\rvert = \lvert z_t\rvert + \lvert w_t\rvert = 2$ and $\lvert Z_t(E)\rvert = \lvert z_t+w_t\rvert = \sqrt{2}\cdot \sqrt{\cos t+1}$. All derivatives of $\lvert z_t\rvert + \lvert w_t\rvert$ are zero. On the other hand:
    \[
        \frac{d}{dt} \lvert Z_t(E)\rvert = \frac{1}{\sqrt{2}}\cdot \frac{-\sin t}{\sqrt{\cos t + 1}}
    \]
    and 
    \[
        \frac{d^2}{dt^2}\lvert Z_t(E)\rvert = \frac{1}{\sqrt{2}}\left(\frac{-\cos t\sqrt{\cos t + 1} -\tfrac{1}{2}\sin^2 t (\cos t+1)^{-1/2}}{\cos t +1}\right).
    \]
    Evaluating this expression at $t=0$ gives $-1/2$.} It would be interesting to see if one can use these two smooth structures to construct examples of non-equivalent smooth manifold structures on $\Stab(\cD)$. 
\end{rem}

\subsection*{Comparison with Thurston compactification}

As mentioned in the introduction, in recent years there have been many proposed enlargements of the space of stability conditions. Among these examples is the \emph{Thurston compact\-ification} of Bapat--Deopurkar--Licata \cite{ThurstonBDL}. This work gives a weak compactification of $\Stab(\cD)/\bf{C}$ for a broad class of triangulated categories by analogy to Thurston's work on compact\-ifying Teichm\"uller spaces. One chooses a class $S$ of objects in $\cD$ and studies, for $t\in \bf{R}$, the continuous maps 
\[
    m^t:\Stab(\cD)/\bf{C}\to \bf{P}_{\bf{R}}^S
\]
defined by $\sigma \mapsto [m_\sigma^t(\gamma):\gamma \in S]$. As pointed out in \cite{ThurstonKKO}, this map fails to be injective for $\cD = \DCoh(\bf{P}^1)$ for \emph{any} choice of $S$. However, in \cite{ThurstonBDL}*{Thm. 4.3} it is shown that for $n\ge 2$ and any $t_1<\cdots<t_n$ the map 
\[
    \prod_{i=1}^n m^{t_i} :\Stab(\cD)/\bf{C} \to (\bf{P}^S_{\bf{R}})^n
\]
is injective if $S$ contains a set of representatives of $\mathfrak{s}(\cD)$ as in \eqref{E:stableobjectset}. A remarkable fact is that if $S$ contains a finite set of objects whose sum is a classical generator of $\cD$, then the image of any $m^t$ is precompact by \cite{ThurstonBDL}*{Prop. 4.1}. Consequently, one can obtain from the framework of \emph{loc. cit.} a continuous injection $\Stab(\cD)/\bf{C}\hookrightarrow (\bf{P}^S_{\bf{R}})^n$ with compact closure. It remains to determine when these embeddings are homeomorphisms onto their image. For the Calabi--Yau-2 categories associated to the $\hat{A}_1$ and $A_2$ quivers, it is proven in \cite{ThurstonBDL} that $m = m^0$ is already injective, and in fact a homeomorphism onto its image. In \cite{DeopurkarThurstonk3}, an analogous result is proven in the case of $\Stab(S)$ for $S$ an analytic K3 surface with $\Pic(S) = 0$.

By contrast, the measure theoretic embeddings of \Cref{T:strongembedding} and \Cref{C:finiteconvergence} are homeo\-morphisms onto their images. There is a natural action of $\bf{C} = (\bf{C}^*)^{\sim}$ on $\mathscr{M}(\bf{R})$ given by regarding $w \in \bf{C}$ as $w = r+ i\pi \varphi$ for $r,\varphi \in \bf{R}$ and defining 
\[
    (r+i\pi\varphi) \cdot \sum_j a_j \cdot \delta_{\theta_j} =  \sum_j (e^r\cdot a_j) \cdot \delta_{\theta_j - \varphi}.
\]
One can verify that there is a commutative square 
\[
    \begin{tikzcd}
        \Stab(\cD) \arrow[d]\arrow[r,hook,"\d m"]& \mathscr{M}(\bf{R})^{\mathfrak{d}}\arrow[d]\\
        \Stab(\cD)/\bf{C}\arrow[r,hook,dashed,"\d \overline{m}"]& \mathscr{M}(\bf{R})^{\mathfrak{d}}/\bf{C}
    \end{tikzcd}
\]
where $\d\overline{m}$ is a homeomorphism whenever $\d m$ is. In particular, we obtain a homeomorphic emb\-edding of $\Stab(\cD)/\bf{C}$ into $\mathscr{M}(\bf{R})^{\mathfrak{d}}/\bf{C}$. However, this homeomorphic embedding typically cannot have pre-compact image, by contrast to \cite{ThurstonBDL}*{Prop. 4.1}. 

We say that a non-zero object $E$ is \emph{massless} relative to an object $F$ if there exists a sequence $(\sigma_k)_{k\in \bf{N}}$ such that $\lim_{k\to \infty} m_{\sigma_k}(E)/m_{\sigma_k}(F) = 0.$ We call the pair $(E,F)$ a pair of relatively massless objects for $(\sigma_k)_{k\in \bf{N}}$. Such pairs are a source non-compact directions in $\Stab(\cD)$.

\begin{lem}
\label{L:noncompact}
    If $\mathcal{D}$ admits a pair of relatively massless objects then the image of $\d \overline{m}$ is not pre-compact.
\end{lem}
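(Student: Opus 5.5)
Given a pair $(E,F)$ of relatively massless objects for a sequence $(\sigma_k)$, I will find a continuous function on $\mathscr{M}(\bf{R})^{\mathfrak{d}}/\bf{C}$ that is unbounded on the image of $\d\overline{m}$. A pre-compact set has compact closure, and a continuous real function is bounded on a compact set, so such a function rules out pre-compactness. The natural candidate is the log mass ratio
\[
    \rho_{E,F}\big((\mu_G)_{G\in\mathfrak{d}}\big) := \log m(\mu_E) - \log m(\mu_F),
\]
where $m(\mu) = \int_{\bf{R}} 1\,\d\mu$ as before.

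The key steps, in order, are as follows.

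First, I check that $\rho_{E,F}$ is continuous on $\mathscr{M}(\bf{R})^{\mathfrak{d}}$. The mass $m = I_1$ is continuous for the strong topology on $\mathscr{M}(\bf{R})$ by \Cref{L:continuousfunctionsstrong}. It is also continuous for the weak topology, since the constant function $1$ lies in $\mathscr{C}^0(\bf{R})$. Because $\rho_{E,F}$ depends only on the $E$ and $F$ coordinates, it is continuous for the product topology, and hence also for the stronger strong topology on $\mathscr{M}(\bf{R})^{\mathfrak{d}}$. Measures in $\mathscr{M}(\bf{R})$ are non-zero, so the logarithms are defined.

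Second, I check that $\rho_{E,F}$ is invariant under the $\bf{C}$-action $(r+i\pi\varphi)\cdot \sum_j a_j\delta_{\theta_j} = \sum_j e^r a_j\delta_{\theta_j-\varphi}$. This action multiplies every mass by the same factor $e^r$, so the difference of logarithms is unchanged. Hence $\rho_{E,F}$ descends to a continuous function $\overline{\rho}_{E,F}$ on the quotient $\mathscr{M}(\bf{R})^{\mathfrak{d}}/\bf{C}$, by the universal property of the quotient topology.

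Third, I evaluate along the sequence. We have
\[
    \overline{\rho}_{E,F}\big(\d\overline{m}([\sigma_k])\big) = \log\big(m_{\sigma_k}(E)/m_{\sigma_k}(F)\big),
\]
and this tends to $-\infty$ by the massless hypothesis. So $\overline{\rho}_{E,F}$ is unbounded on the image of $\d\overline{m}$.

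To conclude, suppose the image had compact closure $K$. Then the continuous function $\overline{\rho}_{E,F}$ would be bounded on $K$, which is a contradiction. I do not need Hausdorffness of the quotient for this step, since any continuous real function on any compact space is bounded.

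The only point requiring care is which topology is meant on $\mathscr{M}(\bf{R})^{\mathfrak{d}}/\bf{C}$: the product topology or the strong topology of \Cref{T:strongembedding}. The argument above covers both. Passing to the quotient is harmless because $\rho_{E,F}$ is constant on $\bf{C}$-orbits, so I expect no genuine obstacle.
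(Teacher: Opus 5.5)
Your proof is correct. It rests on the same obstruction as the paper's: the $\mathbf{C}$-invariant ratio $m(\mu_E)/m(\mu_F)$ tends to $0$ along $(\sigma_k)$, which is incompatible with every measure in $\mathscr{M}(\mathbf{R})$ having positive mass.

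The execution differs. The paper argues by contradiction: if $(\d\overline{m}_{\sigma_k})$ or a subsequence had a limit point $\overline{\mu}$, then some lift $\mu$ would satisfy $\int_{\mathbf{R}}\d\mu_E = 0$. Making that sketch rigorous requires three things. First, the lift must be chosen with $m(\mu_F)$ normalized. Second, the convergence (or clustering) of the classes $\d\overline{m}_{\sigma_k}$ must be transported to suitably translated representatives in $\mathscr{M}(\mathbf{R})^{\mathfrak{d}}$. Third, one must work with cluster points rather than convergent subsequences, since $\mathscr{M}(\mathbf{R})^{\mathfrak{d}}$ and its quotient need not be first countable.

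Your route puts the ratio into the continuous function $\overline{\rho}_{E,F}$ on the quotient and uses only that a continuous real function is bounded on a compact set. None of these three points arise. Your argument also applies verbatim to both the product and the strong topology, and needs no Hausdorff hypothesis.
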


\begin{proof}
    Consider a sequence of stability conditions $(\sigma_k)_{k\in \bf{N}}$ which admits a relatively massless pair $(E,F)$. If $(\d \overline{m}_{\sigma_k})_{k\in \bf{N}}$, or any of its subsequences, had a limit point $\overline{\mu}$, then there would exist a lift $\mu \in \mathscr{M}(\bf{R})^{\mathfrak{d}}$ such that $\int_{\bf{R}} \d\mu_E = 0$, which is impossible. 
\end{proof}

\begin{ex}
    It is an exercise to verify that the pair $(S_1,S_2)$ consisting of both simple represent\-ations of the $A_2$ quiver is relatively massless in $\DCoh(\rep\:A_2)$. Thus, by \Cref{L:noncompact} even in simple examples one cannot expect the image of $\d \overline{m}$ to be pre-compact. 
\end{ex}

\begin{rem}
    Many works in the literature address the issue of relatively massless objects, and construct enlargements $\Stab(\cD)$ or $\Stab(\cD)/\bf{C}$ where massless objects are allowed ---  cf. \cites{ThurstonBDL,Bolognesecompactification,laxstability}. In the case of \cite{augmented}, relatively massless objects sometimes give rise to \emph{non-admissible} boundary points. It should also be possible to add limit points to the corresponding sequences in the space of measures by allowing expressions of the form $\sum a_i \cdot \delta_{\theta_i}$ for $a_i \in \bf{R}_{\ge 0}$, but we will not pursue this here. 

    By \Cref{L:continuousfunctionsstrong}, for any pair of objects $E,F$ the function $\phi_\sigma(F) - \phi_\sigma(E)$ extends to continuous function on $\mathscr{M}(\bf{R})^{\mathfrak{d}}/\bf{C}$. Consequently, there are other non-compact directions arising from seq\-uences $(\sigma_k)_{k\in \bf{N}}$ for which $\lim_{k\to \infty} \lvert \phi_{\sigma_k}(E) - \phi_{\sigma_k}(F)\rvert = \infty$. Some well-behaved sequences (or paths) of this type, called \emph{quasi-convergent} in \cite{quasiconvergence}, have limit points given by admissible augmented stability conditions. 
\end{rem}

For any set of (isomorphism classes of) objects $\mathfrak{s}$, there is a natural continuous map $\mathscr{M}(\bf{R})^{\mathfrak{d}}/\bf{C} \to \mathscr{M}(\bf{R})^{\mathfrak{s}}/\bf{C}$ induced by projection. Furthermore, for each $t\in \bf{R}$ there is a continuous map
\[
    I_t:\mathscr{M}(\bf{R})^{\mathfrak{s}}/\bf{C}\to \bf{P}^\mathfrak{s}_{\bf{R}}
\]
defined by $(\mu_s)_{s\in \mathfrak{s}}\mapsto [\int_{\bf{R}}e^{t\theta}\:\d\mu_s(\theta):s\in \mathfrak{s}]$. Thus, for any choice of $t_1<\cdots<t_n$, we obtain a commutative diagram: 
\[
    \begin{tikzcd}
        \Stab(\cD)/\bf{C}\arrow[r,"\d \overline{m}^{\mathfrak{s}}"] \arrow[dr,"\prod m^{t_i}",swap]& \mathscr{M}(\bf{R})^{\mathfrak{s}}/\bf{C}\arrow[d,"\prod I_{t_i}"] \\
         & (\mathbf{P}^{\mathfrak{s}}_{\bf{R}})^n
    \end{tikzcd}
\]
so that integration carries the image of the measure theoretic embedding onto the image of the Thurston maps. If we choose $\mathfrak{s}$ such that $\d \overline{m}^{\mathfrak{s}}$ is a homeomorphism onto its image (possible by \Cref{T:strongembedding}) then $\prod m^{t_i}$ is a homeomorphism onto its image if and only if $\prod I_{t_i}$ restricts to a homeomorphism on the image of $\d \overline{m}^{\mathfrak{s}}$. In light of the discussion above, the following seems to be a reasonable intermediate step towards determining whether or not the Thurston map $\prod m^{t_i}$ is an embedding:

\begin{quest}
    Are the functions $\phi^+$ and $\phi^-$ continuous with respect to the topology on the image of $\prod m^{t_i}:\Stab(\cD)/\bf{C}\to (\bf{P}^{\mathfrak{s}}_{\bf{R}})^n$?
\end{quest}

\section{Triangle inequality for deformed mass functions}
\label{S:triangleinequality}

Throughout this section, we fix a pre-stability condition $\sigma$ on a $k$-linear triangulated category $\cD$ and $t\in \bf{R}$. For any $z \in \bC^*$, we let $g_t(z) := |z| e^{t \phi(z)}$, where $\phi(z) \in (-1,1]$ is the unique number such that $z = |z| e^{i \pi \phi(z)}$. We will consider sequences $(z_1,\ldots,z_n)$ ordered so that $\phi(z_1) \ge \cdots \ge \phi(z_n)$, and for any such sequence we let $g_t(z_1,\ldots,z_n) := \sum_{i} g_t(z_i)$. Given such a sequence, we let $\Sigma z_i := \sum_{j=1}^i z_i$ and $\Sigma z_0 := 0$. Note that one can subdivide a sequence
\[
(\ldots, z_{i-1},z_i,z_{i+1},\ldots) \mapsto (\ldots,z_{i-1},\lambda z_i, (1-\lambda) z_i,z_{i+1},\ldots)
\]
for any $\lambda \in (0,1)$ without changing the value of $g_t$. The function $g_t$ is also additive under concatenations of sequences, as long as $\phi(z_i)$ is still monotone decreasing for the concatenated sequence.

Our key lemma is the following, which removes the requirement in \cite{ikeda_2021}*{Lem. 3.7} that the $w_i,z_i \in \bH \cup \bR_{<0}$. See \Cref{fig:enter-label} for a visualization. 

\begin{lem}\label{L:gt_inequality}
Let $(z_1,\ldots,z_n)$ and $(w_1,\ldots,w_m)$ be sequences as above such that $\Sigma z_n = \Sigma w_m$. Let $r= \max\{j : \phi(z_j)>0\}$ and $s=\max\{j : \phi(w_j)>0\}$. Then $g_t(z_\bullet) - g_t(w_\bullet) \geq 0$ if both of the following conditions hold:
\begin{enumerate}
    \item $\forall\:i=1,\ldots,s$, $\Sigma w_i$ lies in the Minkowski sum $P_+ := \bR_{\geq 0} + [0,1] \cdot z_1 + \cdots + [0,1] \cdot z_r$, and 
    
    \item $\forall\:i=s,\ldots,m$, $\Sigma w_i$ lies in $P_- := \Sigma z_r + \bR_{\leq 0} + [0,1] \cdot z_{r+1} + \cdots + [0,1] \cdot z_n$. \vspace{-2mm}
\end{enumerate}
\end{lem}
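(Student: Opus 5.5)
The plan is to split both sequences at the real axis, treat the upper and lower pieces separately by a convexity argument, and then glue the two pieces together. The basic analytic input is that $g_t$ is \emph{sublinear} on each closed half-plane. Concretely, write $z = \rho e^{i\pi\phi}$, so that $g_t(z) = \rho\, h(\phi)$ with $h(\phi) = e^{t\phi}$, a positively homogeneous function of degree one. Such a function is convex on a sector of angle at most $\pi$ if and only if $h'' + \pi^2 h \ge 0$. Here $h'' + \pi^2 h = (t^2 + \pi^2)e^{t\phi} > 0$.

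\textbf{Step 1: sublinearity on each half-plane.} Using the criterion above, $g_t$ is convex and positively homogeneous on $\{\phi \in [0,1]\}$, extended by continuity to $\bR_{>0}$ with phase $0$. It is likewise convex on $\{\phi \in [-1,0]\}$, extended by continuity to $\bR_{<0}$ with phase $-1$. Hence the triangle inequality $g_t(u+v) \le g_t(u) + g_t(v)$ holds whenever $u$ and $v$ lie in the same closed half-plane, with the convention on phases of the real rays fixed as above. Note the discontinuity of $g_t$ on $\bR_{<0}$: there $g_t(-x) = xe^t$, whereas the lower-half-plane convention gives $xe^{-t}$. The real ray $\bR_{<0}$ therefore has to be handled through the boundary rays of $P_\pm$, which is why the hypotheses are phrased with $\bR_{\ge 0}$ in $P_+$ and $\bR_{\le 0}$ in $P_-$.

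\textbf{Step 2: the upper piece.} The set $P_+$ is convex. Its boundary, traversed from $0$, consists of the segments $z_1, \dots, z_r$, whose phases decrease in $(0,1]$, followed by the ray $\Sigma z_r + \bR_{\ge 0}$; the remaining part of the boundary is the ray $\bR_{\ge 0}$ from $0$. The $w$-path $0 \to \Sigma w_1 \to \cdots \to \Sigma w_s$ is a convex polygonal path with decreasing phases lying inside $P_+$. I would prove
\[
g_t(w_1, \dots, w_s) \le g_t(z_1, \dots, z_r) + c_+ ,
\]
where $c_+$ is the $g_t$-length of the segment joining $\Sigma z_r$ to $\Sigma w_s$ along the boundary of $P_+$.

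The proof is the classical statement that a convex curve inside a convex region is shorter than the boundary arc with the same endpoints, for a convex gauge. I would prove it by ``cutting'': extend the last segment of the $w$-path until it meets the boundary of $P_+$. Then use the triangle inequality of Step 1 to replace the boundary arc that is cut off by the chord. Induction on the number of segments finishes the argument. The decreasing-phase hypothesis keeps all the relevant vectors in the closed upper half-plane, so Step 1 applies at each cut.

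\textbf{Step 3: the lower piece.} I would do the same for $i = s, \dots, m$ inside $P_-$. To reduce to Step 2, rotate by $z \mapsto -z$: this shifts phases by $1$ and rescales $g_t$ by $e^{\pm t}$ uniformly, so the inequality is preserved. The result is an inequality for the lower piece with a corresponding correction term $c_-$ along the boundary of $P_-$.

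\textbf{Step 4: gluing.} Hypotheses (1) and (2) at $i = s$ force $\Sigma w_s \in P_+ \cap P_-$. Since $\Sigma z_n = \Sigma w_m$, the two correction terms $c_+$ and $c_-$ are $g_t$-lengths of the same boundary piece through $\Sigma z_r$, traversed in opposite roles. The goal is to show that these correction terms cancel, or at least have the correct sign, when the two inequalities are added. This gives $g_t(w_\bullet) \le g_t(z_\bullet)$.

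I expect this gluing step to be the main obstacle. The difficulty is exactly the phase discontinuity on $\bR_{<0}$ noted in Step 1, and the fact that $P_+ \cap P_-$ need not be a single point. If direct cancellation fails, my first fallback is to subdivide $z_r$ and $z_{r+1}$, which does not change $g_t$, and insert an auxiliary real segment. This reduces the statement to two instances of Ikeda's lemma \cite{ikeda_2021}*{Lem. 3.7}, one in each half-plane after rotation.
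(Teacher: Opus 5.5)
\textbf{Gap: the gluing in Step 4 cannot work as set up.} Your Step 1 and the cutting argument in Step 2 are fine for what they cover; the problem is how you combine the two halves. Your Steps 2 and 3 each put a nonnegative correction $c_\pm$ on the $z$-side, so adding them gives only $g_t(w_\bullet)\le g_t(z_\bullet)+c_++c_-$, and nothing cancels. Separately, $c_+$ is not well defined as stated, since $\Sigma w_s$ is typically an interior point of $P_+$.

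To get cancellation you need an auxiliary vector $v$ that occurs on the $w$-side of one piece and on the $z$-side of the other. It must be traversed in the \emph{same} direction in both, because $g_t(-v)$ and $g_t(v)$ differ by a factor $e^{\pm t}$. The segment joining $\Sigma w_s$ and $\Sigma z_r$ cannot play this role. Take $z_\bullet=(i,\,1-i)$ and $w_\bullet=(\tfrac14(1+i),\,\tfrac14(3-i))$, which satisfy (1) and (2) with $r=s=1$, and put $a=\Sigma z_1-\Sigma w_1=\tfrac14(-1+3i)$. At $t=0$ one has $|w_1|+|a|=\tfrac14(\sqrt2+\sqrt{10})>1=|z_1|$ and $|a|+|w_2|=\tfrac12\sqrt{10}>\sqrt2=|z_2|$. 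So the connector can be moved to the $w$-side of neither half, and the $z$-curve must be cut somewhere other than $\Sigma z_r$.

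Your fallback does not repair this. The horizontal line through $\Sigma w_s$ meets the $z$-curve at points $b_L,b_R$ lying on arbitrary $z_j$, not necessarily on $z_r,z_{r+1}$. Cutting there does give two genuinely one-sided comparisons. But it leaves a third: $g_t(b_R-b_L)$ against the $z$-arc from $b_L$ over $\Sigma z_r$ to $b_R$, whose phases straddle $0$. That comparison is not an instance of Ikeda's Lemma 3.7 in either half-plane.

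\textbf{What is missing, and how the paper handles it.} You need an estimate that works across the real axis. The tool is the two-vector inequality $g_t(u_1+u_2)\le g_t(u_1)+g_t(u_2)$ whenever $\phi(u_2)\le\phi(u_1)<\phi(u_2)+1$, obtained by rotating into $\bH\cup\bR_{<0}$. Your own criterion $h''+\pi^2h>0$ already gives convexity on every sector of opening at most $\pi$ within the phase range $(-1,1]$, not just on the two half-planes.

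The paper's main proof uses this in a direct induction on $m+n$, with no splitting. If $\phi(z_1+z_2)>\phi(w_1)$, it merges $z_1,z_2$. Otherwise it cuts at the point $z_1+az_2=bw_1$ where $\bR_{\ge0}w_1$ meets $z_1+[0,1]z_2$, shows $b\ge1$, and peels off $w_1$. The substance is checking, subcase by subcase, that (1) and (2) are inherited.

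The appendix gives a split-and-glue proof closer to yours. There the cut runs along the ray $\Sigma w_s+\bR_{>0}w_s$ to the point $\Sigma z_p$ where it meets the $z$-curve. The vector $v=\Sigma z_p-\Sigma w_s$ has phase $\phi(w_s)$, so it can be appended to $(w_1,\ldots,w_s)$ and prepended to $(z_{p+1},\ldots,z_n)$, and it cancels exactly. The price is that one of the two $z$-pieces is in general still mixed. The one-sided cases therefore have to be proved for arbitrary $z$-paths by a further induction, which your half-plane convexity argument does not provide.
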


\begin{figure}
    \centering
    \begin{tikzpicture}
        \coordinate (v1) at (-3,0.5);
        \coordinate (v2) at (0,-1);
        \draw (v2) edge[->] node[auto] {$w_1$} (-1.5,-0.5)
        	(-1.5,-0.5) edge[->] node[auto] {$w_2$} (-2.5,1.5) 
        	(-2.5,1.5) edge[->] node[auto] {$w_3$} (-1,4) 
        	(-1,4) edge[->] node[auto] {$w_4$} (1,3.5) 
        	(1,3.5) edge[->] node[auto] {$w_5$} (1.5,2.5) 
        	(1.5,2.5) edge[->] node[auto] {$w_6$} (0,1.5) 
        	(0,1.5) edge[->] node[auto] {$w_7$} (v1);
        
        \draw (1,1) edge[->] node[auto] {$z_9$} (v1)
        	(3,3) edge[->] node[auto] {$z_8$} (1,1)
        	(-0.5,5.5) edge[->] node[auto] {$z_7$} (3,3)
        	(-3,5) edge[->] node[auto] {$z_6$} (-0.5,5.5) 
        	(-4.5,4) edge[->] node[auto] {$z_5$} (-3,5)
        	(-5,2.5) edge[->] node[auto] {$z_4$} (-4.5,4)
        	(-3.5,-0.5) edge[->] node[auto] {$z_3$} (-5,2.5)
        	(-2,-1) edge[->] node[auto] {$z_2$} (-3.5,-0.5)
        	(v2) edge[->] node[auto] {$z_1$} (-2,-1);
    \end{tikzpicture}
    \caption{An example of the kind of curve to which \Cref{L:gt_inequality} applies. Conditions (1) and (2) of the lemma can be interpreted as follows: If $\gamma_z$ is the piecewise linear curve connecting $0,\Sigma z_1,\Sigma z_2,\ldots, \Sigma z_n$, and $\gamma_w$ is the analogous curve for $w_\bullet$, then the section of the curve $\gamma_z$ over which $\Im(-)$ is increasing (resp. decreasing) must lie to the left (resp. right) of the section of $\gamma_w$ where $\Im(-)$ is increasing (resp. decreasing).}
    \label{fig:enter-label}
\end{figure}

\begin{proof}
We will make use of \cite{ikeda_2021}*{Lem. 3.6}, which states that $g_t(u_1 + u_2) \leq g_t(u_1) + g_t(u_2)$ for any $u_1,u_2 \in \bH \cup \bR_{<0}$. However, if $u_1,u_2 \in \bC^*$ are any two points such that $\phi(u_2) \leq \phi(u_1) < \phi(u_2) + 1$, then we can multiply $u_1$ and $u_2$ by $e^{i\theta}$ for some $\theta \geq 0$ so that both lie in $\bH \cup \bR_{<0}$, and $g_t(u_1)$, $g_t(u_2)$, and $g_t(u_1+u_2)$ are all scaled by $e^{t\theta}$. We therefore have $g_t(u_1+u_2) \leq g_t(u_1) + g_t(u_2)$ in this more general setting, and we refer to this as the triangle inequality for $g_t$.

We will now prove \Cref{L:gt_inequality} by induction on $m+n$, proceeding in several cases.

\medskip
\noindent{\textit{Case 1, $m =0$:}} The claim is automatic, because $g_t(z_\bullet) \geq 0$ for any path. We assume $m\geq 1$ for the remaining cases.

\medskip
\noindent{\textit{Case 2, $\phi(z_1) = \phi(z_2)$}:} We simply replace $z_\bullet$ with $(z_1+z_2,z_3,\ldots)$, and the claim follows by the inductive hypothesis.

\medskip
\noindent{\textit{Case 3, $\phi(z_1)=\phi(w_1)$:}} We must have $w_1 = a z_1$ for some $a \in (0,1]$ in order for condition (1) to hold in case $\phi(w_1)>0$ or condition (2) to hold in case $\phi(w_1)\leq 0$. Therefore if $\phi(z_1)=\phi(w_1)$, we have $g_t(z_\bullet)-g_t(w_\bullet) = g_t((1-a)z_1,z_2,\ldots)-g_t(w_2,w_3,\ldots)$, and the claim follows by the inductive hypothesis, or by the $m=0$ case.

\medskip
\noindent{\textit{Simplifying assumptions:}}
\medskip

For the remainder of the proof, we may assume that cases 1, 2, and 3 do not occur. We now show that this implies the following:
\[
    \phi(w_1) < \phi(z_1) \text{ and } \phi(z_1)>\phi(z_2)>\phi(z_1)-1.
\]
In particular, $\phi(z_1) > \phi(z_1+z_2) > \phi(z_2) \geq \phi(z_3)$, and $z_1$ and $z_2$ span $\bC$ as a real vector space.

To see why $\phi(w_1) \leq \phi(z_1)$, and hence $\phi(w_1) < \phi(z_1)$, consider first the case where $\phi(w_1) > 0$. Condition (1) then implies $\phi(z_1)>0$ and $w_1 \in P_+ \subset \bR_{\geq 0} + R_{\geq 0} \cdot z_1$, and this is only possible if $\phi(w_1) \leq \phi(z_1)$. On the other hand, in the case where $\phi(w_1) \leq 0$ and $\phi(z_1)\leq 0$, condition (2) implies that $w \in P_- \subset \bR_{\leq 0}+\bR_{\geq 0} \cdot z_1$, and this also can only happen if $\phi(z_1) \geq \phi(w_1)$.

To see why $\phi(z_2)>\phi(z_1)-1$, assume instead that $\phi(z_2)\leq \phi(z_1)-1$. We must then have $\phi(z_1)>0$ and $\phi(z_2)\leq 0$. Condition (1) implies that $\Sigma w_s\in \bR_{>0}+[0,1] \cdot z_1$, where the strict inequality comes from our previous observation that $\phi(w_1) < \phi(z_1)$. This contradicts condition (2), because the latter implies that $\Sigma w_s \in P_- \subset z_1 + \bR_{\geq 0} \cdot z_2 + \bR_{\leq 0}$, and $(\bR_{>0}+[0,1] \cdot z_1) \cap (z_1 + \bR_{\geq 0} \cdot z_2 + \bR_{\leq 0}) = \varnothing$ if $\phi(z_2) \leq \phi(z_1)-1$.

\medskip
\noindent\textit{Case 4, $\phi(z_1+z_2) \leq \phi(w_1)$:} In this case, the ray $\bR_{\geq 0} \cdot w_1$ hits the interval $z_1 + [0,1] \cdot z_2$ at a unique point $z_1 + a z_2 = b w_1$ for some $a \in (0,1]$ and $b \geq 0$. We claim that $b \geq 1$: Indeed, if $\phi(z_2)>0$, then $b<1$ would imply that $w_1$ lies outside the region $\bR_{\geq 0} + [0,1] \cdot z_1 + \bR_{\geq 0} \cdot z_2$, which would violate condition (1). On the other hand, if $\phi(z_2) \leq 0$, then $b<1$ would imply that $w_1$ lies outside the region $z_1 + \bR_{\leq 0} + \bR_{\geq 0} \cdot z_2$ if $\phi(z_1) > 0$, or the region $\bR_{\leq 0}+[0,1]\cdot z_1 + \bR_{\geq 0}\cdot z_2$ if $\phi(z_1)\leq 0$, and in either case it would violate condition (2).

We then write
\begin{align*}
g_t(z_\bullet)-g_t(w_\bullet) &= [g_t(z_1,a z_2) - g_t(z_1+a z_2)] \\
&+ [g_t((b-1)w_1,(1-a) z_2,z_3,\ldots)-g_t(w_2,w_3,\ldots)].
\end{align*}
The first square bracketed term is nonnegative by the triangle inequality for $g_t$, and the second square bracketed term is nonnegative by either the $m=0$ case or the inductive hypothesis, once we have verified that the smaller curve satisfies conditions (1) and (2).

\begin{enumerate}[leftmargin=*,label=(\roman*)]
    \item \textit{Subcase $\phi(z_2)>0$:} For the new curve, $P_+$ is replaced by $-w_1 + P_+ \cap (\bR_{\geq 0} \cdot w_1 + \bR_{\geq 0})$, and $P_-$ is replaced by $-w_1+P_-$. At the same time $\Sigma w_i$ is replaced by $\Sigma w_i-w_1$ for all $i$. It is immediate that condition (2) holds of the new curve, and condition (1) holds for the new curve because in the original curve $\Sigma w_i \in w_1 + \bR_{\geq 0} \cdot w_1 + \bR_{\geq 0}$.
    
    \item \textit{Subcase $\phi(z_2)\leq 0$ and $\phi(w_1)\leq 0$:} In this case condition (1) is vacuous for both the old and new curves. $P_-$ for the old curve is replaced by $-w_1 + P_- \cap (\bR_{\geq 0}\cdot w_1 + \bR_{\leq 0})$, and $\Sigma w_i$ is replaced by $\Sigma w_i-w_1$, so condition (2) holds for the new curve.

    \item \textit{Subcase $\phi(z_2)\leq 0$ and $\phi(w_1)>0$:} In this case, $P_+ = \bR_{\geq 0} + [0,b-1] \cdot w_1$. Because $\phi(w_i)\leq \phi(w_1)$ for all $i$, condition (1) is equivalent to $\Im(\Sigma w_i) \leq \Im(b \cdot w_1)$ for $i=1,\ldots,s$. Because $\Im(\Sigma w_s)$ is maximal among $\Im(\Sigma w_i)$, this is equivalent to $\Im(\Sigma w_s) \leq \Im(b w_1)$. This holds because condition (1) for the original curve implies $\Sigma w_s \in P_- \cap ([0,b] \cdot w_1 + \bR_{\geq 0}) \subset (\bR \cdot w_1 + \bR_{\geq 0}) \cap (b w_1 + \bR \cdot z_2 + \bR_{\leq 0})$, and $b w_1$ is the unique maximizer for $\Im(z)$ in the latter region.
    
    Next, observe that $P_-$ for the old curve is replaced by $-w_1 + P_- \cap \{z \in \bC : \Im(z) \leq \Im(b w_1)\}$, and the $\Sigma w_i$ are replaced by $\Sigma w_i - w_1$. We have already observed that $\Im(\Sigma w_s)$ is maximal among $\Im(\Sigma w_i)$, and $\Im(\Sigma w_s) \leq \Im(b w_1)$ for the old curve. It follows that condition (2) holds for the new curve.
\end{enumerate}

\noindent\textit{Case 5, $\phi(z_1+z_2)>\phi(w_1)$:} In this case we write
\begin{align*}
g_t(z_\bullet)-g_t(w_\bullet) &= [g_t(z_1,z_2)-g_t(z_1+z_2)] \\
&+ [g_t(z_1+z_2,z_3,\ldots)- g_t(w_1,w_2,\ldots)].
\end{align*}
The first bracketed term is nonnegative by the triangle inequality for $g_t$. The second bracketed term will be nonnegative by the inductive hypothesis, and the proof complete, once we verify that the paths $(z_1+z_2,z_3,\ldots)$ and $(w_1,w_2,\ldots)$ still satisfy the conditions (1) and (2). We do this in three cases:

\begin{enumerate}[leftmargin=*,label=(\roman*)]
    \item \textit{Subcase $\phi(z_2) > 0$:} $P_-$ for the new curve is unchanged, so condition (2) continues to hold. $P_+$ is replaced by the closure of $P_+ \setminus \Delta$, where $\Delta$ is the triangle with vertices $0,z_1,z_1+z_2$. Because $\phi(\Sigma w_1) \geq \cdots \geq \phi(\Sigma w_s)$ and $\phi(\Sigma w_1) = \phi(w_1) < \phi(z_1+z_2)$, none of these points lie in $\Delta$, so the condition (1) holds.

    \item \textit{Subcase $\phi(z_1+z_2) \leq 0$:} Here $\phi(w_i)<0$ for all $i$. The condition (1) is vacuous, because $\phi(\Sigma w_i) < 0$ for all $i \geq 1$ as well. The condition (2) only depends on $P_- \cap \{z \in \bC:\Im(z) \leq 0\}$, and for the new curve this region is replaced with the closure of $(P_- \setminus \Delta) \cap \{z \in \bC:\Im(z) \leq 0\}$. Because $\phi(\Sigma w_i) \leq \phi(w_1) < \phi(z_1+z_2)$ for all $i$, none of the $\Sigma w_i$ lies in $\Delta$, so condition (2) continues to hold.

    \item \textit{Subcase $\phi(z_1+z_2) > 0 \geq \phi(z_2)$:} Because $\phi(\Sigma w_i) < \phi(z_1+z_2)$ for $i=1,\ldots,s$, the condition (1) for the new curve is equivalent to requiring that $\Im(\Sigma w_i)\leq \Im(z_1+z_2)$. Because $\Im(\Sigma w_s)$ is maximal among all $\Im(\Sigma w_i)$, this is equivalent to $\Im(\Sigma w_s) \leq \Im(z_1+z_2)$. But condition (2) implies that $\Sigma w_s$ lies in both the region $z_1+\bR_{\geq 0} z_2 + \bR_{\leq 0}$, which combined with $\phi(\Sigma w_s) < \phi(z_1+z_2)$ implies that $\Im(\Sigma w_s) \leq \Im(z_1+z_2)$.

    We have seen that $\Im(\Sigma w_i) \leq \Im(\Sigma w_s) \leq \Im(z_1+z_2)$ for all $i$. So condition (2) for the new curve holds, since $P_- \cap \{z \in \bC : \Im(z) \leq \Im(z_1+z_2)\}$ is the same for the new curve and the original curve. \qedhere
\end{enumerate}
\end{proof}

\begin{lem}[Triangle inequality]\label{L:triangle_inequality}
For any exact triangle $E \to F \to G$ in $\cD$:
\begin{equation}
\label{E:triangle_inequality}
    0\le m_\sigma^t(E) + m_\sigma^t(G) - m_\sigma^t(F). 
\end{equation}

\end{lem}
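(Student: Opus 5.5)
The plan is to reduce \eqref{E:triangle_inequality} to \Cref{L:gt_inequality}. The sequence $z_\bullet$ will be the central charges of the HN factors of $E$ and $G$, merged and sorted by phase. The sequence $w_\bullet$ will be the central charges of the HN factors of $F$. Since $Z$ is additive on the triangle, $\Sigma z_n = Z(E)+Z(G) = Z(F) = \Sigma w_m$. Provided all the relevant phases lie in a window on which $g_t$ computes $e^{t\phi}\lvert Z\rvert$ correctly, we also have $g_t(z_\bullet) = m^t(E)+m^t(G)$ and $g_t(w_\bullet) = m^t(F)$. The lemma then gives exactly $m^t(F)\le m^t(E)+m^t(G)$.

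\textbf{Step 1: reduce to a window of width less than $2$.} Since $\phi(z)\in(-1,1]$ in the definition of $g_t$, all HN phases must first be placed in such a window, up to a common rotation (which rescales every $g_t$ by the same factor $e^{t\theta}$, as in the proof of \Cref{L:gt_inequality}). For this I would use the bounded t-structures $(\cP(a,\infty),\cP(-\infty,a])$. Truncating $E\to F\to G$ at a well-chosen $a$ and using the octahedral axiom, I expect to split the triangle into pieces whose phases lie in $(a-1,a+1]$, induct on the total number of HN factors, and recombine using that $m^t$ is additive on HN truncations: $m^t(X)=m^t(X^{>a})+m^t(X^{\le a})$. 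This step deserves care, because truncation of a triangle does not give a triangle of truncations. Extra terms arise from the connecting map $G^{>a}\to E^{\le a}[1]$, and these are exactly the terms appearing in \Cref{T:introD}. If this becomes unwieldy, the alternative is to filter $F$ by the octahedral axiom with factors $\HN_i(E)$ followed by $\HN_j(G)$. One then proves the more general claim that any filtration of $F$ with semistable subquotients $A_1,\dots,A_k$, in any phase order, satisfies $m^t(F)\le\sum m^t(A_i)$, by induction on $k$. This reduces everything to the two-term case $A\to B\to C$ with $A,C$ semistable, where the phase spread is controlled by $\phi(A),\phi(C)$ and one can shift so the window condition holds.

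\textbf{Step 2: verify conditions (1) and (2) of \Cref{L:gt_inequality}.} After rotating so that the heart is $\cP(0,1]$ (or $\cP(-1,0]$ for the negative part), the partial sums $\Sigma w_i$ are $Z$ of the HN truncations $F^{\ge\phi_i}$. Take the long exact cohomology sequence of the triangle in the heart, as in the proof of \Cref{T:measuretheoreticembedding}. Each such truncation maps to, or is built from, subobjects of $H_0(E)$ and quotients of $H_0(G)$, and similarly for $H_{-1}$. Hence $Z(F^{\ge\phi_i})$ lies in the Minkowski sum of $\bR_{\ge0}$ with the segments $[0,1]\cdot z_j$ for the positive-phase $z_j$. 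This is the standard fact that the central charge of a subobject lies inside the HN polygon, a convexity property of HN polygons. That gives (1), and the dual argument with quotients and $\bR_{\le0}$ gives (2).

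The main obstacle is Step 1. Controlling the phase spread so that the principal-argument convention in $g_t$ is valid, while keeping the recombination of truncated triangles exact, is precisely where the gap in \cite{ikeda_2021} lies, so I would spend the most care there.
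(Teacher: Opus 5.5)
Your choice of $z_\bullet$ and $w_\bullet$ and the idea of Step 2 match the paper's. Step 1, however, is a genuine gap, and the fallback you offer for it does not work.

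In the filtration route, the inductive step for $F_{k-1}\to F\to A_k$ needs $m^t(F)\le m^t(F_{k-1})+m^t(A_k)$ with $F_{k-1}$ not semistable, which is the lemma itself. Replacing $F_{k-1}$, or an out-of-order adjacent pair $A_i,A_{i+1}$, by HN factors can increase the number of factors. So induction on $k$ never bottoms out at the two-term semistable case, and you give no other invariant that would make the process terminate. In the truncation route you expect correction terms coming from $G^{>a}\to E^{\le a}[1]$. Bounding them via \Cref{L:mass_additivity} or \Cref{C:reverse_triangle_inequality} would be circular, since both are deduced from the lemma you are proving.

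The missing idea is an octahedral decomposition that produces no correction terms at all. Put $E_1:=\fib(F\to G^{\le a})$. As in the proof of \Cref{L:mass_additivity}, there are exact triangles $E\to E_1\to G^{>a}$ and $E_1^{\le a}\to F^{\le a}\to G^{\le a}$, with $E_1^{>a}\cong F^{>a}$. Additivity of $m^t$ on truncations then gives $m^t(F)\le m^t(E_1)+m^t(G^{\le a})\le m^t(E)+m^t(G)$ from the two smaller triangles. Inducting on the number of HN factors of $G$ reduces to the case where $G$ is semistable of some phase $\psi$.

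Next use the dual construction $G_1:=\Cone(E^{>a}\to F)$. Taking $a=\psi$ first reduces to $E\in\cP(\le\psi)$. Then take $a\in(\psi-2,\psi-1)$: here $\Hom(G,E^{\le a}[1])=0$ forces $G_1\cong E^{\le a}\oplus G$. This reduces to a triangle lying entirely in $\cP(a,\psi]$, and there your Step 2 goes through. Two corrections to your sketch of Step 2: for $\phi_i>0$, $F^{\ge\phi_i}$ is an extension of a \emph{subobject} (not a quotient) of $H_0(G)$ by a subobject of $H_0(E)$; and for condition (2) you must track the image of the connecting map $H_0(G)\to H_{-1}(E)$.

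The paper does not redo this reduction. It keeps Ikeda's reduction to a short exact sequence $0\to A\to B\to C\to 0$ in $\cP_\sigma(0,1]$, i.e.\ the triangle $C[-1]\to A\to B$. In that case condition (1) holds because $Z(A^{\ge\phi_i})$ is the charge of a subobject of $B$, and condition (2) is automatic.

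Finally, Ikeda's gap is not the phase-window reduction, as you suggest. It is the application of a polygon inequality proved only for $z_i$ in the upper half-plane to sequences containing the vectors $-Z(\HN_j(C))$. That is exactly what \Cref{L:gt_inequality} repairs.
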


\begin{proof}
The gap in the proof of \cite{ikeda_2021}*{Prop. 3.3} occurs in the proof of \cite{ikeda_2021}*{Lem. 3.8},\endnote{\cite{ikeda_2021}*{Lem. 3.7} is only formulated and proven for $z_i$ in the upper half-plane, but the proof of \cite{ikeda_2021}*{Lem. 3.8} invokes the previous lemma in situations where $z_i$ is not in the upper half plane.} which establishes \eqref{E:triangle_inequality} for an exact triangle $C[-1] \to A \to B$ coming from a short exact sequence $0 \to A \to B \to C \to 0$ in the heart $\cP_\sigma(0,1]$. Here we will just repair the proof of that lemma.

Let $M$, $N$, and $P$ be the length of the HN filtration of $A$, $B$, and $C$, respectively. We let $w_i := Z_\sigma(\HN_i(A))$ for $i=1,\ldots, M$ and
\[
z_i := 
    \left \{ \begin{array}{ll} Z_\sigma(\HN_i(B)),    & \text{if } i=1,\ldots,N \\
    - Z_\sigma(\HN_{i-N}(C)), & \text{if } i=N+1,\ldots,N+P. \end{array} \right. 
\]
Then the tuples $(z_1,\ldots,z_{N+P})$ and $(w_1,\ldots,w_M)$ satisfy the hypotheses of \Cref{L:gt_inequality}, and thus
\[
0 \leq g_t(z_\bullet) - g_t(w_\bullet) = m^t_\sigma(B)+m^t_\sigma(C[-1]) - m^t_\sigma(A). \qedhere
\] 
\end{proof}

\subsection{Reverse triangle inequality}

Next, we prove a ``reverse'' version of \Cref{L:triangle_inequality}. Recall that associated to a slicing $\cP$ and a real number $a \in \bf{R}$ there is an associated pair of (homological) t-structures: $(\cP(>a),\cP(\le a))$ and $(\cP(\ge a),\cP(<a))$. In particular, there are projection functors 
\[
    \tau^{\star a}: \cD \to \cP(\star a) 
\]
where $\star\in \{\le, \ge, <,>\}$. Given an object $E$ of $\cD$, we write $\tau^{\star a}(E) = E^{\star a}$ for each such $\star$. We also write $\cD^{\star a} = \cP(\star a)$. In what follows, for a morphism $A\to B$ we let $\fib(A\to B) = \Cone(A\to B)[-1]$.

\begin{lem}\label{L:mass_additivity}
Let $E \to F \to G$ be an exact triangle in a triangulated category $\cD$ equipped with a pre-stability condition $\sigma$. Then for any $a, t\in \bf{R}$ one has
\[
    m^t(\Cone(E^{>a} \to F^{>a})) + m^t(\fib(F^{\leq a} \to G^{\leq a})) \leq m^t(E^{\leq a}) + m^t(G^{>a}).
\]
\end{lem}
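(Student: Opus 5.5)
The plan is to set $C := \Cone(E^{>a}\to F^{>a})$ and $K := \fib(F^{\le a}\to G^{\le a})$, use the octahedral axiom to fit both into exact triangles built from truncations of $E$ and $G$, and then apply \Cref{L:triangle_inequality}, together with additivity of $m^t$ over the HN decomposition, i.e.\ $m^t(X) = m^t(X^{>b}) + m^t(X^{\le b})$ for every $b$.

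\emph{First step: locate $C$ and $K$.} Since $\cP(>a)$ is closed under extensions and under $[1]$, we have $C\in\cP(>a)$; dually $K\in\cP(\le a)$. Because $E\to F\to G$ is exact, the composite $E^{>a}\to F^{>a}\to G^{>a}$ vanishes, which gives a map $C\to G^{>a}$; dually we get a map $E^{\le a}\to K$. Applying the $3\times 3$ lemma to the morphism from the truncation triangle of $E$ to that of $F$ produces a triangle $C\to G\to C'$ with $C' = \Cone(E^{\le a}\to F^{\le a})$. Since $C\in\cP(>a)$, the octahedral axiom gives $G^{\le a}\cong C'^{\le a}$ and an exact triangle
\[
C\to G^{>a}\to Y, \qquad Y \cong \tau^{>a}\bigl(E^{\le a}[1]\bigr).
\]
Here the last isomorphism uses the triangle $F^{\le a}\to C'\to E^{\le a}[1]$ together with $F^{\le a}\in\cP(\le a)$. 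In particular $Y\in\cP(a,a+1]$. The dual argument gives a triangle $X\to E^{\le a}\to K$ with $X\cong \tau^{\le a}(G^{>a}[-1])\in\cP(a-1,a]$.

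\emph{Second step: split off the parts that match exactly.} Truncating at $a+1$, the fact that $Y\in\cP(a,a+1]$ gives $C^{>a+1}\cong G^{>a+1}$. Dually, $K^{\le a-1}\cong E^{\le a-1}$. By additivity of $m^t$ over truncations, the inequality then reduces to the ``thin'' statement
\[
m^t(C_1) + m^t(K_1) \le m^t(G_1) + m^t(E_1),
\]
where $C_1 = C^{\le a+1}$, $K_1 = K^{>a-1}$, $G_1 = G^{(a,a+1]}$ and $E_1 = E^{(a-1,a]}$. Moreover, $C_1$ and $K_1$ should be identified, via the octahedral axiom again, as cone and fiber of a single morphism $\delta\colon G_1[-1]\to E_1$. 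Both source and target of $\delta$ lie in the abelian heart $\cP(a-1,a]$.

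\emph{Third step: the abelian computation, which I expect to be the main obstacle.} Since $C\in\cP(>a)$ and $K\in\cP(\le a)$, the heart cohomology should force $C_1\cong(\ker\delta)[1]$ and $K_1\cong\coker\delta$. The claim then becomes
\[
m^t(\ker\delta[1]) + m^t(\coker\delta) \le m^t(G_1[-1][1]) + m^t(E_1),
\]
to be derived from the short exact sequences $0\to\ker\delta\to G_1[-1]\to\im\delta\to 0$ and $0\to\im\delta\to E_1\to\coker\delta\to0$. This is where care is needed. \Cref{L:triangle_inequality} bounds the mass of a middle term from above, whereas here we need the mass of a sub or a quotient bounded by that of the middle term. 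So I would first try applying \Cref{L:triangle_inequality} to the rotated triangles, namely $\ker\delta[1]\to \im\delta[1]\to G_1[1]$ and $\im\delta\to E_1\to \coker\delta$, and then cancel the two $\im\delta$ contributions between them. If that cancellation fails, the fallback is to use the curve comparison of \Cref{L:gt_inequality} directly on the HN polygons of $\ker\delta$, $\coker\delta$, $G_1$ and $E_1$ inside the single heart.
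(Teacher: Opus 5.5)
\textbf{There is a genuine gap in your third step,} and that is the only place where an inequality is actually proved.

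Applying \Cref{L:triangle_inequality} to the correctly rotated triangles $\im\delta\to\ker\delta[1]\to G_1$ and $E_1\to\coker\delta\to\im\delta[1]$ gives
\[
m^t(\ker\delta[1])+m^t(\coker\delta)\le m^t(G_1)+m^t(E_1)+(1+e^{t})\,m^t(\im\delta).
\]
Here the two $\im\delta$ contributions enter with the same sign, so nothing cancels. To cancel them you would need a reverse inequality such as $m^t(\im\delta)+m^t(\coker\delta)\le m^t(E_1)$, and that fails in general. For example, take $S_2\subset E$ in $\rep A_2$ with $Z(S_1),Z(S_2)$ of equal length and nearly opposite. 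Then $E$ is stable of tiny mass, while $S_2$ and $S_1$ have large mass.

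Your fallback via \Cref{L:gt_inequality} can in fact be made to work. The partial sums are central charges of subobjects of $G_1$, respectively $Z(G_1)$ plus central charges of subobjects of $E_1$, so conditions (1) and (2) hold. But you only gesture at it, and checking it amounts to reproving the triangle inequality for one specific object.

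That object is the missing idea. The objects $\ker\delta[1]\in\cP(a,a+1]$ and $\coker\delta\in\cP(a-1,a]$ are exactly the two truncations of $\Cone(\delta)$. Moreover $\Cone(\delta)$ sits in an exact triangle $E_1\to\Cone(\delta)\to G_1$. So a single application of \Cref{L:triangle_inequality} gives
\[
m^t(\ker\delta[1])+m^t(\coker\delta)=m^t(\Cone\delta)\le m^t(E_1)+m^t(G_1).
\]

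Once you see this, the reduction to the thin case is unnecessary, and the paper argues directly:
\begin{enumerate}
\item Put $E':=\fib(F\to G^{\le a})$. The octahedral axiom gives a triangle $E\to E'\to G^{>a}$ and identifies $(E')^{>a}\cong F^{>a}$ and $(E')^{\le a}\cong K$.
\item Set $N:=\Cone(E^{>a}\to E')$. It sits both in a triangle $E^{\le a}\to N\to G^{>a}$ and in a triangle $C\to N\to K$.
\item Since $C\in\cP(>a)$ and $K\in\cP(\le a)$, the second triangle is the truncation triangle of $N$. Hence $m^t(C)+m^t(K)=m^t(N)\le m^t(E^{\le a})+m^t(G^{>a})$.
\end{enumerate}

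Separately, your identification $Y\cong\tau^{>a}(E^{\le a}[1])$ in the first step is false, because it ignores the connecting map $E^{\le a}[1]\to F^{\le a}[1]$. For $E=F$ and $G=0$ one has $Y=0$, while $E^{(a-1,a]}[1]$ need not vanish. The long exact sequence shows that $Y$ is the kernel of $E^{(a-1,a]}[1]\to F^{(a-1,a]}[1]$ in $\cP(a,a+1]$. So the property $Y\in\cP(a,a+1]$, which is all you actually use, still holds, and the same applies to $X$.
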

\begin{proof}
Define $E_1 := \fib(F \to G^{\leq a})$. The octahedral axiom applied to the composition $F \to G \to G^{\leq a}$ gives an exact triangle $E \to E_1 \to G^{>a}$. Next, the octahedral axiom applied to the composition $E_1^{>a} \to E_1 \to F$ gives an exact triangle
\[
    E_1^{\leq a} \to \Cone(E_1^{>a} \to F) \to G^{\leq a}. 
\]
This implies that $\Cone(E_1^{>a} \to F) \in \cC^{\leq a}$, and thus $E_1^{>a} = F^{>a}$ and $E_1^{\leq a} = \fib(F^{\leq a} \to G^{\leq a})$. Now, let $N:= \Cone(E^{>a} \to E_1)$. The octahedral axiom applied to the composition $E^{>a} \to E \to E_1$ gives an exact triangle
\begin{equation}\label{E:exact_triangle_for_mass_bound}
    E^{\leq a} \to N \to G^{>a}.
\end{equation}
On the other hand, the octahedral axiom applied to the composition $E^{>a} \to F^{>a}=E_1^{>a} \to E_1$ gives an exact triangle
\[
    \Cone(E^{>a} \to F^{>a}) \to N \to E_1^{\leq a}.
\]
Note that $\Cone(E^{>a} \to F^{>a}) \in \cC^{>a}$, so this second triangle agrees with the triangle $N^{>a} \to N \to N^{\leq a}$. In particular,
\[
    m^t(N) = m^t(\Cone(E^{>a} \to F^{>a})) + m^t(\fib(F^{\leq a} \to G^{\leq a})).
\]
The claim now follows from the triangle inequality applied to \eqref{E:exact_triangle_for_mass_bound}.
\end{proof}

\begin{cor}[Reverse triangle inequality]\label{C:reverse_triangle_inequality}
For any $a \in \bR$ and any exact triangle $E \to F \to G$ in $\cD$ one has
\[
m^t(E) + m^t(G) \leq m^t(F) + (1+e^{|t|}) \left(m^t(E^{\leq a}) +m^t(G^{>a}) \right).
\]
\end{cor}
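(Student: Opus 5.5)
The plan is to split every term along the t-structure $(\cP(>a),\cP(\le a))$ and then combine two applications of \Cref{L:triangle_inequality} with \Cref{L:mass_additivity}.

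\emph{Additivity over truncations.} Truncating the HN filtration at phase $a$, the HN factors of $X$ are the disjoint union of those of $X^{>a}$ and $X^{\le a}$. Hence for every object $X$,
\[
    m^t(X) = m^t(X^{>a}) + m^t(X^{\le a}).
\]
Applying this to $E$, $F$ and $G$ reduces the claim to
\[
    m^t(E^{>a}) + m^t(G^{\le a}) \le m^t(F^{>a}) + m^t(F^{\le a}) + e^{|t|}\bigl(m^t(E^{\le a}) + m^t(G^{>a})\bigr).
\]
Here the remaining summands $m^t(E^{\le a}) + m^t(G^{>a})$ on the left have already been absorbed into the coefficient $1+e^{|t|}$.

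\emph{Shift scaling.} Since $\cP(\phi)[1]=\cP(\phi+1)$, shifting moves all phases by $1$ and leaves the masses unchanged. Therefore $m^t(X[\pm 1]) = e^{\pm t} m^t(X) \le e^{|t|} m^t(X)$.

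\emph{Two triangle inequalities.} Set $C := \Cone(E^{>a}\to F^{>a})$ and $K := \fib(F^{\le a}\to G^{\le a})$, exactly as in \Cref{L:mass_additivity}.
\begin{itemize}
    \item Rotating $E^{>a}\to F^{>a}\to C$ gives an exact triangle $C[-1]\to E^{>a}\to F^{>a}$. \Cref{L:triangle_inequality} then yields $m^t(E^{>a}) \le m^t(C[-1]) + m^t(F^{>a}) \le e^{|t|} m^t(C) + m^t(F^{>a})$.
    \item Rotating $K\to F^{\le a}\to G^{\le a}$ gives an exact triangle $F^{\le a}\to G^{\le a}\to K[1]$. \Cref{L:triangle_inequality} then yields $m^t(G^{\le a}) \le m^t(F^{\le a}) + e^{|t|} m^t(K)$.
\end{itemize}
Adding these two bounds gives
\[
    m^t(E^{>a}) + m^t(G^{\le a}) \le m^t(F^{>a}) + m^t(F^{\le a}) + e^{|t|}\bigl(m^t(C) + m^t(K)\bigr).
\]

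\emph{Conclusion.} \Cref{L:mass_additivity} gives exactly $m^t(C)+m^t(K)\le m^t(E^{\le a}) + m^t(G^{>a})$. Substituting this into the last display gives the reduced inequality, and hence the corollary.

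The main point needing care is the bookkeeping of rotations: one must check that each rotated triangle has the middle term that we want to bound from above. The shift factors are harmless, since $|t|$ absorbs either sign. The substantive input is already contained in \Cref{L:mass_additivity}.
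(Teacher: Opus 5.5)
Your proof is correct and follows the paper's argument: the same two rotated triangles are fed into \Cref{L:triangle_inequality}, then combined with \Cref{L:mass_additivity} and additivity of $m^t$ over the truncation at $a$. The only difference is cosmetic: you bound $e^{\pm t}\le e^{|t|}$ before invoking \Cref{L:mass_additivity}, which avoids the paper's case split on the sign of $t$ and its additional use of $m^t(\fib(F^{\le a}\to G^{\le a}))\le m^t(E^{\le a})+m^t(G^{>a})$.
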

\begin{proof}
Applying \Cref{L:triangle_inequality} to the triangles $\Cone(E^{>a} \to F^{>a}) [-1] \to E^{>a} \to F^{>a}$ and $F^{\leq a} \to G^{\leq a} \to \fib(F^{\leq a} \to G^{\leq a})[1]$ gives
\[
m^t(F) \geq m^t(G^{\leq a}) + m^t(E^{>a}) - e^{-t} m^t(\Cone(E^{>a} \to F^{>a})) - e^t \fib(F^{\leq a} \to G^{\leq a}).
\]
Note that here we have used the identity $m^t(E[n]) = e^{tn}m^t(E)$ for all $n\in \bf{Z}$. In the case where $t \geq 0$, we apply \Cref{L:mass_additivity} to conclude
\begin{align*}
    m^t(F) &\geq m^t(G^{\leq a}) + m^t(E^{>a}) -e^{-t}(m^t(G^{>a})+m^t(E^{\leq a})) - (e^t-e^{-t}) m^t(\fib(F^{\leq a} \to G^{\leq a})) \\
    &\geq m^t(G^{\leq a}) + m^t(E^{>a}) -e^{t}(m^t(G^{>a})+m^t(E^{\leq a}))\\
    &=m^t(G)+m^t(E) - (1+e^t) (m^t(G^{>a})+m^t(E^{\leq a}))
\end{align*}
where the second inequality comes from the inequality $m^t(\fib(F^{\leq a} \to G^{\leq a})) \leq m^t(G^{>a})+m^t(E^{\leq a})$, and the fact that $e^t-e^{-t} \geq 0$ if $t \geq 0$. The $t\leq 0$ case is proved similarly.\endnote{The chain of inequalities is
\begin{align*}
m^t(F) &\geq m^t(G^{\leq a}) + m^t(E^{>a}) -(e^{-t}-e^{t})m^t(\Cone(F^{>a} \to G^{>a})) - e^{t}(m^t(G^{>a})+m^t(E^{\leq a}))\\
&\geq m^t(G^{\leq a}) + m^t(E^{>a}) -e^{-t}(m^t(G^{>a})+m^t(E^{\leq a}))\\
&=m^t(G)+m^t(E) - (1+e^{-t}) (m^t(G^{>a})+m^t(E^{\leq a})).
\end{align*}
This time we used that $e^{-t}-e^t \geq 0$ for $t\leq 0$.
}
\end{proof}

\section{Estimates for truncations}
\label{S:estimatesfortruncations}

This section is dedicated to proving the following:

\begin{thm} \label{T:filtration_inequality}
    Let $a_1<\cdots<a_{n-1}$, and let $\epsilon \in (0,1)$. Then there is a constant $C_{n,\epsilon,t}>0$, depending only on $n$, $\epsilon$, and $t$, such that for any $E \in \cD$ with a finite descending filtration $E = E_n \to E_{n-1} \to \cdots \to E_1 \to E_0 = 0$ with associated graded objects $F_i := \fib(E_i \to E_{i-1})$,
    \[
        m^t(E) \leq \sum_{i=1}^n m^t(F_i) \leq m^t(E) + C_{n,\epsilon, t} \left( \sum_{j=1}^{n-1} m^t(F_{j+1}^{\leq a_j}) + m^t(F_j^{>a_j-\epsilon}) \right).
    \]
\end{thm}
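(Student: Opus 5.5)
Both inequalities will be proved by induction on $n$. For $n=1$ there is nothing to prove, since $E=F_1$. The lower bound $m^t(E)\le\sum_i m^t(F_i)$ is an iterated application of \Cref{L:triangle_inequality}. We apply it to the triangles $F_i\to E_i\to E_{i-1}$, which give $m^t(E_i)\le m^t(F_i)+m^t(E_{i-1})$, and then sum over $i$. The real content is the upper bound, and for that we use \Cref{C:reverse_triangle_inequality} step by step along the filtration.

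For the inductive step we consider the triangle $F_n\to E_n\to E_{n-1}$ and apply \Cref{C:reverse_triangle_inequality} with cutoff $a=a_{n-1}$. This gives
\[
m^t(F_n)+m^t(E_{n-1})\le m^t(E)+(1+e^{|t|})\bigl(m^t(F_n^{\le a_{n-1}})+m^t(E_{n-1}^{>a_{n-1}})\bigr).
\]
The first error term $m^t(F_n^{\le a_{n-1}})$ is already one of the allowed terms. We then apply the inductive hypothesis to $E_{n-1}$ with its filtration by $E_{n-2},\dots,E_0$ and cutoffs $a_1<\dots<a_{n-2}$. This bounds $\sum_{i<n}m^t(F_i)$ by $m^t(E_{n-1})$ plus allowed terms. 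It remains to control $m^t(E_{n-1}^{>a_{n-1}})$ by the terms $m^t(F_j^{>a_j-\epsilon})$ for $j\le n-1$, together with the already allowed terms $m^t(F_{j+1}^{\le a_j})$ for $j\le n-2$.

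The key auxiliary estimate is therefore the following. For the filtered object $E_{n-1}$ we want
\[
m^t(E_{n-1}^{>b})\le C\sum_{j\le n-1}\bigl(m^t(F_j^{>a_j-\epsilon})+m^t(F_{j+1}^{\le a_j})\bigr)\qquad\text{for } b=a_{n-1}\ge a_j.
\]
We prove this by a second induction along the filtration. The functor $\tau^{>b}$ is not exact, so we cannot simply truncate the triangles $F_j\to E_j\to E_{j-1}$. We would instead use the octahedral constructions from the proof of \Cref{L:mass_additivity}. For a triangle $A\to B\to C$, the object $B^{>b}$ sits in a triangle whose other vertices are built from $A^{>b}$, $C^{>b}$ and the "defect" $N$ of \eqref{E:exact_triangle_for_mass_bound}, whose mass is bounded by $m^t(A^{\le b})+m^t(C^{>b})$. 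Repeated use of the triangle inequality then bounds $m^t(B^{>b})$ linearly by masses of truncations of $A$ and $C$.

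The main obstacle is the mismatch of cutoffs: the hypothesis controls $F_j$ only above $a_j-\epsilon$, while we need control above $b=a_{n-1}\ge a_j$. Since $a_j-\epsilon<a_j\le b$, we have $F_j^{>b}$ as a truncation of $F_j^{>a_j-\epsilon}$. Using the t-structure $(\cP(>b),\cP(\le b))$ on $F_j^{>a_j-\epsilon}$ and the triangle inequality with shifts, we should get $m^t(F_j^{>b})\le m^t(F_j^{>a_j-\epsilon})$, because HN factors are simply discarded. The $\epsilon$-gap is what makes the intermediate pieces $\cP(a_j-\epsilon,a_j]$ harmless. The delicate point is the wrong-way pieces created when $E_{j-1}$ is truncated at $a_j$ but glued to $F_j$ truncated at $a_j-\epsilon$. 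Their phases lie in a window of width $\epsilon<1$, so \Cref{L:conevector} gives a mass comparison with a constant depending on $\epsilon$; this is where $C_{n,\epsilon,t}$ picks up its $\epsilon$-dependence. Each step of the filtration costs a factor of the form $(1+e^{|t|})$ times an $\epsilon$-dependent constant, so $C_{n,\epsilon,t}$ grows roughly geometrically in $n$. We do not optimise it.
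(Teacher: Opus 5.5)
Your outer reduction is sound, and a bit more direct than the paper's route through the auxiliary objects $F_i'$. Applying \Cref{C:reverse_triangle_inequality} to $F_n\to E_n\to E_{n-1}$ at $a=a_{n-1}$ and inducting reduces everything to bounding $m^t(E_{n-1}^{>a_{n-1}})$ by the terms $m^t(F_j^{>a_j-\epsilon})$, $j\le n-1$. That is exactly the bound $m^t(E_i^{>a_i})\le C_i$ the paper also proves. The gap is that this auxiliary estimate, which carries the real content of the theorem, does not follow from the tools you name.

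The defect $N$ of \Cref{L:mass_additivity} has mass bounded by $m^t(A^{\le b})+m^t(C^{>b})$. With $A=F_j$ and $b\ge a_j$, the term $m^t(F_j^{\le b})$ is essentially the whole mass of $F_j$, not an allowed error term. What actually has to be controlled is $N^{>b}=\Cone(A^{>b}\to B^{>b})$. Its part in the heart $\cP(b,b+1]$ is $\ker\delta$, for the connecting morphism $\delta\colon C^{(b,b+1]}\to A^{(b-1,b]}[1]$. A subobject of an object of small mass can have large mass, so no chain of triangle inequalities bounds it.

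Here is a concrete instance in $\DCoh(\rep A_2)$. Take $\epsilon<\tfrac12$, $Z(S_1)=e^{i\pi(1-\epsilon)}$, and $Z(S_2)=Re^{i\pi\eta}$ with $\eta>0$ tiny. Let $0\to S_2\to P\to S_1\to 0$ be non-split and choose $R$ so that $Z(P)$ is purely imaginary. Then $P$ is stable of mass $\approx\sin(\pi\epsilon)$. Set $a_1=-\tfrac12$, $a_2=0$, $F_1=P$, $F_2=S_1[-1]$, so that $E_2=S_2$. At $t=0$ the allowed terms $m(F_1^{>a_1-\epsilon})$, $m(F_2^{\le a_1})$, $m(F_2^{>a_2-\epsilon})$ equal $m(P)$, $0$, $0$. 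Yet $m(E_2^{>a_2})=m(S_2)\approx\cot(\pi\epsilon)\,m(P)$.

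So the constant in your auxiliary estimate must blow up as $\epsilon\to 0$. \Cref{L:conevector} on a window of width $\pi\epsilon$ only yields the factor $1/\cos(\pi\epsilon/2)$, which stays bounded, so it cannot be the source of the $\epsilon$-dependence. Nor do the offending pieces lie in a window of width $\epsilon$: here $\ker\delta=S_2$ is near phase $b$ and $\im\delta=S_1$ is at phase $b+1-\epsilon$, at opposite ends of the heart.

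The missing ingredient is \Cref{P:truncated_triangle_inequality}:
$m^t(B^{>b})\le m^t(A^{>b-\epsilon})+m^t(C^{>b})+r_\epsilon(-t)\,m^t(C^{(b,b+1]})$, with $r_\epsilon(t)=\tfrac{e^{\epsilon t}}{\sin(\pi\epsilon)}\max(e^t,1)$. Its proof has two steps.
\begin{enumerate}
\item The long exact sequence in $\cP(b,b+1]$ together with \Cref{L:gt_inequality} gives $m^t(\ker\delta)\le m^t(C^{(b,b+1]})+g_t(-Z(\im\delta))$; this is \Cref{L:pre_triangle_inequality}.
\item After rotating so that $b=0$, the HN-polygon argument of \Cref{L:bound_gt_image} places $Z(\im\delta)$ in $\HN(A^{(b-1,b]}[1])\cap\bigl(Z(C^{(b,b+1]})-\HN(C^{(b,b+1]})\bigr)$. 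Once the part of $A$ in $(b-\epsilon,b]$ is split off, the remaining phases of $A^{(b-1,b]}[1]$ are at most $b+1-\epsilon$. This bounds $g_t(Z(\im\delta))$ by $m^t(A^{(b-\epsilon,b]})$ plus a constant of order $1/\sin(\pi\epsilon)$ times $m^t(C^{(b,b+1]})$.
\end{enumerate}
Applying the proposition to $F_j\to E_j\to E_{j-1}$ at $a=a_j$ gives the recursion $m^t(E_j^{>a_j})\le m^t(F_j^{>a_j-\epsilon})+(1+r_\epsilon(-t))\,m^t(E_{j-1}^{>a_{j-1}})$. With this recursion your outer induction goes through.
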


As a first step towards the proof, we will first establish estimates for the mass of the \emph{truncations} of objects in an exact triangle.

\begin{lem}\label{L:bound_gt_image}
Let $\delta : G \to E$ be a morphism in the heart $\cP_\sigma(0,1]$. Then for any $t \in \bf{R}$ and any $a,b \in (0,1)$, one has
\[
    g_t(Z(\im(\delta))) \leq \max \left\{ \begin{array}{c} m^t(G^{\leq a}) + \frac{e^{ta}}{\sin(\pi a)} \min\left( \Im(Z(G^{>a})),\Im(Z(E)) \right), \\ m^t(E^{>b}) + \frac{e^{tb}}{\sin(\pi b)} \min\left(\Im(Z(E^{\leq b})),\Im(Z(G))\right) \end{array} \right\}.
\]
\end{lem}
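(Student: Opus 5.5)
The plan is to bound $g_t(Z(\im\delta))$ by each of the two quantities in the maximum, splitting into cases by the phase of $Z(\im\delta)$. I will prove the first bound in the regime where the first expression is the natural one, the second bound in the complementary regime, and the two cases together give the stated $\max$. The argument is symmetric under the duality exchanging quotients of $G$ with subobjects of $E$, and $\cP(>a)$ with $\cP(\le b)$. So I describe only the first bound in detail. Write $I := \im(\delta)$; it is a quotient of $G$ and a subobject of $E$ in $\cP_\sigma(0,1]$.

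For the first bound, use the torsion pair $(\cP(a,1],\cP(0,a])$ in the heart. Let $I' := \im(G^{>a} \to I)$, so that $I/I'$ is a quotient of $G^{\le a}$. Then $Z(I) = Z(I') + Z(I/I')$. Here $Z(I/I')$ has phase in $(0,a]$, and $Z(I')$ has phase in $(a,1]$ with
\[
\Im Z(I') \le \min\big(\Im Z(G^{>a}),\Im Z(E)\big).
\]
This inequality holds because $\Im Z$ is additive and nonnegative on the heart, and $I'$ is both a quotient of $G^{>a}$ and a subobject of $E$.

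Two geometric inputs are then needed.
\begin{enumerate}
\item For any quotient $Q$ of an object $A$ of the heart, $Z(Q)$ lies in the zonotope $\sum_i [0,1]\cdot Z(\HN_i(A))$. Since $g_t$ is positively homogeneous and subadditive on a sector of width $<\pi$ (the triangle inequality for $g_t$ from the proof of \Cref{L:gt_inequality}), it is convex there. Hence
\[
g_t(Z(Q)) \le \sum_i \lambda_i\, g_t(Z(\HN_i A)) \le m^t(A).
\]
I expect to prove the zonotope statement either by induction on the HN length using the Hom-vanishing of the slicing, or by quoting \Cref{L:gt_inequality} directly. Applied with $Q = I/I'$ and $A = G^{\le a}$, this gives $g_t(Z(I/I')) \le m^t(G^{\le a})$.
\item A vector $v$ of phase in $[a,1]$ decomposes as $v = \alpha e^{i\pi a} + \beta\cdot(-1)$ with $\alpha = \Im(v)/\sin(\pi a)$ and $\beta \ge 0$. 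Moreover $g_t(\alpha e^{i\pi a}) = \alpha e^{ta}$, which is exactly the second term of the first expression.
\end{enumerate}
Combining these with subadditivity, $g_t(Z(I)) \le g_t(Z(I/I') + \alpha e^{i\pi a} - \beta) $ should then be bounded by $m^t(G^{\le a}) + \frac{e^{ta}}{\sin \pi a}\min(\cdots)$.

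The main obstacle is the leftover negative-real component $-\beta$. A naive term-by-term bound of $g_t(Z(I'))$ by $e^{ta}\Im Z(I')/\sin(\pi a)$ fails when the phase of $Z(I')$ is near $1$. To handle this, I would maximize the convex function $g_t$ over the region
\[
Z(I/I') + \{\, v : \phi(v)\in[a,1],\ \Im v \le h \,\}
\]
intersected with the constraint that $Z(I)$ lies in the upper half plane and in the zonotope of $E$, since $I$ is a subobject of $E$. The hope is that this forces the relevant extreme point to be $Z(I/I') + h e^{i\pi a}/\sin(\pi a)$. If that fails, the case split is where the second expression enters. When the phase of $Z(I)$ is large, bound $I$ instead via its subobject $I\cap E^{>b}$ and quotient $I/(I\cap E^{>b}) \subset E^{\le b}$, where the roles of the two directions are swapped and the problematic direction becomes harmless. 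Choosing the split according to whether $\phi(Z(I))$ lies above or below a threshold determined by $a$ and $b$ should let each regime use the bound whose geometry is favorable. Verifying this carefully is the step I expect to require the most work.
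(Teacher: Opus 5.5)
Your proposal has a genuine gap, and it sits exactly at the crux of the lemma.

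The decomposition $Z(I)=Z(I/I')+Z(I')$, together with $g_t(Z(I/I'))\le m^t(G^{\le a})$ and $\Im Z(I')\le\min(\Im Z(G^{>a}),\Im Z(E))$, only places $Z(I)$ to the left of the curve $\gamma_G$. Here $\gamma_G$ is the reflected HN curve of $G^{\le a}$ followed by the ray $Z(G^{\le a})+\bR_{\ge 0}e^{i\pi a}$. That region is unbounded in the $\bR_{<0}$ direction, and bounding $\Im Z(I')$ says nothing about $\Re Z(I')$.

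No argument using only this data can give the first expression. Take $E=G$ semistable of phase $\psi\in(a,1)$ and $\delta=\mathrm{id}$. The first expression is $\frac{e^{ta}}{\sin(\pi a)}\Im Z(G)$, whereas $g_t(Z(\im\delta))=\frac{e^{t\psi}}{\sin(\pi\psi)}\Im Z(G)$, which exceeds it once $\psi$ is close to $1$. So $E$-side information must enter, and how it enters is exactly what you leave as a ``hope''.

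The case split you suggest also cannot use a threshold $\theta'$ depending only on $a$ and $b$. Keep $E=G$ semistable of phase $\psi$ and $\delta=\mathrm{id}$. The function $\psi\mapsto e^{t\psi}/\sin(\pi\psi)$ has its strict minimum on $(0,1)$ at the phase $\theta_0(t)$ defined by $\cot(\pi\theta_0(t))=t/\pi$.
\begin{itemize}
\item If $\theta'<b$, choose $t$ with $\theta_0(t)=b$ and $\psi\in[\theta',b)$. This lies in your high-phase regime, but the second expression is strictly smaller than $g_t(Z(\im\delta))$.
\item If $\theta'>a$, choose $t$ with $\theta_0(t)=a$ and $\psi\in(a,\theta')$. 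This lies in the low-phase regime, but the first expression is too small.
\end{itemize}
When $a<b$, every $\theta'$ falls into one of these two cases.

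A smaller error: $I/I'$ need not lie in $\cP(0,a]$, since that class is closed under subobjects but not under quotients. Your zonotope bound $g_t(Z(I/I'))\le m^t(G^{\le a})$ does not need this.

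The missing idea is to constrain $Z(\im\delta)$ from both sides simultaneously, and to bound the height of $Z(\im\delta)$ itself rather than that of $Z(I')$. Since $\im\delta\subset E$ and $\im\delta$ is a quotient of $G$, we have $Z(\im\delta)\in\HN(E)\cap(Z(G)-\HN(G))$. Set $h:=\min(\Im Z(E),\Im Z(G))$. Then:
\begin{itemize}
\item $\Im Z(\im\delta)\le h$;
\item $Z(\im\delta)$ lies to the right of $\gamma_E$, the HN curve of $E^{>b}$ followed by $Z(E^{>b})+\bR_{\ge 0}e^{i\pi b}$;
\item $Z(\im\delta)$ lies to the left of $\gamma_G$.
\end{itemize}
Your two decompositions, via $I'$ and via $I\cap E^{>b}$, essentially give these two one-sided constraints, but they have to be intersected.

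On the resulting region, argue as follows. First, $g_t$ increases along rays from $0$, so its supremum is attained on $\gamma_E$, on $\gamma_G$, or on the top edge $\Im z=h$. Second, on that line the only critical point of $g_t$ is a minimum, at $\Re z=th/\pi$, so on the top edge the maximum occurs at an endpoint $u\in\gamma_E$ or $v\in\gamma_G$. Third, subadditivity along these arcs (your convexity remark, or \Cref{L:gt_inequality}) bounds $g_t$:
\begin{itemize}
\item on $\gamma_E$ at height $\le h$, by $m^t(E^{>b})+\frac{e^{tb}}{\sin(\pi b)}\max(0,h-\Im Z(E^{>b}))$;
\item on $\gamma_G$ at height $\le h$, by $m^t(G^{\le a})+\frac{e^{ta}}{\sin(\pi a)}\max(0,h-\Im Z(G^{\le a}))$.
\end{itemize}
The inequalities $h-\Im Z(E^{>b})\le\min(\Im Z(E^{\le b}),\Im Z(G))$ and $h-\Im Z(G^{\le a})\le\min(\Im Z(G^{>a}),\Im Z(E))$ then yield the two stated expressions.

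A phase split does work if you take the threshold to be $\theta_0(t)$. Phases $\ge\theta_0(t)$ are then controlled by $\gamma_E$ and $u$, and phases $<\theta_0(t)$ by $\gamma_G$ and $v$. This holds only once the height bound on $Z(\im\delta)$ is in place.
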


Note that when $a$ is close to $1$, the top expression simplifies to $m^t(G^{\leq a})$, which is $\leq m^t(G)$, and when $b$ is close to $0$, the bottom expression simplifies to $m^t(E)$.\endnote{As $a\to 1$ there is at most one HN factor of $G$ with phase in $(a,1]$, i.e. $G_1\in \cP(1)$, if it exists. However, in that case for $a$ sufficiently large, $G^{>a} = G_1$ and $\Im Z(G_1) = \Im Z(G^{>a}) = 0$.}

\begin{proof}
There are two constraints on the value of $Z(\im(\delta))$ that bound it in a compact polygon. For any $X \in \cP(0,1]$, let $\HN(X) \subset \bH \cup \bR_{<0}$ denote the HN polygon of $X$ \cite{Bayer_short}*{Def. 3.1}; i.e. the convex hull of the central charges $Z(A)$ of all subobjects $A \subset X$. By definition, $\im(\delta) \subset E$ implies $Z(\im(\delta)) \in \HN(E)$. On the other hand $\ker(\delta) \subset G$ implies $Z(\ker(\delta)) = Z(G) - Z(\im(\delta)) \in \HN(G)$, or equivalently $Z(\im(\delta)) \in Z(G) - \HN(G)$. Therefore
\[
Z(\im(\delta)) \in P:= \HN(E) \cap (Z(G)-\HN(G)),
\]
which is compact: It is bounded on the left by the HN curve of $E$. It is bounded on the right by the curve obtained as the transform $Z(G)-(-)$ applied to the HN curve of $G$. Finally, its points have imaginary part in $[0,h]$, where $h$ is defined to be the minimum of $\Im(Z(G)), \Im(Z(E))$, and the imaginary part of the intersection point in $\bH$ of the first two curves, if one exists.

Let $\gamma_E$ denote the curve obtained by concatenating the HN curve of $E^{>b}$ with the curve $Z(E^{>b}) + \bR_{\geq 0} e^{i \pi b}$, and let $\gamma_G$ be the concatenation of $Z(G^{\leq a})-$(HN curve of $G^{\leq a}$) with the curve $Z(G^{\leq a}) + \bR_{\geq 0} e^{i \pi a}$. Let $P' \supseteq P$ denote the larger polytope that is bounded on the left by $\gamma_E$ and above by the line $\Im(z)=h$, and on the right by $\gamma_G$. Let $u,v \in \bH \cup \bR_{\leq 0}$ denote the intersection of $\gamma_E$ and $\gamma_G$ with the line $\Im(z)=h$ respectively. Note that $\Re(u) \leq \Re(v)$. We write $P' = A \cup B \cup C$ as a union of polygons, where $B$ is the triangle with vertices $0,u,v$, $A$ is the region to the left of the line segment from $0$ to $u$, and $C$ is the region to the right of the line segment from $0$ to $v$. See \Cref{fig:truncated_triangle}. 

\begin{figure}
\begin{tikzpicture}[scale=1.50]
\coordinate (v1) at (0,-0.5);
\coordinate (v2) at (-1.5,0);
\coordinate (v3) at (-2,0.5);
\coordinate (v4) at (-1.5,2);
\coordinate (v5) at (-0.5,2.5);
\coordinate (v6) at (-3,2.5) {};
\coordinate (v9) at (-0.5,1.5);
\coordinate (v8) at (1,1);
\coordinate (v7) at (1,0);
\coordinate (v10) at (-4.5,1.5);
\coordinate (v11) at (2.5,1.5);
\coordinate (v12) at (-1.666666,1.5);
\coordinate (v13) at (-2.5,1.5) {};
\coordinate (v14) at (1.5,1.5);
\coordinate (v15) at (2,3);

\fill[red, opacity=.1] (v1)--(v13)--(v14)--cycle;
\fill[blue,opacity=.1] (v1) -- (v2) -- (v3) -- (v13)--cycle;
\fill[green,opacity=.1] (v1) -- (v14) --(v7)--cycle;

\draw[thick,blue]  (v10) edge (v11);
\draw[thick,purple] (v1) -- (v7) -- (v8) -- (v9);
\draw[thick] (v1) -- (v2) -- (v3) -- (v4) -- (v5);
\draw[thick,dashed,green] (v1) -- (v2) -- (v3) -- (v6);
\draw[thick,dashed,gray] (v1) -- (v7) -- (1.5,1.5) -- (v15);

\node[anchor=south] at (v13) {$u$};
\node[anchor=south] at (v14) {$v$};
\node[anchor=north] at (v1) {$0$};
\node at (-.25,0.5) {$B$};
\node at (0.8,0.2) {$C$};
\node at (-1.5,0.4) {$A$};
\node[anchor=south] at (v9) {$Z(G)$};
\node[anchor=west] at (v5) {$Z(E)$};
\node[anchor=east] at (v3) {$Z(E^{>b})$};
\node[anchor=west] at (v7) {$Z(G^{\leq a})$};
\fill[black] (v13) circle (1pt) (v14) circle (1pt) (v5) circle (1pt) (v9) circle (1pt) (v1) circle (1pt) (v3) circle (1pt) (v7) circle (1pt);
\end{tikzpicture}
\caption{This illustrates the polygons appearing in the proof of \Cref{L:bound_gt_image}. The blue horizontal line is the curve $\Im(z)=h$, the green dashed line is the curve $\gamma_E$, the black line is the HN curve of $E$, the purple line is $Z(G)-$(HN curve of $G$), and the gray dashed line is $\gamma_G$. $Z(\im(\delta))$ is constrained to lie in the polygon $P$ bounded by the blue, black, and purple curves. For the purposes of our bound, though, we maximize $g_t(z)$ on the larger polygon $P' = A \cup B \cup C$.}
\label{fig:truncated_triangle}
\end{figure}

We wish to upper bound the function $g_t(z)$ on $P'$, and it suffices to do this separately on each polygon $A,B,$ and $C$. Because $g_t(z)$ is monotone increasing in $|z|$, the maximum on $B$ must occur on the segment between $u$ and $v$, and in fact the maximum must occur at either $u$ or $v$.\endnote{One can compute using polar coordinates that $\frac{\partial}{\partial x} g_t(x+ih) = \frac{e^{t \phi} }{|x+ih|} (x- \frac{t}{\pi}h)$, so the only critical point of $g_t(z)$ along the line $\Im(z)=h$ is the point $z = th/\pi + i h$, and this is a local minimum.} Therefore, the maximum on $P'$ agrees with the maximum of $g_t(z)$ on $A \cup C$. Monotonicity in $|z|$ also implies that the maximum on $A$ and $C$ must occur on the curve $\gamma_E$ and $\gamma_G$ respectively.

For any $w \in \gamma_E \cap A$, one can apply \Cref{L:gt_inequality} with $w_\bullet = (w)$ and $z_\bullet$ consisting segments of $\gamma_E$ up to the point $w$ to conclude that $g_t(w) \leq g_t(z_\bullet)$. If we let $(z'_\bullet)$ consist of segments of $\gamma_E$ up to $u$, then we have shown that
\[
\sup_{z \in A} g_t(z) \leq g_t(z'_\bullet) = m^t(E^{>b}) + e^{tb} \max\left(0,\frac{h-\Im(Z(E^{>b}))}{\sin(\pi b)} \right),
\]
where the last term on the right is $g_t(u-Z(E^{>b}))$ if $h>\Im(Z(E^{>b}))$ and zero otherwise. We simplify this bound by noting both that $h-\Im(Z(E^{>b})) \leq h \leq \Im(Z(G))$ and $h-\Im(Z(E^{>b})) \leq \Im(Z(E)-Z(E^{>b})) = \Im(Z(E^{\leq b}))$.

One can use the same analysis on the polygon $C$ to show that\endnote{In this case, one must find an upper bound on $g_t(w)$ for $w \in \gamma_G \cap P'$. In order to do this, one applies \Cref{L:gt_inequality} where $w_\bullet = (w)$, and $z_\bullet$ are the segments of $\gamma_G$ connecting $0$ to $w$, reordered in such a way that $\phi(z_1)\geq \phi(z_2) \geq \cdots$. Note that because all segments of $\gamma_G$ lie in $\bH$ and $m=1$, the hypotheses of \Cref{L:gt_inequality} are automatically satisfied.}
\[
\sup_{z \in C} g_t(z) \leq m^t(G^{\leq a}) + \frac{e^{ta}}{\sin(\pi a)} \min(\Im(Z(G^{>a})),\Im(Z(E))).
\]
Combining these bounds for $g_t(z)$ on $A$ and $C$ gives the statement of the lemma.
\end{proof}

\begin{lem} \label{L:pre_triangle_inequality}
Let $E \to F \to G$ be an exact triangle in $\cD$. Consider the induced morphism $\delta : G^{(0,1]} \to E^{(-1,0]}[1]$ in the heart $\cP_\sigma(0,1]$ and let $\im(\delta)$ be its image. Then one has
\begin{align*}
    m_t(F^{>0}) &\leq m^t(G^{>0}) + m^t(E^{>0}) + e^{-t} g_t(Z(\im \delta)), \text{ and} \\
    m_t(F^{\leq 0}) &\leq m^t(G^{\leq 0}) + m^t(E^{\leq 0}) +  g_t(Z(\im \delta)).
\end{align*}
\end{lem}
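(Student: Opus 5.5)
The plan is to compute $F^{>0}$ and $F^{\leq 0}$ from truncations of $E$ and $G$, using the octahedral axiom as in the proof of \Cref{L:mass_additivity}. The only new ingredients are $\ker(\delta)$ and $\operatorname{coker}(\delta)$ in the heart $\cP_\sigma(0,1]$. Write $G' := G^{(0,1]}$ and $E' := E^{(-1,0]}[1]$, so that $\delta : G' \to E'$ is the connecting morphism of the long exact homology sequence for $E \to F \to G$ with respect to this heart.

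\emph{Step 1 (describe the truncations of $F$).} I would show that there are exact triangles
\[
E^{>0} \to F^{>0} \to K, \qquad L \to F^{\leq 0} \to G^{\leq 0},
\]
together with a triangle $\ker(\delta) \to K \to G^{>1}$ and an identification $L \cong E^{\leq -1} \oplus$-type extension of $\operatorname{coker}(\delta)[-1]$ by $E^{\leq -1}$. Concretely, $K$ should be
\[
\bigl(\fib(G^{>0} \to E^{\leq 0}[1])\bigr)^{>0},
\]
and the map $G^{>0} \to E^{\leq 0}[1]$ induces $\delta$ on the heart, because $G^{>0}\in\cP(>0)$ and $E^{\leq 0}[1]\in \cP(\leq 1)$. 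This is routine but fiddly homological algebra. Equivalently, $F^{>0}$ has homology $H_i(E)$ and $H_i(G)$ in degrees $\geq 1$, and in degree $0$ it is an extension of $\ker\delta$ by $H_0(E)$. Dually, $F^{\leq 0}$ is built from $\operatorname{coker}(\delta)[-1]$ and the lower homology.

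\emph{Step 2 (reduce to the heart).} By \Cref{L:triangle_inequality} applied to these triangles, and using additivity of $m^t$ over homology pieces in different phase ranges, it suffices to prove two estimates in the heart:
\[
m^t(\ker\delta) \leq m^t(G') + e^{-t} g_t(Z(\im\delta)), \qquad m^t(\operatorname{coker}(\delta)[-1]) \leq m^t(E'[-1]) + g_t(Z(\im\delta)).
\]
Adding these to $m^t(E^{>0}) + m^t(G^{>1})$, respectively to the remaining pieces of $G^{\leq 0}$ and $E^{\leq 0}$, gives the claimed inequalities. Here I use $m^t(G^{>0}) = m^t(G^{>1}) + m^t(G')$ and $m^t(E^{\leq 0}) = m^t(E^{\leq -1}) + m^t(E'[-1])$.

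\emph{Step 3 (apply \Cref{L:gt_inequality}).} For the kernel estimate, take $w_\bullet$ to be the central charges of the HN factors of $\ker\delta$. Take $z_\bullet$ to be the HN factors of $G'$ followed by the single vector $-Z(\im\delta)$, whose phase lies in $(-1,0]$, so the phases are still decreasing. Then $\Sigma z = Z(G')-Z(\im\delta) = \Sigma w$. Condition (1) holds because $\ker\delta\subset G'$ gives $\HN(\ker\delta)\subset \HN(G')\subset P_+$. Condition (2) is needed only at the endpoint, where it is immediate. Using $g_t(-z) = e^{-t}g_t(z)$ for $z\in\bH\cup\bR_{<0}$, we get $g_t(z_\bullet) = m^t(G') + e^{-t}g_t(Z(\im\delta))$.

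For the cokernel estimate, take $w_\bullet$ to be the HN factors of $\operatorname{coker}(\delta)[-1]$, all of phase in $(-1,0]$, so $s=0$. Take $z_\bullet = (Z(\im\delta), -Z(\HN_1(E')), \ldots)$, which again has decreasing phases. Its total is $Z(\im\delta)-Z(E') = Z(\operatorname{coker}[-1])$, and $g_t(z_\bullet) = g_t(Z(\im\delta)) + m^t(E'[-1])$.

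The main obstacle will be verifying condition (2) in the cokernel case, i.e.\ that every partial sum $\Sigma w_i$ lies in
\[
Z(\im\delta) + \bR_{\leq 0} + \textstyle\sum [0,1]\cdot(-Z(\HN_j(E'))).
\]
I would argue through the HN polygon, using that quotients of $\operatorname{coker}\delta$ are quotients of $E'$. Concretely, $Z(\operatorname{coker}\delta) - \Sigma$ (the first $i$ factors) is the central charge of a quotient of $E'$ modulo $\im\delta$. One then needs a convexity argument like the one in \Cref{L:bound_gt_image} to place it in $Z(E') - \HN(E')$, shifted by $Z(\im\delta)$. The homological bookkeeping in Step 1 is the other place where care is needed.
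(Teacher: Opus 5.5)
Your proposal is correct and is essentially the paper's proof. The paper likewise gets $m^t(\Cone(E^{>0}\to F^{>0})) = m^t(G^{>1}) + m^t(\ker\delta)$ and $m^t(\fib(F^{\le 0}\to G^{\le 0})) = m^t(E^{\le -1}) + e^{-t}m^t(\coker\delta)$ from the long exact homology sequence, applies \Cref{L:gt_inequality} to the same two pairs of sequences, and finishes with the triangle inequality; your ordering with $Z(\im\delta)$ first in the cokernel case is the correct one for decreasing phases.

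Two small corrections, neither affecting the argument. First, the truncation triangle is $G^{>1}\to K\to\ker\delta$, not the reverse, though only additivity of $m^t$ is used. Second, the cokernel-case condition (2) is no harder than condition (1) in the kernel case: the partial sums of $w_\bullet$ are $Z(\im\delta)-Z(A_i)$ for subobjects $\im\delta\subseteq A_i\subseteq E'$, so you only need $Z(A_i)\in\bR_{\ge 0}+\sum_j[0,1]\,Z(\HN_j(E'))$, which follows (as does your claim $\HN(G')\subset P_+$) by intersecting $A_i$ with the HN filtration of $E'$.
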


\begin{proof}
Consider the $t$-structure associated to $\cP_\sigma(0,1]$, let $G_0 := G^{(0,1]}$ and $E_{-1}:= E^{(-1,0]}[1]$. The long exact homology sequence implies\endnote{Let $C=\Cone(E^{>0} \to F^{>0})$. Then the map of exact triangles induces a map of long exact sequences
\[
    \xymatrix{ \cdots \ar[r] & C_1 \ar[r] \ar[d]^{\cong} & E_0 \ar[r] \ar[d]^{=} & F_0 \ar[r] \ar[d]^{=} & C_0 \ar[r] \ar[d] & 0 & \cdots \\
    \cdots \ar[r] & G_1 \ar[r] & E_0 \ar[r] & F_0 \ar[r] & G_0 \ar[r]^\delta & E_{-1} \ar[r] & \cdots }.
\]
This induces an isomorphism $C_0 \xrightarrow{\sim} \ker(\delta)$, and all negative homology objects of $C$ vanish.}
\[
m^t(\Cone(E^{>0} \to F^{>0})) = m^t(G^{>1})+m^t(\ker(\delta)).
\]
Let $w_i = Z(\gr^{\HN}_i(\ker(\delta)))$, $z_i = Z(\gr^{\HN}_i(G_0))$ for $i=1,\ldots,N$, and $z_{N+1}=-Z(G_0/\ker(\delta)) = -Z(\im(\delta))$, where $N$ is the length of the HN filtration of $G_0$. Applying \Cref{L:gt_inequality} to $z_\bullet$ and $w_\bullet$ implies that $m^t(\ker(\delta)) \leq m^t(G_0) + g_t(-Z(\im(\delta)))$. Combining this with the triangle inequality $m^t(F^{>0}) \leq m^t(E^{>0}) + m^t(\Cone(E^{>0} \to F^{>0}))$ gives the first inequality in the lemma.

For the second inequality, the long exact homology sequence implies that\endnote{If we let $K = \fib(F^{\leq 0} \to G^{\leq 0})$, then we get a map of long exact homology sequences
\[
\xymatrix{ \cdots \ar[r] & G_0 \ar[r]^\delta \ar[d] & E_{-1} \ar[r] \ar[d] & F_{-1} \ar[r] \ar[d]^{=} & G_{-1} \ar[r] \ar[d]^{=} & E_{-2} \ar[r] \ar[d]^{\cong} & \cdots \\
\cdots \ar[r] & 0 \ar[r] & K_{-1} \ar[r] & F_{-1} \ar[r] & G_{-1} \ar[r] & K_{-2} \ar[r] & \cdots }.
\]
This shows that $K_{-1} \cong \coker(\delta)$.}
\[
m^t(\fib(F^{\leq 0} \to G^{\leq 0})) = m^t(E^{\leq -1}) + e^{-t} m^t(\coker(\delta)).
\]
Let $w_i = -Z(\gr_i^{\HN}(\coker(\delta)))$, $z_i = -Z(\gr_i^{\HN}(E_0))$ for $i=1,\ldots,N$, and $z_{N+1} = Z(\im(\delta))$, where $N$ is the length of the HN filtration of $E_{-1}$. Applying \Cref{L:gt_inequality} to $(z_\bullet)$ and $(w_\bullet)$ shows that $e^{-t} m^t(\coker(\delta)) \leq e^{-t} m^t(E_{-1}) + g_t(Z(\im(\delta)))$. This combined with the triangle inequality $m^t(F^{\leq 0}) \leq m^t(\fib(F^{\leq 0} \to G^{\leq 0})) + m^t(G^{\leq 0})$ gives the second inequality of the lemma.
\end{proof}

Combining the previous two lemmas yields:

\begin{prop}[Triangle inequality with truncation] \label{P:truncated_triangle_inequality}
Let $E \to F \to G$ be an exact triangle in $\cD$. Then for any $t,a \in \bR$ and $0<\epsilon<1$,
\begin{align*}
m^t(F^{>a}) &\leq m^t(E^{>a-\epsilon}) + m^t(G^{>a}) + r_\epsilon(-t) m^t(G^{(a,a+1]}), \text{ and} \\
m^t(F^{\leq a}) &\leq m^t(E^{\leq a}) + m^t(G^{\leq a+\epsilon}) + r_\epsilon(t) m^t(E^{(a-1,a]}).
\end{align*}
where $r_\epsilon(t) := \frac{e^{\epsilon t}}{\sin(\pi \epsilon)} \max(e^{t},1)$.
\end{prop}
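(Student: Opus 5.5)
We prove both inequalities by reducing to $a=0$, applying \Cref{L:pre_triangle_inequality}, and then bounding the error term $g_t(Z(\im\delta))$ with \Cref{L:bound_gt_image} for a suitable choice of the parameters $a,b$ there.

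\emph{Reduction to $a=0$.} Act on $\sigma$ by the element $w=-i\pi a\in\bC$. This shifts every phase by $-a$, leaves all $|Z|$ unchanged, and turns $\cP(>a)$ into $\cP(>0)$, and similarly for the other truncations. Every deformed mass is then multiplied by the same factor $e^{-ta}$, so both inequalities are homogeneous and invariant. Hence we may assume $a=0$. Write $G_0:=G^{(0,1]}$ and $E_{-1}:=E^{(-1,0]}[1]$, and let $\delta:G_0\to E_{-1}$ be the morphism of \Cref{L:pre_triangle_inequality}.

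Two elementary facts will be used repeatedly.
\begin{itemize}
\item Deformed mass is additive over complementary phase ranges of the HN filtration. For example, $m^t(E^{>0})+m^t(E^{(-\epsilon,0]})=m^t(E^{>-\epsilon})$.
\item For $X\in\cP(0,1]$ we have
\[
\Im Z(X)\le m(X)\le \max(1,e^{-t})\,m^t(X),
\]
since each HN factor of phase $\theta\in(0,1]$ has $|Z|=e^{-t\theta}\cdot(\text{its } m^t)$.
\end{itemize}

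\emph{First inequality.} By \Cref{L:pre_triangle_inequality},
\[
m^t(F^{>0})\le m^t(G^{>0})+m^t(E^{>0})+e^{-t}g_t(Z(\im\delta)).
\]
We bound $g_t(Z(\im\delta))$ with \Cref{L:bound_gt_image}, taking the lemma's parameter $b=1-\epsilon$ and letting its parameter $a\to 1$. The bound holds for every $a$, so we may pass to the limit. As noted after that lemma, the top expression then tends to at most $m^t(G_0)$.

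For the bottom expression, note that $E_{-1}^{>1-\epsilon}=E^{(-\epsilon,0]}[1]$, whose $m^t$ equals $e^t m^t(E^{(-\epsilon,0]})$. Using $\Im Z(G_0)\le\max(1,e^{-t})m^t(G_0)$, we get
\[
e^{-t}\cdot(\text{bottom})\le m^t(E^{(-\epsilon,0]})+\frac{e^{-\epsilon t}\max(1,e^{-t})}{\sin(\pi\epsilon)}\,m^t(G_0)=m^t(E^{(-\epsilon,0]})+r_\epsilon(-t)\,m^t(G_0).
\]
We also need $e^{-t}\cdot(\text{top})\le r_\epsilon(-t)m^t(G_0)$, which reduces to $e^{-t}\le r_\epsilon(-t)$. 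This holds in both cases $t\ge0$ and $t<0$, because $\epsilon<1$ and $\sin(\pi\epsilon)\le1$. The maximum in \Cref{L:bound_gt_image} is therefore bounded by the right-hand side above. Adding $m^t(E^{>0})+m^t(E^{(-\epsilon,0]})=m^t(E^{>-\epsilon})$ gives the first inequality.

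\emph{Second inequality.} This is symmetric. Start from the second bound in \Cref{L:pre_triangle_inequality}, and apply \Cref{L:bound_gt_image} with its parameter $a=\epsilon$ and with $b\to0$.
\begin{itemize}
\item The top expression gives $m^t(G^{(0,\epsilon]})+\frac{e^{t\epsilon}}{\sin(\pi\epsilon)}\Im Z(E_{-1})$. We have $\Im Z(E_{-1})\le\max(1,e^{-t})e^{t}m^t(E^{(-1,0]})$, and the resulting coefficient is exactly $r_\epsilon(t)$. Moreover, $m^t(G^{\le0})+m^t(G^{(0,\epsilon]})=m^t(G^{\le\epsilon})$.
\item The bottom expression tends to $m^t(E_{-1})=e^tm^t(E^{(-1,0]})$. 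This is at most $r_\epsilon(t)m^t(E^{(-1,0]})$, since $e^t\le r_\epsilon(t)$ in both sign cases of $t$.
\end{itemize}

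\emph{Main obstacle.} The only delicate point is choosing the parameters in \Cref{L:bound_gt_image} so that one branch of the maximum is absorbed by additivity of the truncations, and the other is controlled by the constant $r_\epsilon(\pm t)$. This includes justifying the limits $a\to1$ and $b\to0$, which is legitimate because the inequality of \Cref{L:bound_gt_image} holds for every $a,b\in(0,1)$. Beyond that, the argument only requires tracking the $e^{\pm t}$ factors coming from the shift $[1]$.
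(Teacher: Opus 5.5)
Your proposal is correct and follows the paper's proof essentially step for step: you reduce to $a=0$, invoke \Cref{L:pre_triangle_inequality}, and bound $g_t(Z(\im\delta))$ via \Cref{L:bound_gt_image}, using $b=1-\epsilon$, $a\to 1$ for the first inequality and $a=\epsilon$, $b\to 0$ for the second. Your $a\to 1$ is the right choice, and the paper's ``$a$ very close to $0$'' is evidently a slip; the only cosmetic difference is that you bound $\Im Z(G_0)\le m(G_0)\le \max(1,e^{-t})\,m^t(G_0)$ factor by factor, whereas the paper bounds $|Z(G_0)|$ through the triangle inequality for $g_t$ from \Cref{L:gt_inequality}, and both yield the same constant $r_\epsilon(\pm t)$.
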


\begin{proof}
Up to shifting applying the action of $\bf{C}$ on $\Stab(\cD)$, we may assume $a=0$. By \Cref{L:pre_triangle_inequality}, it suffices to bound $g_t(Z(\im(\delta : G_0 \to E_{-1})))$. We apply \Cref{L:bound_gt_image} for $b=1-\epsilon$ and $a$ very close to $0$ to deduce that\endnote{We have used that $\min(\Im(Z(E^{\leq b})),\Im(Z(G))) \leq \Im(Z(G)) \leq |Z(G)|$.}
\begin{align*}
e^{-t}g_t(Z(\im(\delta))) &\leq \max \left(e^{-t} m^t(G_0),e^{-t} m^t(E_{-1}^{>1-\epsilon}) + \frac{e^{- \epsilon t}}{\sin(\pi \epsilon)} |Z(G_0)| \right)\\
&\leq m^t(E^{(-\epsilon,0]}) + \max \left(e^{-t} m^t(G_0),\frac{e^{- \epsilon t}}{\sin(\pi \epsilon)} |Z(G_0)| \right).
\end{align*}
We then apply \Cref{L:gt_inequality} to deduce that $|Z(G_0)| \leq m^t(G_0) e^{-t \phi(Z(G_0))} \leq m^t(G_0) \max(e^{-t},1)$. This gives the first inequality of the proposition.

For the second inequality, we apply \Cref{L:bound_gt_image} with $a=\epsilon$ and $b$ very close to $0$ to get
\begin{align*}
g_t(Z(\im(\delta))) &\leq \max\left(m^t(E_{-1}), m^t(G_0^{\leq \epsilon}) + \frac{e^{\epsilon t}}{\sin(\pi \epsilon)} |Z(E_{-1})|\right)\\
&\leq m^t(G^{(0,\epsilon]}) + \max \left(m^t(E_{-1}),\frac{e^{\epsilon t}}{\sin(\pi \epsilon)} |Z(E_{-1})| \right) \\
&\leq m^t(G^{(0,\epsilon]}) + \frac{e^{\epsilon t}}{\sin(\pi \epsilon)} \max\left(e^{-t},1\right) m^t(E_{-1}). \qedhere
\end{align*}
\end{proof}

\begin{proof}[Proof of \Cref{T:filtration_inequality}]
For each $i$, we consider the exact triangle $F_i \to E_i \to E_{i-1}$, with $E_n=E$ and $E_0=0$. If we let $F'_i := \fib(E_i \to E_{i-1}^{\leq a_{i-1}})$, then some manipulation of the octahedral axiom implies that\endnote{The octahedral axiom gives an exact triangle
\[
F_i \to F_i' \to E_{i-1}^{>a_{i-1}}.
\]
Applying the octahedral axiom to the composition $(F_i')^{>a_{i-1}} \to F_i' \to E_i$ gives exact triangles
\[
(F_i')^{>a_{i-1}} \to E_i \to E_{i-1}', \text{ and}
\]
\[
(F_i')^{\leq a_{i-1}} \to E_{i-1}' \to E_{i-1}^{\leq a_{i-1}}.
\]
The second triangle implies that $E_{i-1}' \in \cP(\leq a_{i-1})$, and thus the first triangle implies $(F_i')^{>a_{i-1}} \cong E_i^{>a_{i-1}}$ and $E_{i-1}' \cong E_i^{\leq a_{i-1}}$. Note also that $(F_i')^{\leq a_{i-1}} = \fib(E_i^{\leq a_{i-1}} \to E_{i-1}^{\leq a_{i-1}})$.}
\[
(F_i')^{>a_{i-1}} \cong E_i^{>a_{i-1}} \text{ and } (F_i')^{\leq a_{i-1}} = \fib(E_i^{\leq a_{i-1}} \to E_{i-1}^{\leq a_{i-1}}).
\]

\Cref{P:truncated_triangle_inequality} implies that
\[m^t(E_{i+1}^{>a_{i+1}}) = m^t((F_{i+1}')^{>a_{i+1}}) \leq m^t(F_{i+1}^{>a_{i+1}-\epsilon}) + (1+r_\epsilon(-t)) m^t(E_i^{>a_i}).
\]
If we let $C_0=0$ and $C_{i+1} := m^t(F_{i+1}^{>a_{i+1}-\epsilon}) +(1+r_\epsilon(-t)) C_{i}$, this shows that $m^t(E_{i}^{>a_{i}}) \leq C_{i}$ for all $i$. Next, \Cref{P:truncated_triangle_inequality} applied to the triangle $F_{i+1} \to F_{i+1}' \to E_{i}^{>a_{i}}$ implies that
\begin{align*}
m^t((F_{i+1}')^{\leq a_{i}}) &\leq m^t(E_{i}^{(a_{i},a_{i}+\epsilon]})+ (1+r_\epsilon(t)) m^t(F_{i+1}^{\leq a_{i}}) \\
&\leq m^t(E_i^{>a_i})+(1+r_\epsilon(t)) m^t(F_{i+1}^{\leq a_{i}}).   
\end{align*}
Using this we compute\endnote{The full chain of deductions is:
\begin{align*}
m^t(E_{i+1}) &= m^t(E_{i+1}^{>a_{i}}) + m^t(E_{i+1}^{\leq a_{i}}) \\
&= m^t((F_{i+1}')^{>a_{i}})+m^t(E_{i}') \\
&\geq m^t(F_{i+1}') + m^t(E_{i}) - C_{i} - (1+e^t) m^t((F_{i+1}')^{\leq a_{i}}) \\
&\geq m^t(F_{i+1}) - e^{-t} C_{i} + m^t(E_{i}) - C_{i} - (1+e^t) m^t((F_{i+1}')^{\leq a_{i}}) \\
&= m^t(F_{i+1}) + m^t(E_{i}) - (1+e^t)^2 e^{-t} C_{i} - (1+e^t) r_\epsilon m^t(F_{i+1}^{\leq a_{i}})
\end{align*}}
\begin{align*}
m^t(E_{i+1}) & \geq  m^t(F_{i+1}) + m^t(E_{i})
            - (1+e^t)^2 e^{-t} m^t(E_i^{>a_i}) +(1+e^t)(1+ r_\epsilon(t)) m^t(F_{i+1}^{\leq a_{i}}).
\end{align*}
This sets up an inductive argument bounding $m^t(E_i)$ below, starting from the fact that $m^t(E_1)=m^t(F_1)$. This inductive computation gives
\[
m^t(E_i) \geq \sum_{j=1}^i m^t(F_j) - (1+e^t)^2 e^{-t} M_i - (1+e^t) (1+r_\epsilon(t)) N_i,
\]
where
\[
M_i := \sum_{j=1}^{i-1} \frac{(1+r_\epsilon(-t))^{i-j}-1}{r_\epsilon(-t)} m^t(F_{j}^{>a_{j}-\epsilon}) \quad \text{ and } \quad N_i := \sum_{j=2}^i m^t(F_j^{\leq a_{j-1}}).
\]
So observing that the coefficients of the sum defining $M_n$ are all $\geq 1$, one can let
\[
C_{n,\epsilon,t} := (1+e^t) \max\left( (1+e^{-t}) \frac{(1+r_\epsilon(-t))^{n-1}-1}{r_\epsilon(-t)}, r_\epsilon(t) \right),
\]
and the right inequality of the lemma follows. The bound $m^t(E) \leq \sum_{i=1}^n m^t(F_i)$ follows from an iterated application of  \Cref{L:triangle_inequality}.
\end{proof}

In order to apply \Cref{T:filtration_inequality}, we will need a criterion for showing that the mass of $F_i$ outside of a given interval is small. For this, for any $t\in \bf{R}$ we let
\begin{equation} \label{E:cosh_expression} 
    c_{\sigma}^t(E) := \frac{1}{m(E)} \int \cosh t(\theta-\phi(E))\: \d m_{\sigma,E},  
\end{equation}
where $\phi(E)$ is the average phase of $E$ with respect to $\sigma$. The quantity $c^t_\sigma(E)$ reflects how concen\-trated the probability measure $\d m_E/m(E)$ is about its mean. Since $\cosh(\theta) \geq 1$ for all $\theta \in \bf{R}$, we have that $c^t_\sigma(E) \ge 1$ for any $E \in \mathfrak{d}$ with equality if and only if $E$ is $\sigma$-semistable.

\begin{lem}\label{L:cosh_bound_mass}
For a fixed pre-stability condition on $\cD$, $t>\lvert s\rvert>0$, and $\delta>0$, if $E \in \cD$ satisfies 
\begin{equation} \label{E:cosh_bound}
    \frac{c^{t}(E)}{m(E)} < \left( \frac{1}{2} + \sqrt{\frac{1}{4} + \epsilon \frac{\cosh(t\delta)}{\cosh(s\delta)}  - \epsilon} \right),
\end{equation}
for some $\epsilon>0$, then
\[
    m^s(E^{\leq \phi(E) - \delta}) + m^s(E^{\geq \phi(E)+\delta}) < \epsilon m(E) e^{s \phi(E)} \leq \epsilon m^s(E).
\]
\end{lem}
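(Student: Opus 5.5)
The plan is a Chebyshev/Markov-type tail estimate for the probability measure $\mu := \d m_{\sigma,E}/m(E)$, using $\cosh$ in place of a square. Write $\phi := \phi(E)$ for the average phase and $x := \theta - \phi$. Then $\int x\,\d\mu = 0$ and $c^t(E) = \int \cosh(tx)\,\d\mu$. The tail set is $T := \{\lvert x\rvert \ge \delta\}$. Up to the strictness of the cut-offs, which is harmless since $\mu$ is finitely supported, the quantity to bound is
\[
    \frac{m^s(E^{\le \phi-\delta}) + m^s(E^{\ge\phi+\delta})}{m(E)\,e^{s\phi}} = \int_T e^{sx}\,\d\mu .
\]
The final inequality $\epsilon\, m(E) e^{s\phi} \le \epsilon\, m^s(E)$ is exactly \Cref{P:firstproperties}(2) (Jensen), so only the first inequality needs proof. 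I read the hypothesis as a bound on the normalized quantity $c^t(E)$; the extra $1/m(E)$ in the display looks like a normalization typo. I will keep track of which normalization the argument actually uses.

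\textbf{Step 1 (comparison on the tail).} For $t > \lvert s\rvert$, the function $x \mapsto \cosh(tx)/\cosh(sx)$ is even and increasing in $\lvert x\rvert$. Hence on $T$ we have $\cosh(sx) \le K^{-1}\cosh(tx)$ with $K := \cosh(t\delta)/\cosh(s\delta) > 1$. Setting $\alpha := \int_T \cosh(sx)\,\d\mu$ and $\beta := \mu(T) \le \alpha$, I get
\[
    c^t(E) \ge \mu(T^c) + \int_T \cosh(tx)\,\d\mu \ge (1-\beta) + K\alpha .
\]

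\textbf{Step 2 (removing the odd part).} Write $e^{sx} = \cosh(sx) + \sinh(sx)$, so the target is $\alpha + \int_T \sinh(sx)\,\d\mu$, and the $\sinh$ term has to be controlled. Since $\lvert\sinh\rvert \le \cosh$, the crude bound $\int_T e^{sx}\,\d\mu \le 2\alpha$ is available, but it loses a factor of $2$. To do better I would use the centering $\int x\,\d\mu = 0$ together with Cauchy--Schwarz on $T$:
\[
    \Bigl(\int_T \lvert\sinh(sx)\rvert\,\d\mu\Bigr)^2 \le \beta \int_T \cosh^2(sx)\,\d\mu ,
\]
and then bound $\int_T\cosh^2(sx)\,\d\mu$ by $\int_T \cosh(tx)\,\d\mu$ where $2\lvert s\rvert \le t$, or by the Step 1 comparison otherwise. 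This should produce an inequality in which the tail quantity $p := \int_T e^{sx}\,\d\mu$ enters both linearly and quadratically. Roughly, I expect $c^t \ge 1 + (K-1)p$ after adjusting, leading to something of the shape $p^2 - p \le \epsilon(K-1)$-type, i.e.\ $p(p-1) \ge$ a quantity controlled by $c^t$. Solving the quadratic gives the threshold $\tfrac12 + \sqrt{\tfrac14 + \epsilon K - \epsilon}$, which is exactly the expression in \eqref{E:cosh_bound}. The contrapositive then says: if $c^t$ is below this root, then $p < \epsilon$.

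\textbf{Main obstacle.} The hard part is Step 2: arranging the estimates so that the constant comes out as precisely the root of $X^2 - X - \epsilon(K-1) = 0$, rather than a weaker linear bound such as $\alpha \le (c^t - 1)/(K-1)$. I would first try the linear Markov bound from Step 1 combined with $e^{sx} \le 2\cosh(sx)$, to check that the sign conventions and normalization are consistent with the hypothesis. After that I would refine to the quadratic using Cauchy--Schwarz, or the elementary inequality $ab \le \tfrac12(a^2+b^2)$, applied to the split $\mu = \mu|_T + \mu|_{T^c}$. If the exact constant resists, I would at least establish a version of the lemma with a constant of the same qualitative form, namely one that tends to $1$ as $\epsilon \to 0$ and grows with $K$. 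That weaker form is what the later application to \Cref{T:filtration_inequality} actually requires.
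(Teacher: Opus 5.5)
Your Step~1 is essentially the paper's comparison. Step~2, however, is not an argument (``should produce'', ``roughly, I expect''), and it cannot be completed. The factor $2$ you are worried about is really there: the inequality with constant $\epsilon$ is false, and the centering $\int x\,\d\mu=0$ does not rescue it.

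Take $m(E)=1$ (so the normalization issue in \eqref{E:cosh_bound} is moot) and $\phi(E)=0$. Let $E$ have two HN factors: mass $p$ at phase $\delta$, and mass $1-p$ at phase $-\eta$ with $\eta=p\delta/(1-p)$. For example, $E$ can be a direct sum of two semistable objects in $\DCoh(\pt)^{\oplus 2}$. Then the tail $\int_T e^{sx}\,\d\mu$ equals $pe^{s\delta}$, while $c^t(E)-1=p(\cosh t\delta-1)+O(p^2)$.

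Write $K=\cosh(t\delta)/\cosh(s\delta)$ as you do, and let $R$ be the right-hand side of \eqref{E:cosh_bound}. The hypothesis $c^t<R$ is equivalent to $c^t(c^t-1)<R(R-1)=\epsilon(K-1)$. Put $\epsilon=\lambda pe^{s\delta}$. Then the hypothesis holds for all small $p$ once $\cosh(t\delta)-1<\lambda e^{s\delta}(K-1)$, and this is true for $t\gg 0$ whenever $\lambda>e^{-s\delta}\cosh(s\delta)$. Since $e^{-s\delta}\cosh(s\delta)<1$, we may take $\lambda<1$, and then the tail equals $\epsilon/\lambda>\epsilon$.

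Concretely, $s=\delta=1$, $t=10$, $p=10^{-6}$, $\epsilon=0.9\,pe$ give $c^t(E)\approx 1.0110<R\approx 1.0172$, yet the tail is $pe>\epsilon$. Letting $\lambda\downarrow e^{-s\delta}\cosh(s\delta)$ shows that no constant smaller than $2$ works uniformly in $s\delta$.

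So the quadratic threshold does not encode a refined tail estimate for you to reconstruct via centering and Cauchy--Schwarz. In the paper it comes from a lossy chain. With $\alpha=\int_T\cosh(sx)\,\d\mu$ one has $c^s+(K-1)\alpha\le c^t<Rc^s$ (as $c^s\ge 1$). Hence $(K-1)\alpha<(R-1)c^s\le(R-1)c^t<R(R-1)=\epsilon(K-1)$, i.e.\ $\alpha<\epsilon$.

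The paper then disposes of the odd part by letting $a$ be the sum of the $s$- and $(-s)$-tails and asserting $c^s-a+Ka\le c^t$. But that sum equals $2\alpha$, so this step overcounts by precisely the factor you flagged. The argument actually proves only that each tail is $<2\epsilon$.

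Your own Step~1 gives slightly more. Since $\beta\le\alpha$, you get $c^t\ge 1+(K-1)\alpha$, hence $\alpha<(R-1)/(K-1)=\epsilon/R$. Then $e^{sx}\le 2\cosh(sx)$ gives a tail bound $<2\epsilon/R\le 2\epsilon$. This corrected statement, with $2\epsilon$ in place of $\epsilon$, is what is true and provable. It is harmless for the lemma's intended use as a smallness criterion when applying \Cref{T:filtration_inequality}. You should write it up rather than chase the sharp constant.
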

\begin{proof}
We will assume $s>0$, and show that the desired bound holds for both $s$ and $-s$. The function $\cosh(t \theta) / \cosh(s \theta)$ is even, $\geq 1$, and monotone increasing for $\theta > 0$. Therefore, the terms of $c_t(E)$ with $|\phi(E)-\theta| \geq \delta$ are bounded below by $\frac{\cosh(t \delta)}{\cosh(s\delta)} \cosh(s|\theta-\phi(E)|) m(\gr^\theta(E))$, and the other terms are bounded below by $\cosh(s|\theta-\phi(E)|) m(\gr^\theta(E))$. Let $E' := E^{\leq \phi(E) - \delta} \oplus E^{\geq \phi(E)+\delta}$, let $a := \frac{m_s(E')}{e^{s\phi(E)}} + \frac{m_{-s}(E')}{e^{-s\phi(E)}}$, and let $K$ denote the right-hand-side of \eqref{E:cosh_bound}. Then because $c_t(E)/c_s(E) \leq c_t(E)/m(E)$, we have
\[
c_s(E) - a + \frac{\cosh(t \delta)}{\cosh(s\delta)} a < c_t(E) < K c_s(E),
\]
which combined with the fact that $c_s(E) \leq c_t(E) < K m(E)$
\[
a < \frac{K(K-1)}{\frac{\cosh(t \delta)}{\cosh(s\delta)} - 1} m(E).
\]
Now $K$ was chosen precisely so that the right hand side of this inequality is $= \epsilon m(E)$. It follows that $a/m(E)<\epsilon$. But $a$ is a sum of two nonnegative numbers, so both terms $m^s(E')/ (m(E) e^{s\phi(E)})$ and $m^{-s}(E')/ (m(E) e^{-s\phi(E)})$  must satisfy this upper bound. The second inequality follows from Jensen's inequality \eqref{E:jensen}.
\end{proof}

\appendix

\section{Alternative proof of Lemma \ref{L:gt_inequality}}

Here is an alternative way to structure the proof of \Cref{L:gt_inequality}. We ultimately chose the proof in the main text because it involved less ``visual reasoning'', but we figured the following may also be of interest.

Let us first prove the claim when $\phi(w_i) > 0$ for all $i$ by induction on $m$. Note that condition (2) is automatic in this case, because $\Sigma w_m = \Sigma z_n$. Note that we may choose arbitrarily small perturbations of $z_\bullet$ and $w_\bullet$ that still satisfy the hypotheses of the lemma and such that $z_i \notin \bR w_j$ for any $i,j$. Because $g_t(z)$ is continuous on $\bC \setminus \bR_{\leq 0}$, for any fixed $m$ it suffices to prove the claim under this additional hypothesis. Also, by combining adjacent $w_i$ if necessary, we can assume that $\phi(w_i) > \phi(w_{i+1})$ for all $i$.

We will make use of \cite{ikeda_2021}*{Lem. 3.6}, which states that $g_t(u_1 + u_2) \leq g_t(u_1) + g_t(u_2)$ for any $u_1,u_2 \in \bH \cup \bR_{<0}$. However, if $u_1,u_2 \in \bC^*$ are any two points such that $\phi(u_2) \leq \phi(u_1) < \phi(u_2) + 1$, then we can multiply $u_1$ and $u_2$ by $e^{i\theta}$ for some $\theta \geq 0$ so that both lie in $\bH \cup \bR_{<0}$, and $g_t(u_1)$, $g_t(u_2)$, and $g_t(u_1+u_2)$ are all scaled by $e^{t\theta}$. We therefore have $g_t(u_1+u_2) \leq g_t(u_1) + g_t(u_2)$ in this more general setting, and we refer to this as the triangle inequality for $g_t$.

Let $\gamma(z_\bullet)$ be the piecewise-linear curve connecting $\Sigma z_0,\Sigma z_1, \Sigma z_2,\ldots$. The hypotheses of the lemma, along with the additional hypothesis that $z_i \notin \bR w_j$ for all $i,j$, implies that one of two possibilities occurs:
\begin{enumerate}[label=(\roman*)]
    \item the ray $\bR_{> 0} w_1$ intersects $\gamma(z_\bullet)$ at a unique interior point; or
    \item $m=1$, and $\bR_{> 0} w_1$ intersects $\gamma(z_\bullet)$ at the point $w_1 = \Sigma z_n$ and at most one other interior point.
\end{enumerate}
After subdividing if necessary, we can assume that the interior intersection point is $\Sigma z_r$ for some $r$. We will address each case separately:

\medskip
\noindent\textit{Case (ii):} This is the base case of our induction. Let $u_1=w_1$, and let $u_i = w_1-\Sigma z_{i-1}$ for $i=2,\ldots,n$,and $u_0 = w_1$. Then we can rewrite
\[
g_t(z_\bullet) - g_t(w_1) = \sum_{i=1}^{n-1} [g_t(z_i) + g_t(u_{i+1}) - g_t(u_i)].
\]
We observe in \Cref{fig:gt_inequality_base_case} that $g_t(u_i) \leq g_t(u_{i+1}) + g_t(z_i)$. Therefore $g_t(z_\bullet) - g_t(w_\bullet) \geq 0$ because each of the bracketed terms above is nonnegative.

\begin{figure}[h]
    \centering
\begin{tikzpicture}
\coordinate (v1) at (-1,0.5);
\coordinate (v2) at (1,-1.5);
\coordinate (v3) at (-2.5,-1);
\coordinate (v4) at (-4,0.5);
\coordinate (v5) at (-4,2.5);
\coordinate (v6) at (-2,3.5);
\coordinate (v7) at (0,3);
\coordinate (v8) at (1,2);
\coordinate (v9) at (0.5,1);

\draw[thick, orange] (v2) edge[->] node[auto] {$w_1$} (v1);
\draw[thick, gray] (v3) edge[->] node[auto] {$u_2$} (v1)
				(v4) edge[->] node[auto] {$u_3$} (v1)
				(v5) edge[->] node[auto] {$u_4$} (v1)
				(v6) edge[->] node[auto] {$u_5$} (v1)
				(v7) edge[->] node[auto] {$u_6$} (v1)
				(v8) edge[->] node[auto] {$u_7$} (v1);
\draw[thick, blue] (v2) edge[->] node[auto] {$z_1$} (v3)
				(v3) edge[->] node[auto] {$z_2$} (v4)
				(v4) edge[->] node[auto] {$z_3$} (v5)
				(v5) edge[->] node[auto] {$z_4$} (v6);
\draw[thick, purple] (v6) edge[->] node[auto] {$z_5$} (v7)
				(v7) edge[->] node[auto] {$z_6$} (v8)
				(v8) edge[->] node[auto] {$z_7$} (v9)
				(v9) edge[->] node[auto] {$z_8=u_8$} (v1);
\end{tikzpicture}
    \caption{Illustration of the base case for the induction in the proof of \Cref{L:gt_inequality}, when $m=1$. The fact that $\phi(u_{i+1})\leq \phi(z_i)< \phi(u_{i+1})+1$ and $u_i = u_{i+1} + z_i$ implies the triangle inequality $g_t(u_i) \leq g_t(u_{i+1})+g_t(z_i)$.}
    \label{fig:gt_inequality_base_case}
\end{figure}

\medskip
\noindent\textit{Case (i):} Observe that $g_t(\Sigma z_r-w_1)+g_t(w_1) = g_t(\Sigma z_r)$, because all three complex numbers lie on the same ray. Using this we have
\begin{equation}\label{E:decompose_gt}
\begin{array}{rl}
g_t(z_\bullet) - g_t(w_\bullet) = &\sum_{i=2}^r \left[ g_t(\Sigma z_{i-1},z_i) - g_t(\Sigma z_i) \right] \\
&+ \left[ g_t(\Sigma z_r - w_1, z_{r+1},\ldots,z_n) - g_t(w_2,\ldots,w_m)\right].
\end{array}
\end{equation}
The hypotheses of the lemma imply that $\phi(z_{i+1}) \leq \phi(\Sigma z_i) \leq \phi(z_{i+1})+1$ for $i=1,\ldots,r$, and that each square-bracketed expression also satisfies the hypotheses of the lemma. See \Cref{fig:triangle_inequality} for a visualization of this division.

\begin{figure}[h]
    \centering
    \begin{tikzpicture}
        \coordinate (v1) at (0,0);
        \coordinate (v6) at (0.5,2);
        \coordinate (v2) at (-1.5,4) {};
        \coordinate (v3) at (0,5.25);
        \coordinate (v4) at (2.5,4.75);
        \coordinate (v5) at (3,4);
        \coordinate (v8) at (1.25,5);
        \coordinate (v7) at (2.5,3) {};
        \coordinate (v10) at  (-2,1);
        \coordinate  (v11) at (1.25,2.75);
         \draw[thick, blue] (v1) edge[->] node[anchor=north] {$z_1$} (v10) 
         				 (v10) edge[->] node[anchor=east] {$z_2$} (v2)
         				 (v2) edge[->] node[anchor=east] {$z_3$} (v3);
         \draw[thick, purple] (v3) edge[->] node[anchor=south] {$z_4$} (v8)
         				 (v8) edge[->] node[anchor=south] {$z_5$} (v4)
         				 (v4) edge[->] node[anchor=west] {$z_6$} (v5)
         				 (v5) edge[->] node[anchor=west] {$z_7$} (v7);
         \draw[thick, orange] (v1) edge[->] node[anchor=west] {$w_1$} (v6)
					(v6) edge[->] node[anchor=north west] {$w_2$} (v11)         
         				(v11) edge[->] node[anchor=north] {$w_3$} (v7);
         \fill[black] (v1) circle (1pt);
        \draw[thick, dashed, gray] (v6) edge[->] node[anchor=east] {$\Sigma z_4 -w_1$} (v8);
        \node[anchor=north] at (v1) {$0$};
        \node[anchor=north west] at (v8) {$\Sigma z_4$};
    \end{tikzpicture}

    \begin{tikzpicture}
        \coordinate (v6) at (1.5,2);
        \coordinate (v4) at (3.5,4.75);
        \coordinate (v5) at (4,4);
        \coordinate (v7) at (3.5,3) {};
        \coordinate (v8) at (2.25,5);
        \coordinate (v11) at (2.25,2.75);
        \draw[thick, blue] (v6) edge[->] node[fill=white] {$\Sigma z_4 - w_1$} (v8);
        \draw[thick, purple] (v8) edge[->] node[auto] {$z_5$} (v4)
         		(v4) edge[->] node[auto] {$z_6$} (v5)
         		(v5) edge[->] node[auto] {$z_7$} (v7);
        \draw[thick, orange] (v6) edge[->] node[anchor=north west] {$w_2$} (v11)
        		(v11) edge[->] node[anchor=north] {$w_3$} (v7);
        
        \coordinate (av1) at (-2,0);
        \coordinate (av10) at (-4,1);
        \coordinate (av2) at (-3.5,4);
        \draw[thick, blue] (av1) edge[->] node[auto] {$z_1$} (av10) 
         		(av10) edge[->] node[auto] {$z_2$} (av2);
        \draw[thick, orange] (av1) edge[->] node[fill=white] {$\Sigma z_2$} (av2);
        
        \coordinate (bv1) at (-1,0);
        \coordinate (bv2) at (-2.5,4);
        \coordinate (bv3) at (-1,5.25);
        \draw[thick, blue] (bv1) edge[->] node[fill=white] {$\Sigma z_2$} (bv2)
         		(bv2) edge[->] node[auto] {$z_3$} (bv3);
        \draw[thick, orange] (bv1) edge[->] node[fill=white] {$\Sigma z_3$} (bv3);
        
        \coordinate (cv1) at (0,0);
        \coordinate (cv3) at (0,5.25);
        \coordinate (cv8) at (1.25,5);
        \draw[thick, blue] (cv1) edge[->] node[near end,fill=white] {$\Sigma z_3$} (cv3);
        \draw[thick, purple] (cv3) edge[->] node[auto] {$z_4$} (cv8);
        \draw[thick, orange] (cv1) edge[->] node[fill=white] {$\Sigma z_4$} (cv8);
    \end{tikzpicture}

    \caption{Illustration of the inductive argument used in the proof of \Cref{L:triangle_inequality}. The curve $(z_1,\ldots,z_7)$ begins with a section on which $\Im(z)$ is monotone increasing (blue), followed by a section on which $\Im(z)$ is monotone decreasing (purple), whereas $\Im(z)$ is monotone increasing on the curve $(w_\bullet)$ (orange). This will always be the case under the hypotheses of \Cref{L:gt_inequality}. Note that a vertex $\Sigma z_4$ has been added where the ray $\bR_{\geq 0} w_1$ hits the top curve. The calculation of $g_t(z_\bullet)-g_t(w_\bullet)$ for the top diagram adds the value of $g_t$ for the blue and purple arrows and subtracts the value for the orange arrows. The bottom diagram shows how this sum is decomposed as a sum of smaller expressions of the same form.}
    \label{fig:triangle_inequality}
\end{figure}

The inductive hypothesis implies that $g_t(\Sigma z_r - w_1, z_{r+1},\ldots,z_n) - g_t(w_2,\ldots,w_m) \geq 0$. Also, the bounds $\phi(\Sigma z_{i-1}) \leq \phi(z_i) < \phi(\Sigma z_{i-1}) + 1$ imply that $g_t(\Sigma z_i) := g_t(z_i + \Sigma z_{i-1}) \leq g_t(z_i)+g_t(\Sigma z_{i-1})$ for $i=2,\ldots,r$. We conclude that every bracketed term on the right-hand-side of \eqref{E:decompose_gt} is nonnegative, and $g_t(z_1,\ldots,z_n) - g_t(w_1,\ldots,w_m) \geq 0$ as desired.

In the setting where $\phi(w_i) \leq 0$ for all $i$, the proof of the lemma is completely analogous. For the base case of the induction, when $m=1$, we use the decomposition
\[
g_t(z_\bullet)-g_t(w_1) = \sum_{i=2}^n [g_t(\Sigma z_{i-1}) + g_t(z_i) - g_t(\Sigma z_i)].
\]
Under the hypotheses (2), one has $\phi(z_i) \leq \phi(\Sigma z_{i-1}) < \phi(z_i)+1$ for all $i$, so we conclude that every square bracketed expression is nonnegative.

For the inductive step, if $m>1$, then the ray $\Sigma z_n - \bR_{>0} w_m$ must intersect the curve $\gamma(z_\bullet)$ at a unique interior point, which we can assume is of the form $\Sigma z_r$ for some $r$. Then if we let $u_i = \Sigma z_n - \Sigma z_i$, we can write
\begin{align*}
g_t(z_\bullet)-g_t(w_\bullet) &= [g_t(z_1,\ldots,z_{r-1},u_r)-g_t(w_1,\ldots,w_{m-1})] 
+ \sum_{i=r}^{n-2} [g_t(u_{i+1},z_{i+1}) - g_t(u_i)]
\end{align*}
Once again, the geometry of the curve $\gamma(z_\bullet)$ guarantees that $\phi(u_{i+1}) \leq \phi(z_{i+1}) < \phi(u_{i+1})+1$ for $i=r,\ldots,n-2$. So, the inductive hypothesis implies the first bracketed terms are nonnegative, and the triangle inequality for $g_t$ implies that the remaining terms on nonnegative.

We can now prove the general statement: The ray $\Sigma w_s + \bR_{>0} \cdot w_s$ intersects the curve $\gamma(z_\bullet)$ in a single point, and by subdividing we can assume that it is $\Sigma z_p$ for some $p$. Then we write
\begin{align*}
g_t(z_\bullet) - g_t(w_\bullet) &= [g_t(z_1,\ldots,z_p)-g_t(w_1,\ldots,w_s,\Sigma z_p - \Sigma w_s)] \\
& + [g_t(\Sigma z_p - \Sigma w_s,z_{p+1},\ldots,z_n) - g_t(w_{s+1},\ldots,w_m)]
\end{align*}
Both bracketed terms above satisfy the hypotheses of the lemma. The first term has $\phi(w_i)>0$ for all $i$, and the second term has $\phi(w_i) \leq 0$ for all $i$, so we have shown that both bracketed terms are nonnegative.

\printendnotes

\bibliography{refs}{}

@article {Br07,
    AUTHOR = {Bridgeland, Tom},
     TITLE = {Stability conditions on triangulated categories},
   JOURNAL = {Ann. of Math. (2)},
  FJOURNAL = {Annals of Mathematics. Second Series},
    VOLUME = {166},
      YEAR = {2007},
    NUMBER = {2},
     PAGES = {317--345},
      ISSN = {0003-486X},
   MRCLASS = {14F05 (18E30)},
  MRNUMBER = {2373143},
MRREVIEWER = {Leovigildo M. Alonso Tarrio},
       DOI = {10.4007/annals.2007.166.317},
       URL = {https://doi.org/10.4007/annals.2007.166.317},
}

@misc{stacks-project,
    shorthand    = {Stacks},
    author       = {The {Stacks Project Authors}},
    title        = {\textit{Stacks Project}},
    howpublished = {\url{https://stacks.math.columbia.edu}},
    year         = {2018},
  }

@article{GKRpaper04,
doi = {10.1070/IM2004v068n04ABEH000497},
url = {https://dx.doi.org/10.1070/IM2004v068n04ABEH000497},
year = {2004},
month = {aug},
publisher = {},
volume = {68},
number = {4},
pages = {749},
author = {A L Gorodentsev and  S A Kuleshov and  A N Rudakov},
title = {t-stabilities and t-structures on triangulated categories},
journal = {Izvestiya: Mathematics}
}

@misc{KS08,
  
  url = {https://arxiv.org/abs/0811.2435},
  
  author = {Kontsevich, Maxim and Soibelman, Yan},
  
  title = {Stability structures, motivic {D}onaldson-{T}homas invariants and cluster transformations},
  
  publisher = {arXiv},
  
  year = {2008},
  
  copyright = {arXiv.org perpetual, non-exclusive license}
}

@article{BridgelandK3,
 author = {Bridgeland, Tom},
 title = {Stability conditions on {K}3 surfaces},
 fjournal = {Duke Mathematical Journal},
 journal = {Duke Math. J.},
 issn = {0012-7094},
 volume = {141},
 number = {2},
 pages = {241--291},
 year = {2008},
 language = {English},
 doi = {10.1215/S0012-7094-08-14122-5},
 zbMATH = {5237682},
 Zbl = {1138.14022}
}

@article{Bayer_short,
	Author = {Bayer, Arend},
	Da = {2019/12/01},
	Doi = {10.1007/s00208-019-01900-w},
	Id = {Bayer2019},
	Isbn = {1432-1807},
	Journal = {Mathematische Annalen},
	Number = {3},
	Pages = {1597--1613},
	Title = {A short proof of the deformation property of {B}ridgeland stability conditions},
	Ty = {JOUR},
	Url = {https://doi.org/10.1007/s00208-019-01900-w},
	Volume = {375},
	Year = {2019}
}

@article{ikeda_2021, 
title={Mass growth of objects and categorical entropy}, 
volume={244}, 
DOI={10.1017/nmj.2020.9}, 
journal={Nagoya Mathematical Journal}, 
publisher={Cambridge University Press}, 
author={Ikeda, Akishi}, year={2021}, 
pages={136–157}
}

@article{quasiconvergence,
      title={Quasi-convergence of stability conditions}, 
      author={Daniel Halpern-Leistner and Jeffrey Jiang and Antonios-Alexandros Robotis},
      year={2026},
    journal = {Selecta Mathematica, to appear},
}

@incollection{DynamicsDHKK,
 author = {Dimitrov, George and Haiden, Fabian and Katzarkov, Ludmil and Kontsevich, Maxim},
 title = {Dynamical systems and categories},
 booktitle = {The influence of {S}olomon {L}efschetz in geometry and topology},
 isbn = {978-0-8218-9494-1},
 pages = {133--170},
 year = {2014},
 publisher = {Providence, RI: American Mathematical Society (AMS)},
 language = {English},
 zbMATH = {6520455},
 Zbl = {1348.18017}
}

@misc{laxstability,
      title={Partial compactification of stability manifolds by massless semistable objects}, 
      author={Nathan Broomhead and David Pauksztello and David Ploog and Jon Woolf},
      year={2022},
      eprint={2208.03173},
      archivePrefix={arXiv},
      primaryClass={math.RT},
      url={https://arxiv.org/abs/2208.03173}, 
}

@misc{ThurstonBDL,
      title={A {T}hurston compactification of the space of stability conditions}, 
      author={Asilata Bapat and Anand Deopurkar and Anthony M. Licata},
      year={2023},
      eprint={2011.07908},
      archivePrefix={arXiv},
      primaryClass={math.RT},
      url={https://arxiv.org/abs/2011.07908}, 
}

@article{ThurstonKKO,
   title={Thurston compactifications of spaces of stability conditions on curves},
   ISSN={1088-6850},
   url={http://dx.doi.org/10.1090/tran/9104},
   DOI={10.1090/tran/9104},
   journal={Transactions of the American Mathematical Society},
   publisher={American Mathematical Society (AMS)},
   author={Kikuta, Kohei and Koseki, Naoki and Ouchi, Genki},
   year={2024},
   month=feb 
}

@misc{DeopurkarThurstonk3,
      title={The Thurston compactification of the stability manifold of a generic analytic {K}3 surface}, 
      author={Anand Deopurkar},
      year={2025},
      eprint={2505.14991},
      archivePrefix={arXiv},
      primaryClass={math.AG},
      url={https://arxiv.org/abs/2505.14991}, 
}

@misc{Bolognesecompactification,
      title={A local compactification of the {B}ridgeland stability manifold}, 
      author={Barbara Bolognese},
      year={2020},
      eprint={2006.04189},
      archivePrefix={arXiv},
      primaryClass={math.AG},
      url={https://arxiv.org/abs/2006.04189}, 
}

@misc{augmented,
      title={The space of augmented stability conditions}, 
      author={Daniel Halpern-Leistner and Antonios-Alexandros Robotis},
      year={2025},
      eprint={2501.00710},
      archivePrefix={arXiv},
      primaryClass={math.AG},
      url={https://arxiv.org/abs/2501.00710}, 
}

@inproceedings{SouriauGroupes,
 author = {Souriau, J.-M.},
 title = {Groupes diff{\'e}rentiels de physique math{\'e}matique},
 year = {1984},
 language = {English},
 howpublished = {Feuilletages et quantification g{\'e}om{\'e}trique, {Journ}. lyonnaises {Soc}. math. {France} 1983, {S{\'e}min}. sud-rhodanien {G{\'e}om}. {II}, 73-119 (1984).},
 zbMATH = {3860012},
 Zbl = {0541.58002}
}

@article{BPPDiscrete,
 author = {Broomhead, Nathan and Pauksztello, David and Ploog, David},
 title = {Discrete triangulated categories},
 fjournal = {Bulletin of the London Mathematical Society},
 journal = {Bull. Lond. Math. Soc.},
 issn = {0024-6093},
 volume = {50},
 number = {1},
 pages = {174--188},
 year = {2018},
 language = {English},
 doi = {10.1112/blms.12125},
 zbMATH = {6846530},
 Zbl = {1391.18016}
}
\bibliographystyle{plain}

\end{document}